 \numberwithin{equation}{section}
\newtheorem{df}{Definition}[section]
\newtheorem{thm}[df]{Theorem}
\newtheorem{lem}[df]{Lemma}
\newtheorem{rmk}[df]{Remark}
\newcommand{\R}{\mathbb{R}}
\newcommand{\C}{\mathbb{C}}
\newcommand{\N}{\mathbb{N}}
\newcommand{\D}{\mathbb{D}}
\newcommand{\E}{\mathbb{E}}
\newcommand{\cO}{\mathscr{O}}
\newcommand{\bb}{\boldsymbol{b}}
\newcommand{\bL}{\boldsymbol{L}}
\newcommand*{\LargerCdot}{\raisebox{-0.25ex}
{\scalebox{1.5}{$\cdot$}}}
\newcommand{\cqfd}{ \hfill $\square$ }
\newcommand{\comment}[1]{}
\title{Quotient of Bergman kernels on punctured Riemann surfaces}
\author{\normalsize\textsc{Hugues AUVRAY\footnote{
Laboratoire de Math\'{e}matiques d'Orsay, Universit\'{e} Paris-Sud, 
CNRS, Universit\'{e} Paris-Saclay, D\'epartement de Math\'ematiques, 
B\^atiment 307, 91405 Orsay, France. 
E-mail: hugues.auvray@math.u-psud.fr}
\,\,and 
Xiaonan MA\footnote{Universit\'e de Paris, CNRS,
Institut de Math\'ematiques de Jussieu-Paris Rive Gauche,
F-75013 Paris, France. 
E-mail: xiaonan.ma@imj-prg.fr} 
\,\,and George MARINESCU\footnote{
Universit{\"a}t zu K{\"o}ln,  
Mathematisches Institut, Weyertal 86-90, 50931 K{\"o}ln, Germany. 
E-mail: gmarines@math.uni-koeln.de}}}
\date{}
\def\blfootnote{\xdef\@thefnmark{}\@footnotetext}
\begin{document} 
\blfootnote{
First author is partially supported by ANR contract 
ANR-14-CE25-0010. 
Second author is partially supported by NNSFC No.\ 11829102
and funded through the Institutional Strategy of the University of Cologne
within the German Excellence Initiative. 
Third author is partially supported by CRC TRR 191.
            \vspace{4pt}
            \footnoterule
            \vspace{1pt}}
\makeatletter
\renewcommand\section{\@startsection {section}{1}{\z@}%
                                   {-3.5ex \@plus -1ex \@minus -.2ex}%
                                   {2.3ex \@plus.2ex}%
                                   {\centering\sc\normalsize}}
\renewcommand\subsection{\@startsection{subsection}{2}{\z@}%
                                     {-3.25ex\@plus -1ex \@minus -.2ex}%
                                     {1.5ex \@plus .2ex}%
                                     {\normalsize\sf}}
\renewcommand\subsubsection{\@startsection{subsubsection}{3}
{\z@}%
                                     {-3.25ex\@plus -1ex \@minus -.2ex}%
                                     {1.5ex \@plus .2ex}%
                                     {\normalsize\it}}

\makeatother
 
\maketitle
\abstract{In this paper we consider a punctured Riemann surface 
endowed with a Hermitian metric that equals the Poincar\'e metric near
the punctures, and a holomorphic line bundle that polarizes the metric.
We show that the quotient of the Bergman kernel of high tensor powers
of the line bundle and of the Bergman kernel of the Poincar\'e model near
the singularity tends to one up to arbitrary negative powers of 
the tensor power.
}

\selectlanguage{english}

\tableofcontents

\section{Introduction}

In this paper we study the asymptotics of Bergman kernels of high
tensor powers of a singular Hermitian 
line bundle over a Riemann surface under the assumption 
that the curvature has singularities of Poincar\'e type at a finite set. 
We show namely that the \emph{quotient} of these Bergman kernels
and of the Bergman kernel of the Poincar\'e model near
the singularity tends to one up to arbitrary negative powers of 
the tensor power.
In our previous paper \cite{bkp} (see also \cite{bkp0}) we obtained
a weighted estimate in the $C^m$-norm near the punctures for the 
\emph{difference} of the global Bergman kernel
and of the Bergman kernel of the Poincar\'e model near
the singularity, uniformly in the tensor powers of the given bundle.
Our method is inspired by the analytic localization technique of
Bismut-Lebeau \cite{BL}.

There exists a well-known expansion of the Bergman kernel on 
general compact manifolds \cite{bou, Ca99, DLM06, Hs10, mm, MM08,Ti90, Z98}
with important applications to the existence and uniqueness
of constant scalar curvature K\"ahler metrics \cite{Don,Ti90}
as part of the Tian-Yau-Donaldson's program.
Coming to our context,  a central problem is 
the relation between the existence of special complete/singular metrics
and the stability of the pair $(X, D)$ where $D$ is
a smooth divisor of a compact K\"{a}hler manifold $X$; 
see e.g.\ the suggestions of \cite[\S 3.1.2]{sze} for the case of 
\og asymptotically hyperbolic K\"ahler metrics\fg, 
which naturally generalize to higher dimensions the complete metrics 
$\omega_{\Sigma}$ studied here. 
Moreover, the technique developed 
here can be extended to the higher dimensional situation
in the case of Poincar\'e type K\"ahler metrics 
with reasonably fine asymptotics 
on complement of divisors, 
see the construction of \cite[\S1.1]{auv} and \cite[Theorem 4]{auv2}.

The Bergman kernel function of a singular polarization
is of particular interest in arithmetic situations \cite{BBK07, BKK05, bf}.
In \cite{bkp} we applied the precise asymptotics
of the Bergman kernel near the punctures in order to obtain optimal 
uniform estimates for the supremum of the Bergman kernel, relevant in 
arithmetic geometry \cite{AbUll95,jk04,fjk}. There are also applications to
\og partial Bergman kernels\fg, see \cite{CM15}.

  
We place ourselves in the setting of \cite{bkp}
which we describe now. 
Let $\overline\Sigma$ be a compact Riemann surface and let
$D=\{a_1,\ldots,a_N\}\subset\overline\Sigma$ be a finite set.
We consider the punctured Riemann surface 
$\Sigma = \overline{\Sigma}\smallsetminus D$ and a Hermitian form
$\omega_{\Sigma}$ on $\Sigma$.
Let $L$ be a holomorphic line bundle on $\overline{\Sigma}$, 
and let $h$ be a singular Hermitian metric on $L$ such that:
   \begin{itemize}
    \item[($\alpha$)] $h$ is smooth over $\Sigma$, 
and for all $j=1,\ldots,N$, there is a trivialization of $L$
 in the complex neighborhood $\overline{V_j}$ of $a_j$ in 
 $\overline{\Sigma}$, with associated coordinate $z_j$
 such that $|1|_{h}^2(z_{j})= \big|\!\log(|z_j|^2)\big|$.
    \item[($\beta$)] There exists $\varepsilon>0$ such that the
(smooth) curvature $R^L$ of $h$ satisfies
$iR^L\geq\varepsilon\omega_{\Sigma}$ over $\Sigma$ 
and moreover, $iR^L=\omega_{\Sigma}$ 
on $V_j:=\overline{V_j}\smallsetminus\{a_j\}$;
in particular, $\omega_{\Sigma} = \omega_{\D^*}$ 
in the local coordinate $z_j$ on $V_j$ 
and $(\Sigma, \omega_{\Sigma})$ is complete. 
\end{itemize}
Here $\omega_{\D^*}$ denotes the Poincar\'{e} metric on the 
punctured unit disc $\D^*$, normalized as follows:
\begin{equation}   \label{eqn_omegaPcr}
\omega_{\D^*} := \frac{idz\wedge d\overline{z}}
{|z|^2\log^2(|z|^2)}\,\cdot
\end{equation}
For $p\geq1$, let $h^p:=h^{\otimes p}$ be the metric induced 
 by $h$ on $L^p\vert_{\Sigma}$, 
where $L^p:=L^{\otimes p}$. We denote by 
$H^0_{(2)}(\Sigma,L^p)$ 
the space of ${\bL}^2$-holomorphic sections of $L^p$ 
relative to the metrics $h^p$ and $\omega_\Sigma$, 
\begin{equation}\label{e:bs}
H^0_{(2)}(\Sigma,L^p)=\left\{S\in H^0(\Sigma,L^p):\,
\|S\|_{{\bL}^2}^2:=\int_{\Sigma}|S|^2_{h^p}\,
\omega_\Sigma<\infty\right\},
\end{equation}
endowed with the obvious inner product. The sections from 
$H^0_{(2)}(\Sigma,L^p)$
extend to holomorphic sections of $L^p$ over $\overline\Sigma$, 
i.\,e.,  (see \cite[(6.2.17)]{mm})
\begin{equation}\label{e:bs1}
H^0_{(2)}(\Sigma,L^p)\subset 
H^0\big(\overline\Sigma,L^p\big).
\end{equation}
In particular, the dimension $d_p$ of $H^0_{(2)}(\Sigma,L^p)$ is finite.
 
We denote by $B_p(\LargerCdot,\LargerCdot)$ and 
by $B_p(\LargerCdot)$ 
the (Schwartz-)Bergman kernel and the Bergman kernel function
of the orthogonal projection $B_{p}$ from the space 
of $\bL^{2}$-sections of $L^{p}$ over $\Sigma$ onto
$H^0_{(2)}(\Sigma,L^p)$. They are defined as follows: 
if $\{S_\ell^p\}_{\ell=1}^{d_p}$ is an orthonormal 
basis of $H^0_{(2)}(\Sigma,L^p)$, then 
\begin{equation}\label{e:BFS1}
B_p(x,y):=\sum_{\ell=1}^{d_p}S^p_\ell(x)\otimes(S^p_\ell(y))^{*} 
\quad\text{and}\quad 
B_p(x):=\sum_{\ell=1}^{d_p}|S^p_\ell(x)|_{h^p}^2\,.
\end{equation} 
Note that these are independent of the choice of basis (see 
\cite[(6.1.10)]{mm} or \cite[Lemma 3.1]{CM11}). 
Similarly, let $B_p^{\D^*}(x,y)$ and $B_p^{\D^*}(x)$ be 
the Bergman kernel and Bergman kernel function of
$\big(\D^*, \omega_{\D^*}, 
\C,\big|\!\log(|z|^2)\big|^p\, h_{0})$ 
with $h_{0}$ the flat Hermitian metric on the trivial line bundle 
$\C$.
 
Note that for $k\in \N$, the $C^{k}$-norm at $x\in \Sigma$
is defined for $\sigma\in C^\infty(\Sigma, L^p)$ as 
\begin{equation}   \label{eq:2.13c}\begin{split}
 &|\sigma |_{C^k(h^p)}(x)= \big(  |\sigma|_{h^p}
 +\big|\nabla^{p,\Sigma}\sigma
\big|_{h^p,\omega_{\Sigma}}+\ldots+\big|(\nabla^{p,\Sigma})^k
\sigma\big|_{h^p,\omega_{\Sigma}}\big)(x),
\end{split}  \end{equation}
where $\nabla^{p,\Sigma}$ is the connection on
$(T\Sigma)^{\otimes\ell}\otimes L^p$ induced by the Levi-Civita 
connection on $(T\Sigma, \omega_{\Sigma})$ and the Chern connection
on $(L^{p},h^p)$,
and the pointwise norm $|\,\LargerCdot\,|_{h^p,\omega_{\Sigma}}$
is induced by $\omega_{\Sigma}$ and $h^{p}$. 
In the same way 
we define the $C^{k}$-norm $|f|_{C^{k}}(x)$ at $x\in \Sigma$ 
of a smooth function $f\in{C}^\infty(\Sigma,\C)$ by using 
the Levi-Civita connection on $(T\Sigma, \omega_{\Sigma})$.

We fix a point $a\in D$ and work 
in coordinates centered at $a$. 
Let $\mathfrak{e}_{L}$ be the holomorphic frame of $L$ near $a$ 
corresponding to the trivialization in the condition ($\alpha$).
By assumptions ($\alpha$) and ($\beta$) 
we have the following identification of the geometric data
in the coordinate $z$ on 
the punctured disc $\D^*_{4r}$ of radius $4r$ centered at $a$,
via the trivialization $\mathfrak{e}_{L}$ of $L$,
\begin{align}\label{eq:1.6a}
\big(\Sigma,\omega_{\Sigma}, L,h\big)\big|_{\D^*_{4r}}
=\big(\D^*,\omega_{\D^*}, \C, h_{\D^*}
= \big|\!\log(|z|^2)\big|\cdot h_{0}\big)\big|_{\D^*_{4r}}\,, 
\quad \text{ with } 0<r<(4e)^{-1}.
\end{align}
In \cite[Theorem 1.2]{bkp} we proved the following 
weighted diagonal expansion of the Bergman kernel:
\begin{thm}   \label{thm_MainThm}
Assume that $(\Sigma, \omega_{\Sigma}, L, h)$ fulfill conditions
($\alpha$) and ($\beta$). 
Then the following estimate holds: 
for any $\ell, k\in \N$, and every $\delta>0$, 
there exists $C=C(\ell, k, \delta)>0$ such that
for all $p\in\N^*$, and $z\in V_1\cup\cdots\cup V_N$
with the local coordinate $z_{j}$,
   \begin{equation}    \label{eqn_MainThm}
     \Big | B_p - B_p^{\D^*}\Big |_{C^k} (z_{j})\leq Cp^{-\ell}
     \, \big|\!\log(|z_{j}|^2)\big|^{-\delta},    
   \end{equation}
with norms computed with help of $\omega_{\Sigma}$ 
and the associated Levi-Civita connection on $\D_{4r}^*$.
    \end{thm}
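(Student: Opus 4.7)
The plan is to follow the Bismut--Lebeau analytic localization technique adapted to the singular, non-compact setting near the punctures. The crucial observation is that near each $a_{j}\in D$ the data $(\Sigma,\omega_{\Sigma},L,h)$ coincides biholomorphically, by \eqref{eq:1.6a}, with the Poincar\'e model $(\D^{*},\omega_{\D^{*}},\C,h_{\D^{*}})$. I would therefore compare $B_{p}$ with $B_{p}^{\D^{*}}$ by realizing both as contour integrals of the resolvent of the Kodaira Laplacian $\Box_{p}$ acting on $\bL^{2}(L^{p})$, and by transferring eigensections between the two models through cut-offs supported in the region $\D_{4r}^{*}$ of exact identification.

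\textbf{Spectral gap and resolvent representation.} The first step is to establish a uniform spectral gap
\[
\operatorname{Spec}\bigl(\Box_{p}\bigr)\subset\{0\}\cup[c_{0}p,+\infty)
\]
on both $\Sigma$ and $\D^{*}$. On $(0,1)$-forms this follows from the Bochner--Kodaira--Nakano formula combined with the curvature hypothesis $iR^{L}\ge\vareps\,\omega_{\Sigma}$; completeness of $(\Sigma,\omega_{\Sigma})$ ensures essential self-adjointness of $\Box_{p}$. Given the gap, $B_{p}=\mathbf{1}_{\{0\}}(\Box_{p})$ admits a contour-integral representation around $0$, and the same holds for $B_{p}^{\D^{*}}$. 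The spectral gap also yields, for every $N$, a uniform bound $\|\Box_{p}^{-N}\|_{\bL^{2}\to\bL^{2}}\le C_{N}p^{-N}$ on $(\ker\Box_{p})^{\perp}$, which is how the polynomial gain $p^{-\ell}$ will eventually be extracted.

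\textbf{Cut-off comparison and weighted decay.} Fix nested cut-offs $\rho\prec\tilde\rho$ supported in $\D_{4r}^{*}$, with $\rho\equiv 1$ on $\D_{r}^{*}$ and $d\tilde\rho$ supported in an annulus $\{r'\le|z|\le 4r\}$ at bounded Poincar\'e distance from $a_{j}$. Multiplying a $B_{p}^{\D^{*}}$-section by $\tilde\rho$ transfers it to $\Sigma$; its failure to be holomorphic, encoded in the commutator $[\Box_{p},\tilde\rho]$, is supported in that annulus and is corrected by $\Box_{p}^{-1}$ acting on $(\ker\Box_{p})^{\perp}$, producing an element of $H^{0}_{(2)}(\Sigma,L^{p})$. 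Doing the same in the opposite direction and comparing the two comparison maps expresses $B_{p}(z)-B_{p}^{\D^{*}}(z)$, for $z$ close to $a_{j}$, as an iterated integral over the annulus against resolvents. The polynomial gain $p^{-\ell}$ comes from inserting arbitrarily many powers of $\Box_{p}^{-1}$; the weight $|\!\log(|z|^{2})|^{-\delta}$ comes from the off-diagonal decay of the model kernel $B_{p}^{\D^{*}}(z,w)$ between a point $z$ near the puncture and a point $w$ at bounded Poincar\'e distance, a fact which is readable from the explicit orthonormal basis of $H^{0}_{(2)}(\D^{*},h_{\D^{*}}^{p})$ by monomials with Bergman weights. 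A Sobolev embedding in the Poincar\'e metric then upgrades the $\bL^{2}$ estimate to the stated $C^{k}$ bound \eqref{eqn_MainThm}.

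\textbf{Main obstacle.} The principal difficulty is the \emph{joint} extraction of the polynomial factor $p^{-\ell}$ \emph{and} of the logarithmic weight $|\!\log(|z|^{2})|^{-\delta}$ from the commutator terms paired through the resolvent. A crude $\bL^{2}$ bound produces $p^{-\ell}$ but loses the weight, while arguments that keep the weight typically lose in $p$. To reconcile the two one has to establish sharp off-diagonal decay of $B_{p}^{\D^{*}}(z,w)$, uniform in $p$, with explicit dependence on $|\!\log(|z|^{2})|$, and to propagate this decay through the contour-integral representation of $B_{p}$; this is the technical heart of the argument and the place where the Fuchsian structure of $\omega_{\D^{*}}$ is exploited in full.
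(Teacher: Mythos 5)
This theorem is not proved in the present paper: it is quoted verbatim from \cite[Theorem 1.2]{bkp}, and the text only records the fact (``In \cite[Theorem 1.2]{bkp} we proved \dots''). There is therefore no in-paper proof to compare against line by line, but the method of \cite{bkp} is described in the introduction and repeatedly invoked later (notably \cite[Theorem 1.1, Cor.\ 5.2, Cor.\ 6.1, (6.23)]{bkp}), so a comparison at the level of structure is possible.

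Your roadmap is the right one and does coincide with that of \cite{bkp}: exact identification of the geometry with the Poincar\'e model on $\D_{4r}^{*}$ via \eqref{eq:1.6a}, a uniform spectral gap $\operatorname{Spec}(\square_{p})\subset\{0\}\cup[c_{0}p,\infty)$ for the Kodaira Laplacian, analytic localization \`a la Bismut--Lebeau using nested cut-offs whose differentials live in a fixed annulus, transfer of sections between the two models corrected by the resolvent on $(\ker\square_{p})^{\perp}$, and finally weighted off-diagonal decay of the explicit model kernel $B_{p}^{\D^{*}}(z,w)$ to produce the logarithmic weight, upgraded to $C^{k}$ by Sobolev in the Poincar\'e metric. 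These are exactly the ingredients the present paper pulls from \cite{bkp} (the spectral gap is \cite[Cor.\ 5.2]{bkp}, the weighted off-diagonal estimate is \cite[Theorem 1.1/(6.23)]{bkp}, the rough off-diagonal decay is \cite[Cor.\ 6.1]{bkp}, and the interior expansion is \cite[Theorem 6.1.1]{mm}).

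However, as a proof your submission is incomplete, and you say so yourself: the step you label ``the technical heart of the argument,'' namely the simultaneous extraction of the factor $p^{-\ell}$ \emph{and} of the weight $|\!\log(|z|^{2})|^{-\delta}$ from the commutator terms propagated through the resolvent, is not carried out. That step is precisely what makes \eqref{eqn_MainThm} a theorem rather than a plausible expectation, and no amount of inserting powers of $\square_{p}^{-1}$ alone will produce the weight: one must prove a quantitative, $p$-uniform off-diagonal decay for $B_{p}^{\D^{*}}(z,w)$ with explicit $|\!\log|z|^{2}|$-dependence (this is the content of \cite[Theorem 1.1]{bkp}, proved there via the explicit monomial basis and weighted Schur-type estimates), and then show this decay survives conjugation by the localization operators. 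Until you supply that lemma and the bookkeeping that feeds it through the cut-off/resolvent comparison, what you have is a correct strategy, not a proof.
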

\noindent    
Note that in \cite[Theorem 1.1]{bkp} we also established the
off-diagonal expansion of the Bergman kernel 
$B_p(\LargerCdot,\LargerCdot)$.
The main result of the present paper is the following
estimate of the quotient of the Bergman kernels from 
\eqref{eqn_MainThm}:
\begin{thm}   \label{thm_apdx}
If $(\Sigma, \omega_\Sigma, L, h)$ fulfill conditions $(\alpha)$ 
and $(\beta)$, then
 \begin{equation}   \label{e:apdx}
\sup_{z\in V_1\cup\ldots\cup V_N} 
\bigg|\frac{B_p}{B_p^{\D^*}}(z) - 1 \bigg| = 
\mathcal{O}(p^{-\infty})\,,
\end{equation}
i.e., for any $\ell>0$ there exists $C>0$ such that for any
$p\in\N^{*}$ we have
\begin{equation}   \label{e:apdx1}
\sup_{z\in V_1\cup\ldots\cup V_N} 
\bigg|\frac{B_p}{B_p^{\D^*}}(z) - 1 \bigg| \leq C p^{-\ell}.
\end{equation}
\end{thm}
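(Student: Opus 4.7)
The basic strategy is to write
\[
\Bigl|\tfrac{B_p}{B_p^{\D^*}}(z)-1\Bigr|=\frac{|B_p-B_p^{\D^*}|(z)}{B_p^{\D^*}(z)},
\]
control the numerator via Theorem~\ref{thm_MainThm}, and extract a matching pointwise lower bound on $B_p^{\D^*}$ from an explicit expression. Using the orthogonal basis $\{z^n\}_{n\geq 1}$ of $H^0_{(2)}(\D^*,L^p_{\D^*})$, where $\|z^n\|^2_{\bL^2}=\pi\Gamma(p-1)/n^{p-1}$ (a short computation in polar coordinates with the substitution $t=|\log|z|^2|$), one obtains
\[
B_p^{\D^*}(z)=\frac{|\log|z|^2|^p}{\pi\Gamma(p-1)}\sum_{n\geq 1}n^{p-1}|z|^{2n}.
\]
A saddle-point analysis of this series in $n$, with critical index $n_\star=(p-1)/\tau$ where $\tau=|\log|z|^2|$, determines its size: in the ``bulk'' regime $\tau\leq p-1$ one has $B_p^{\D^*}(z)\sim (p-1)/\pi$, while in the ``tip'' regime $\tau>p$ only the $n=1$ term matters and $B_p^{\D^*}(z)\sim|z|^2\tau^p/(\pi\Gamma(p-1))=e^{-\tau}\tau^p/(\pi\Gamma(p-1))$.

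In the bulk regime Theorem~\ref{thm_MainThm} then immediately produces $|B_p/B_p^{\D^*}-1|(z)\leq Cp^{-\ell-1}\tau^{-\delta}=\mathcal{O}(p^{-\infty})$ uniformly. The main obstacle lies in the tip regime: there $B_p^{\D^*}(z)$ is exponentially small in $\tau$, while Theorem~\ref{thm_MainThm} only provides polynomial decay $\tau^{-\delta}$ on the numerator. A naive division yields a ratio bound that diverges like $e^\tau/\tau^p$; concretely, at $\tau\sim cp$ with $c>1$ one has $B_p^{\D^*}(z)\sim(e^{-(c-1)}c)^p\sqrt{p}$, which is exponentially small in $p$, so the estimate from Theorem~\ref{thm_MainThm} fails to give $\mathcal{O}(p^{-\infty})$. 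Theorem~\ref{thm_MainThm} alone is therefore not sufficient in the tip regime, and a refinement is needed.

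To obtain such a refinement, I would exploit the structural fact that every $S\in H^0_{(2)}(\Sigma,L^p)$ vanishes at $a_j$: by \eqref{e:bs1} such an $S$ extends across the puncture, and $\bL^2$-integrability with respect to $\omega_{\D^*}$ forbids a nonzero constant term in the local Taylor expansion. Writing $S=z\,\widetilde S$ in the coordinate $z_j$ then yields the factorizations
\[
B_p(z)=|z|^2|\log|z|^2|^p\,G_p(z),\qquad B_p^{\D^*}(z)=|z|^2|\log|z|^2|^p\,H_p(|z|^2),
\]
with $G_p$ smooth on $\overline{V_j}$ and $H_p$ entire in $u=|z|^2$ satisfying $H_p(u)\geq H_p(0)=1/(\pi\Gamma(p-1))$ (the coefficients of $H_p$ are nonnegative). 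The quotient reduces to $G_p/H_p$, so it suffices to prove a refined estimate $|G_p-H_p|(z)\leq C_{\ell,\delta}\,p^{-\ell}|\log|z|^2|^{-\delta}$ for every $\ell,\delta$. I would derive this by rerunning the analytic-localization and $\dbar$-correction machinery of \cite{bkp} at the level of the reduced kernels $G_p,H_p$: construct approximate orthonormal sections after stripping the universal factor $z$ and track the additional order of vanishing at $a_j$ throughout the $\dbar$-estimates. Combined with the lower bound $H_p\geq 1/(\pi\Gamma(p-1))$, this closes the argument and yields the uniform $\mathcal{O}(p^{-\infty})$ estimate on $V_1\cup\cdots\cup V_N$.
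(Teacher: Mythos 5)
Your diagnosis of the difficulty is exactly right: Theorem~\ref{thm_MainThm} alone fails near the origin because $B_p^{\D^*}$ is factorially small there, so the quotient estimate does not follow by naive division. You also correctly observe that $\bL^2$-holomorphic sections vanish at the puncture, which leads to the factorization $B_p=|z|^2|\log|z|^2|^p\,G_p$, $B_p^{\D^*}=|z|^2|\log|z|^2|^p\,H_p$. However, the final step has a genuine gap. You claim it suffices to show
\[
|G_p-H_p|(z)\;\leq\;C_{\ell,\delta}\,p^{-\ell}\,|\log|z|^2|^{-\delta},
\]
and then divide by $H_p\geq H_p(0)$. But $H_p(0)\sim 1/\Gamma(p-1)$, so this division costs a factor of $\Gamma(p-1)$: you would obtain $|G_p/H_p-1|\leq C\,\Gamma(p-1)\,p^{-\ell}|\log|z|^2|^{-\delta}$, which is super-exponentially \emph{large}, not $\mathcal{O}(p^{-\infty})$. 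What is actually needed is the \emph{relative} bound $|G_p-H_p|\leq C\,p^{-\ell}\,H_p$, i.e. $|G_p-H_p|\lesssim p^{-\ell}/\Gamma(p-1)$ near $z=0$ — an estimate $\Gamma(p-1)$ times stronger than what you state. Moreover, it is not clear that ``rerunning the $\dbar$-machinery after stripping a factor of $z$'' produces such a factorial gain: the spectral-gap estimate from \cite{bkp} improves things by a factor $1/(Cp)$, not $1/\Gamma(p)$.

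The paper closes this precise gap by a different and more explicit route (\S 2): one constructs an orthonormal basis $(\sigma^{(p)}_\ell)_{\ell}$ of $H^0_{(2)}(\Sigma,L^p)$ whose first $\delta_p\sim\alpha p$ members are $\mathcal{O}(p^{-\infty})$-close in $\bL^2$ to the cut-off monomials $c^{(p)}_\ell\chi(|z|)z^\ell$ (Lemma~\ref{lem_phi0phisigma}), and then compares $B_p$ and $B_p^{\D^*}$ \emph{term by term in this basis}. The head terms ($j,\ell\leq\delta_p$) are controlled by the lemma, and the tail terms ($\max\{j,\ell\}\geq\delta_p+1$) are controlled on $\{|z|\leq cp^{-A}\}$ by Cauchy inequalities, with all contributions measured directly against $B_p^{\D^*}(z)$ or $B_p^{\D^*}(z)^{1/2}$. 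This is how the paper avoids ever dividing a crude absolute error by a factorially small quantity; the estimate \eqref{eq2.41a} is intrinsically relative. Your plan would need to reproduce that kind of term-by-term relative control to be viable.
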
  
Theorem \ref{thm_apdx} is related to estimates 
in exponentially small neighborhoods of the punctures obtained 
in \cite[Theorem 1.6]{S} and \cite[Lemma 3.3]{SS}.

For each $p\geq 2$ fixed
$(|z|^{2}\big|\!\log(|z|^2)\big|^{p})^{-1} B_{p}^{\D^*}(z)$ is smooth 
and strictly positive on $\D_{4r}$, as follows from 
\eqref{eq:2.8a}. By \cite[Remark 3.2]{bkp},
any holomorphic $\bL^{2}$-section of $L^{p}$ over $\Sigma$
extends to a homomorphic section on $\overline{\Sigma}$ 
(see the inclusion \eqref{e:bs1}) vanishing at $0$ in $\D_{4r}$. 
Thus by the formula \eqref{e:BFS1} for $B_p$ we see that the quotient
$\frac{B_p}{B_p^{\D^*}}$ is a smooth 
function on $\D_{4r}$ for each $p\geq 2$.
  
\begin{thm}   \label{thm:diffquot}
For all $k\geq 1$ 
and $D_1,\ldots,D_k\in\Big\{\frac{\partial\,}{\partial z}\,,
\frac{\partial\,}{\partial \overline{z}}\Big\}$  we have
\begin{equation}   \label{e:diffquot}
\sup_{z\in\overline{V_1}\cup\ldots\cup \overline{V_N}}
\Big|(D_1\cdots D_k)\frac{B_p}{B_p^{\D^*}}(z)\Big| 
= \mathcal{O}(p^{-\infty}).
\end{equation} 
\end{thm}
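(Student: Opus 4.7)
My plan is to factor out the common vanishing of the two Bergman kernels at each puncture, rewriting the quotient as the diagonal of a sesqui-holomorphic function, and then to use Cauchy's integral formula in each of the two complex variables of a small bidisc to derive derivative bounds from the off-diagonal Bergman kernel expansion of \cite[Theorem 1.1]{bkp}.

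\textbf{Sesqui-holomorphic factorization.} Since every $\bL^{2}$-holomorphic section of $L^{p}$ over $\Sigma$ vanishes at each puncture (see the paragraph above \eqref{e:diffquot}), in the trivialization \eqref{eq:1.6a} on $\D_{4r}$ we can write
\[
B_p(z) = |z|^{2}\bigl|\!\log|z|^{2}\bigr|^{p} F_p(z), \qquad B_p^{\D^*}(z) = |z|^{2}\bigl|\!\log|z|^{2}\bigr|^{p} G_p(z),
\]
with $F_p, G_p$ smooth on $\D_{4r}$, $G_p > 0$, and $B_p/B_p^{\D^*} = F_p/G_p$. Picking a local orthonormal basis of the form $s_\ell^{p} = z\,\tilde s_\ell^{p}\,\mathfrak{e}_{L}^{p}$ for $H^0_{(2)}(\Sigma, L^p)$, the function $F_p(z) = \beta_p(z, \bar z)$ is the diagonal restriction of the sesqui-holomorphic function
\[
\beta_p(z, \bar w) := \sum_{\ell} \tilde s_\ell^{p}(z)\,\overline{\tilde s_\ell^{p}(w)}
\]
on $\D_{4r} \times \D_{4r}$ (holomorphic in $z$, anti-holomorphic in $w$); similarly $G_p(z) = \beta_p^{\D^*}(z,\bar z)$ for the explicit model kernel, which depends only on $z\bar w$.

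\textbf{Cauchy's formula and off-diagonal transfer.} For any $z_0 \in \overline{V_j}$ and any $\rho > 0$ with $\D(z_0, \rho) \subset \D_{4r}$, iterating Cauchy's integral formula in the two complex variables of the bidisc gives
\[
\bigl|\partial_z^{a}\partial_{\bar z}^{b}(F_p - G_p)(z_0)\bigr| \;\leq\; \frac{a!\,b!}{\rho^{a+b}}\sup_{|\zeta - z_0|=\rho,\;|\eta - z_0|=\rho}\bigl|(\beta_p - \beta_p^{\D^*})(\zeta, \bar\eta)\bigr|.
\]
The off-diagonal Bergman expansion of \cite[Theorem 1.1]{bkp}, which is the off-diagonal counterpart of Theorem \ref{thm_MainThm} for $B_p(z, w) - B_p^{\D^*}(z, w)$ on the bidisc, provides the needed control; dividing by the weight factors $|\zeta\eta|\bigl(|\log|\zeta|^{2}||\log|\eta|^{2}|\bigr)^{p/2}$ implicit in the factorization above, one obtains $\sup_K|\beta_p - \beta_p^{\D^*}| = \mathcal{O}(p^{-\infty})$ (up to a polynomial-in-$p$ envelope) on a fixed bidisc $K \subset \D_{4r} \times \D_{4r}$. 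Symmetric bounds on $|\beta_p|$ and $|\beta_p^{\D^*}|$ themselves are obtained from the Cauchy-Schwarz inequalities $|\beta_p(z,\bar w)|^{2}\leq F_p(z)F_p(w)$ and $|\beta_p^{\D^*}(z,\bar w)|^{2}\leq G_p(z)G_p(w)$.

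\textbf{Quotient rule and conclusion.} Writing $F_p/G_p = 1 + (F_p - G_p)/G_p$ and iterating the Leibniz rule, each mixed derivative $\partial_z^{a}\partial_{\bar z}^{b}(F_p/G_p)$ becomes a linear combination of products of derivatives of $F_p - G_p$ with derivatives of $1/G_p$. The factors of the first type are $\mathcal{O}(p^{-\infty})$ by the Cauchy bound of the preceding paragraph; those of the second type, which can be computed from the explicit power series of $G_p$, have at worst polynomial growth in $p$ on $\overline{V_j}$, where $G_p$ also enjoys a uniform positive lower bound. Combining these ingredients yields \eqref{e:diffquot}. The main obstacle is the off-diagonal transfer in the second step: Theorem \ref{thm_apdx} by itself provides only on-diagonal $L^{\infty}$-control, whereas Cauchy's formula in the bidisc requires bounds on a fixed (in $p$) bidisc neighborhood of the diagonal, and this is exactly what the off-diagonal expansion \cite[Theorem 1.1]{bkp} supplies.
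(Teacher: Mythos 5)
Your sesqui-holomorphic factorization and two-variable Cauchy idea are natural, and in fact the paper's proof also works with the Taylor coefficients $a_{j\ell}^{(p)}$ and $c_j^{(p)}$ of the sections, with Cauchy estimates \eqref{eq:2.27a} controlling the tail of the coefficient sequence. But the quotient-rule step rests on a decisive error.

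You assert that ``$G_p$ also enjoys a uniform positive lower bound'' on $\overline{V_j}$. This is false: from \eqref{eq:2.7a} and \eqref{eq:2.8a},
\[
G_p(0)=\lim_{z\to 0}\frac{B_p^{\D^*}(z)}{|z|^2\big|\!\log(|z|^2)\big|^p}
=(c_1^{(p)})^2=\frac{1}{2\pi(p-2)!}\,,
\]
which decays super-exponentially in $p$, so $1/G_p$ is of size $(p-2)!$ near the puncture. Meanwhile, your Cauchy bound with a radius $\rho$ fixed independently of $p$ gives at best, after undoing the Hermitian weight $\big|\!\log(|\cdot|^2)\big|^{p/2}$ carried by the off-diagonal estimate of \cite[Theorem~1.1]{bkp},
\[
\sup_{\overline{\D_{2r}}}\big|\partial_z^a\partial_{\bar z}^b(F_p-G_p)\big|
=\mathcal{O}\big(p^{-m}\big|\!\log(\rho^2)\big|^{-p}\big)\quad\text{for every }m,
\]
which is only exponentially small. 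Since $(p-2)!\,\big|\!\log(\rho^2)\big|^{-p}\to\infty$, multiplying by $1/G_p(z)$ for $z$ near $0$ gives a bound that diverges, and the quotient-rule argument collapses. This is exactly the pitfall the introduction flags: ``$B_p^{\D^*}(\LargerCdot)$ takes extremely small values arbitrarily near the origin''. An absolute bound on $B_p-B_p^{\D^*}$, even exponentially decaying, is not enough near the puncture; one needs a \emph{relative} estimate against $B_p^{\D^*}$, and the off-diagonal expansion used on a fixed bidisc does not supply one.

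The paper circumvents this by constructing in Lemma \ref{prop:rfnd_estmt} an orthonormal basis $(\sigma_\ell^{(p)})$ of $H^0_{(2)}(\Sigma,L^p)$ whose first $\delta_p'$ members agree with the truncated monomials $c_\ell^{(p)}\chi(|z|)z^\ell\mathfrak{e}_L^p$ with error $\mathcal{O}(p\,e^{-\kappa p})$ for any prescribed $\kappa$; this super-exponential accuracy comes not from off-diagonal kernel bounds but from the spectral gap \eqref{eq:3.85a} applied to $\square_p\phi_{0,\ell}^{(p)}$. The resulting inequality $|a_{j\ell}^{(p)}-\delta_{j\ell}c_j^{(p)}|=\mathcal{O}(pe^{-\kappa p})\,c_j^{(p)}$ of Lemma \ref{eq:t3.4} is a \emph{relative} estimate on the Taylor coefficients, scaled by $c_j^{(p)}$, which is precisely what your argument is missing. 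Choosing $\kappa>\tfrac12\log 2$ to beat the combinatorial factors $2^{p/2}$ from \eqref{eq:4.18a}, the differentiated quotient, written out as the multi-sum \eqref{eq:4.8a} (or \eqref{eq:4.95a} for higher derivatives), is estimated pointwise against powers of $\beta_p^{\D^*}(z)$ on the shrinking discs $|z|\leq c'p^{-A'}$, while the region $c'p^{-A'}\leq|z|\leq r$ is handled by Theorem \ref{thm_MainThm} together with \eqref{eq:2.12a}. Your Cauchy estimates play, in spirit, the role of the paper's tail-term bounds \eqref{eq:4.42a}--\eqref{eq:4.66a}, but they cannot replace the head-term analysis, which requires the $e^{-\kappa p}$ precision of the adapted basis.
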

 
\begin{rmk}
\rm{Theorem \ref{thm_MainThm} admits a generalization to
orbifold Riemann surfaces.   
Indeed, assume that $\overline\Sigma$ is a compact orbifold Riemann
surface such that the finite set 
$D\subset\overline\Sigma$
does not meet the (orbifold) singular set of $\overline\Sigma$.
Then by the same argument as in \cite[Remark 1.3]{bkp} (using
\cite{DLM06,DLM12}) we see that
Theorems \ref{thm_apdx} and \ref{thm:diffquot} still hold in this context.}
\end{rmk}	
 
Note that the $C^k$-norm used in \eqref{eqn_MainThm}
 is induced by $\omega_{\D^*}$, roughly the sup-norm with respect to
 the derivatives defined by the vector fields
$z\log(|z|^2) \frac{\partial\,}{\partial z}$ and
$\overline{z}\log(|z|^2)  \frac{\partial\,}{\partial \overline{z}}$, 
which vanish at $z=0$.
Hence the norm in \eqref{e:diffquot} is stronger than 
the $C^k$-norm used in \eqref{eqn_MainThm}, 
because the norm in \eqref{e:diffquot} is defined by 
using derivatives along the vector fields
$\frac{\partial\,}{\partial z}$ and 
$\frac{\partial\,}{\partial \overline{z}}$\,.
   
  Let us mention at this stage that even if the results above follow from 
  our work \cite{bkp}, relying more precisely on \cite[Theorem 1.2]{bkp}, 
  the proofs are by no means an obvious rewriting of 
  \cite[Theorem 1.2]{bkp} 
  (for instance), since $B_p^{\D^*}(\LargerCdot)$ takes extremely 
  small values arbitrarily near the origin. This can be seen in 
  \cite[\S3.2]{bkp} and it is specific to the non-compact framework. 
  What estimate \eqref{e:apdx} says is that $B_p(\LargerCdot)$ 
  follows such a behaviour very closely
  in the corresponding regions of $\Sigma$ via the chosen coordinates. 
  
  Here is a general strategy of our approach for 
  Theorems \ref{thm_apdx} and \ref{thm:diffquot}. We choose a special 
  orthonormal basis $\{\sigma^{(p)}_{\ell}\}_{\ell=1}^{d_{p}}$
  of $H^{0}_{(2)}(\Sigma, L^{p})$ starting from $z^{l}$ 
  on $\D^{*}_{4r}$ for 
  $1\leq l\leq \delta_{p}$ with $0<\alpha<\delta_{p}/p <\alpha_{1}<1$.
  Our choice of $\sigma^{(p)}_{\ell}$ implies that the coefficients of the 
  expansion 
  $$\sigma^{(p)}_{\ell}(z)=\sum_{j=1}^{\infty} a^{(p)}_{j\ell}z^{j}$$
  of $\sigma^{(p)}_{\ell}$ on $\D_{4r}^{*}$ satisfy
  $a^{(p)}_{j\ell}=0$ if $j<\delta_{p}$ and $j<l\leq d_{p}$ 
  (cf.\ \eqref{eq:2.26a}). Now we separate the contribution of 
  $\sigma^{(p)}_{\ell}$, $c^{(p)}_{\ell}$  (cf.\ \eqref{eq:2.7a}), 
  $a^{(p)}_{j\ell}$ in $B_{p}, B_{p}^{\D^{*}}$ in two groups: 
  $1\leq j, \ell\leq \delta_{p}$; $\max\{j, \ell\}\geq \delta_{p}+1$.
  The contribution corresponding to $1\leq j, \ell\leq \delta_{p}$,
  will be controlled by using Lemma \ref{lem_phi0phisigma} 
  (or \ref{prop:rfnd_estmt}). The contribution corresponding to 
  $\max\{j, \ell\}\geq \delta_{p}+1$ will be handled by a direct 
  application of Cauchy inequalities \eqref{eq:2.27a}.  It turns out that
  by suitably choosing $c,A>0$ this contribution has uniformly
  the relative size $2^{-\alpha p}$ compared to $B_{p}^{\D^{*}}$ on 
  $|z|\leq cp^{-A}$.

This paper is organized as follows. In Section \ref{eq:s2}, 
we establish Theorem \ref{thm_apdx} based on 
the off-diagonal expansion of Bergman kernel from \cite[\S 6]{bkp}.
In Section \ref{eq:s3},  we establish Theorem \ref{thm:diffquot}
by refining the argument from Section \ref{eq:s2}.
In Section \ref{eq:s4} we give some applications of the main results.
  
Notation:   We denote $\lfloor x\rfloor$ as the integer part of $x\in \R$.
  
\smallskip
\noindent
\textbf{\emph{Acknowledgments.}} We would like to thank Professor 
Jean-Michel Bismut for helpful discussions. In particular, 
Theorem \ref{thm:diffquot} answers a question raised 
by him at CIRM in October 2018.

\section{$C^{0}$-estimate for the quotient of Bergman kernels} 
\label{eq:s2}
  
This section is organized as follows. In Section \ref{eq:s2.1},
we obtain the $C^{0}$-estimate for the quotient of Bergman kernels,
Theorem \ref{thm_apdx}, admitting first an  integral estimate,
Lemma \ref{lem_phi0phisigma}. In Section  \ref{eq:s2.2},
we deduce Lemma \ref{lem_phi0phisigma} from 
the \textit{two-variable} Poincar\'{e} type Bergman kernel estimate 
of \cite[Theorem 1.1 and Corollary 6.1]{bkp}.

\subsection{Proof of Theorem \ref{thm_apdx}}\label{eq:s2.1}

We recall first some basic facts.
For $\sigma\in C_{0}^{\infty}(\Sigma, L^{p})$,
the space of smooth and compactly supported sections of $L^{p}$ over 
$\Sigma$, set
\begin{align}\label{eq:2.1a}
\|\sigma\|_{\bL^2_p(\Sigma)}^{2}
:= \int_{\Sigma}|\sigma|^{2}_{h^{p}}\, 	\omega_{\Sigma}.
\end{align}
Let $\bL^2_p(\Sigma)$ be the 
$\|\cdot\|_{\bL^{2}_p(\Sigma)}$-completion of 
$C_{0}^{\infty}(\Sigma, L^{p})$.

By \cite[Remark 3.2]{bkp} the inclusion \eqref{e:bs1} identifies the 
space $H^{0}_{(2)}(\Sigma, L^{p})$ of $\bL^2$-holomorphic sections 
of $L^p$ over $\Sigma$ to the subspace of $H^0(\overline\Sigma,L^p)$
consisting of sections vanishing at the punctures, so it induces
an isomorphism of vector spaces
\begin{equation}\label{e:bs2}
H^0_{(2)}(\Sigma,L^p)\cong
H^{0}(\overline{\Sigma},L^{p}\otimes
\mathscr{O}_{\overline{\Sigma}}(-D)),
\end{equation}
where $\mathscr{O}_{\overline{\Sigma}}(-D)$ is the holomorphic line 
bundle on $\overline{\Sigma}$ defined by the divisor $- D$. 
By the Riemann-Roch theorem we have for all $p$ with $p\deg(L)-N>2g-2$, 
\begin{align}\label{eq:2.4a}
d_p: = \dim H^{0}_{(2)}(\Sigma, L^{p})
= \dim H^{0}(\overline{\Sigma}, 
L^{p}\otimes \mathscr{O}_{\overline{\Sigma}}(-D))
= \deg (L)\, \,  p +1- g -N, 
\end{align}
where $\deg(L)$ is the degree of $L$ over 
$\overline{\Sigma}$, and $g$ is the genus of $\overline{\Sigma}$.

The Bergman kernel function \eqref{e:BFS1} satisfies the following
variational characterization, see e.g.\ \cite[Lemma 3.1]{CM11}, 
\begin{align}\label{eq:2.5a}
B_{p}(z) = \sup_{0\neq \sigma\in H^{0}_{(2)}(\Sigma, L^{p})}
\frac{|\sigma(z)|^{2}_{h^{p}}}{\|\sigma\|_{\bL^2_p(\Sigma)}^{2}}\,,
\quad \text{ for } z\in \Sigma.
\end{align}
By the expansion of the Bergman kernel
on a complete manifold \cite[Theorem 6.1.1]{mm}
(cf.\ also \cite[Theorem 2.1, Corollary 2.4]{bkp}), 
there exist coefficients 
$\bb_i\in C^\infty(\Sigma)$,  $i\in\N$, such that for any $k,m\in \N$,
any compact set $K\subset \Sigma$,
we have in the $C^{m}$-topology on $K$, 
\begin{align}\label{eq:2.6a}
B_p(x)=\sum^k_{i=0}\bb_i(x)p^{1-i}+\mathcal{O}(p^{-k})\,, 
\quad \text{ as } p\to \infty,
\end{align}
with $\bb_0=- \bb_1= \frac{1}{2\pi}$ on each $V_{j}$.

Consider now for $p\geq2$ the space $H_{(2)}^p(\D^*)$
of holomorphic ${\bL}^2$-functions on $\D^*$ with respect to the weight 
$\|1\|^{2}(z)=\big|\!\log(|z|^2)\big|^p$ 
(corresponding to a metric on the trivial line bundle $\C$)
and volume form $\omega_{\D^*}$ on $\D^*$.
An orthonormal basis of $H_{(2)}^p(\D^*)$ is given by 
(cf.\ \cite[Theorem 3.1]{bkp}), 
\begin{equation}\label{eq:2.7a}
  c^{(p)}_\ell z^\ell \text{ with }  \ell\in\N,\,\ell\geq 1 \text{ and } 
c^{(p)}_\ell = \left(\dfrac{\ell^{p-1}}{2\pi (p-2)!}\right)^{1/2}
   = \|z^{\ell}\|_{\bL^2_p(\D^*)}^{-1}\,,
\end{equation}
and hence
\begin{equation}\label{eq:2.8a}
B_p^{\D^*}(z) = \big|\!\log(|z|^2)\big|^{p}
\sum_{\ell=1}^{\infty} (c^{(p)}_\ell)^2 |z|^{2\ell}\,,
\qquad \text{ for }  z\in \D^{*}.
\end{equation}
For any $m\in \N$, $0<b<1$ 
and $0<\gamma<\frac{1}{2}$ 
there exists by \cite[Proposition 3.3]{bkp} 
$\epsilon = \epsilon(b,\gamma)>0$ such that
 \begin{align}\label{eq:2.12a}
   \Big\|B_p^{\D^*}(z) - \frac{p-1}{2\pi}
   \Big\|_{C^m(\{b e^{-p^\gamma}\leq |z|<1\}, \omega_{\D^*})} 
   = \mathcal{O}\big(e^{-\epsilon p^{1-2\gamma}}\big)
   \:\: \text{ as } p\to +\infty\, .
  \end{align} 
Taking into account Theorem \ref{thm_MainThm} and \eqref{eq:2.12a} 
we see that in order to prove Theorem \ref{thm_apdx} it suffices, 
after reducing to some $V_j$ 
and identifying the geometric data on $\D^*_{4r}$ and $\Sigma$ 
via \eqref{eq:1.6a},  
to show that for some (small) $c>0$ and (large) $A>0$, 
and for all $l\geq 0$ there exists $C=C(c,A,l)>0$ such that 
for all $p\geq 2$, 
    \begin{equation}   \label{e:apdx2}
      \sup_{0<|z|\leq cp^{-A}} 
	  \bigg|\frac{B_p}{B_p^{\D^*}}(z) - 1 \bigg| \leq Cp^{-l}. 
    \end{equation}
   %
We now start to establish \eqref{e:apdx2}. In the whole paper
we use the following conventions.
\begin{align}\label{eq:2.13a}\begin{split}
&\text{We fix $0<r<(4e)^{-1}$ as in \eqref{eq:1.6a}, 
 and $0<\beta<1$ such  that $r^{\beta}<2r$.} \\
&\text{We fix a (non-increasing) smooth cut-off function 
$\chi:[0,1]\to \R$, }\\
&\text{satisfying $\chi(u)=1$ if $u\leq r^{\beta}$
 and $\chi(u)=0$ 
 if $u\geq 2r$.} \\
& \text{We set}\;\;\delta_p = \left\lfloor \frac{p-2}{2|\log r|}\right\rfloor
 \text{ for } p\in \N, p\geq 2.
   \end{split} \end{align} 
The choice of $\delta_p$ will become clear in \eqref{e:implies},
\eqref{eq:2.31a} and \eqref{eq:3.6a}, for example.   
By \eqref{eq:2.13a} there exist $\alpha>0$ such that 
\begin{equation}   \label{e:alphabeta}
\alpha p\leq \delta_p \quad \text{ and }    \quad 
\delta_p +1 \leq \frac{1}{2}\,  p \quad \text{ for }
p \geq 2 + 2 |\log r|. 
\end{equation}
To establish \eqref{e:apdx2} we proceed along the following lines: 

\begin{enumerate}
\item 
For $\ell\in\{1,\ldots,\delta_p\}$, we set
\begin{equation}   \label{e:dfphi0}
 \phi^{(p)}_{\ell,0} = c^{(p)}_\ell\chi(|z|)z^\ell.
\end{equation}
     \item Using the trivialization, 
     that is, identifying $\phi^{(p)}_{\ell,0}$ with 
	$\phi^{(p)}_{\ell,0}\mathfrak{e}_L^p$ when we work on $\Sigma$, 
      we see the $\phi^{(p)}_{\ell,0}$ as (smooth) $\bL^2$ sections of 
	$L^p$ over $\Sigma$, that we correct into \textit{holomorphic} 
    $\bL^2$ sections $\phi^{(p)}_{\ell}$ of $L^p$, 
     by orthogonal $\bL^2_p(\Sigma)$-projection. 
  \item Next we correct the family 
$(\phi^{(p)}_{\ell})_{1\leq\ell\leq\delta_p}$ 
 into an \textit{orthonormal} family
	$(\sigma^{(p)}_{\ell})_{1\leq\ell\leq\delta_p}$ 
  by the Gram-Schmidt procedure, and we further complete 
  $(\sigma^{(p)}_{\ell})_{1\leq\ell\leq\delta_p}$
 into an orthonormal basis $(\sigma^{(p)}_{\ell})_{1\leq\ell\leq d_p}$ of 
 $H^{0}_{(2)}(\Sigma, L^{p})$. 
In particular, for any $1\leq j \leq \delta_{p}$,
\begin{equation}    \label{eq:2.14a}
{\rm Span}\big\{\phi^{(p)}_{1,0},\cdots, \phi^{(p)}_{j,0}\big\}
= {\rm Span}\big\{\phi^{(p)}_{1},\cdots, \phi^{(p)}_{j}\big\}
= {\rm Span}\big\{\sigma^{(p)}_{1},\cdots, \sigma^{(p)}_{j}\big\}.
\end{equation}
     \item Finally, we carefully compare $B^{\D^{*}}_{p}$ with $B_{p}$
using the three steps of the above construction to 
get estimate \eqref{e:apdx2}; 
of particular importance are the following intermediate estimates
which will be deduced from \cite[\S 6]{bkp}:
    \end{enumerate}
   \begin{lem}   \label{lem_phi0phisigma}
    With the notations above, 
    for all $m\in \N$, there exists  $C=C(m)>0$ 
    such that for all $p\in \N^{*}$, $p\geq 2$,
	and all $j,\ell\in\{1,\ldots,\delta_p\}$, 
     \begin{equation}   \label{e:phi0}
     \begin{split}
      1 - Cp^{-m}
\leq  \big\|\phi^{(p)}_{\ell,0}\big\|_{\bL^2_p(\Sigma)}^2&=
\big(c^{(p)}_\ell\big)^2 \int_{\D^*_{2r}} 
 \chi^{2}(|z|) |z|^{2\ell}\big|\!\log(|z|^2)\big|^{p}\, \omega_ {\D^*}\\
 &\leq \big(c^{(p)}_\ell\big)^2 \int_{\D^*_{2r}} \chi(|z|)|z|^{2\ell}\big|
	   \!\log(|z|^2)\big|^{p}\, \omega_ {\D^*}
        \leq 1\,,
        \end{split}
     \end{equation}
and moreover, 
     \begin{align}    \label{e:phi0phi} \begin{split}  
&\big\|\sigma^{(p)}_{\ell}- \phi^{(p)}_{\ell,0}\big\|_{\bL^2_p(\Sigma)}		 
	\leq C p^{-m},\\
   &   \big|\big\langle \phi^{(p)}_{j}, \sigma^{(p)}_{\ell}
	  \big\rangle_{\bL^{2}_p(\Sigma)} - \delta_{j\ell}\big| 
                    \leq C p^{-m}.
\end{split}     \end{align}
   \end{lem}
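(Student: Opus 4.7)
The plan is to prove \eqref{e:phi0} by an explicit computation on the disc model, and then deduce \eqref{e:phi0phi} by feeding the two-variable off-diagonal Bergman kernel estimate of \cite[Theorem~1.1 and Corollary~6.1]{bkp} into a controlled Gram--Schmidt step.

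For the first display, the middle equality is the definition of $\|\phi^{(p)}_{\ell,0}\|^{2}$ after the identification \eqref{eq:1.6a}. The inequality $\int\chi^{2}\leq\int\chi$ uses $\chi\in[0,1]$, and the upper bound $\leq 1$ follows from extending integration over $\D^{*}_{2r}$ to all of $\D^{*}$ and invoking the normalization $(c^{(p)}_\ell)^{2}\int_{\D^{*}}|z|^{2\ell}|\log(|z|^{2})|^{p}\omega_{\D^{*}}=1$ from \eqref{eq:2.7a}. The interesting bound is the lower one: writing
\begin{equation*}
1-\|\phi^{(p)}_{\ell,0}\|^{2}=(c^{(p)}_\ell)^{2}\int_{\D^{*}}(1-\chi^{2}(|z|))|z|^{2\ell}|\log(|z|^{2})|^{p}\omega_{\D^{*}}
\end{equation*}
and using $1-\chi^{2}=0$ on $\{|z|\leq r^{\beta}\}$, the quantity is dominated by $T_{p,\ell}:=(c^{(p)}_\ell)^{2}\int_{|z|\geq r^{\beta}}|z|^{2\ell}|\log(|z|^{2})|^{p}\omega_{\D^{*}}$. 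Polar coordinates and the substitution $t=-2\ell\log|z|$ rewrite this as $T_{p,\ell}=\frac{1}{(p-2)!}\int_{0}^{2\ell\beta|\log r|}t^{p-2}e^{-t}\,dt$. Because $\ell\leq\delta_{p}\leq(p-2)/(2|\log r|)$ with $\beta<1$, the upper limit of integration is at most $\beta(p-2)$, strictly below the mode $p-2$ of the integrand $t^{p-2}e^{-t}$, and a Chernoff bound yields $T_{p,\ell}\leq Ce^{-cp}$ for some $c>0$ independent of $\ell$, much stronger than $\mathcal{O}(p^{-m})$.

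For \eqref{e:phi0phi}, the decisive input is the two-variable off-diagonal expansion of \cite[Theorem~1.1, Corollary~6.1]{bkp}: after \eqref{eq:1.6a}, one has $B_{p}(z,w)-B_{p}^{\D^{*}}(z,w)=\mathcal{O}(p^{-M})$ uniformly for $z,w$ in the coordinate disc and every prescribed $M\in\mathbb{N}$. Since $\phi^{(p)}_{\ell}=B_{p}\phi^{(p)}_{\ell,0}$ and $B_{p}$ is a self-adjoint idempotent, the Gram entries read $\langle\phi^{(p)}_{j},\phi^{(p)}_{\ell}\rangle_{\bL^{2}_{p}(\Sigma)}=\langle\phi^{(p)}_{j,0},B_{p}\phi^{(p)}_{\ell,0}\rangle_{\bL^{2}_{p}(\Sigma)}$. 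Since $\phi^{(p)}_{j,0},\phi^{(p)}_{\ell,0}$ are both supported in the coordinate disc, we may swap $B_{p}$ for $B_{p}^{\D^{*}}$ at a cost of $\mathcal{O}(p^{-M})$. On the disc, rotational invariance together with the orthonormality of $\{c^{(p)}_{k}z^{k}\}$ forces $B_{p}^{\D^{*}}\phi^{(p)}_{\ell,0}=\alpha_{\ell}\,c^{(p)}_{\ell}z^{\ell}$ with $\alpha_{\ell}=(c^{(p)}_\ell)^{2}\int\chi|z|^{2\ell}|\log(|z|^{2})|^{p}\omega_{\D^{*}}$, and the inner product therefore equals $\alpha_{j}\alpha_{\ell}\delta_{j\ell}+\mathcal{O}(p^{-M})$. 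The first part gives $\alpha_{\ell}=1+\mathcal{O}(p^{-m})$, so the Gram matrix $G=(\langle\phi^{(p)}_{j},\phi^{(p)}_{\ell}\rangle)_{1\leq j,\ell\leq\delta_{p}}$ satisfies $\|G-I\|_{\mathrm{op}}\leq\delta_{p}\cdot\|G-I\|_{\max}=\mathcal{O}(p^{1-M})$, which is $\mathcal{O}(p^{-m'})$ for any $m'$ upon enlarging $M$.

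To conclude, Pythagoras applied to the orthogonal decomposition of $\phi^{(p)}_{\ell,0}$ into $\phi^{(p)}_{\ell}$ and its $H^{0}_{(2)}$-orthogonal part yields $\|\phi^{(p)}_{\ell,0}-\phi^{(p)}_{\ell}\|^{2}=\|\phi^{(p)}_{\ell,0}\|^{2}-\|\phi^{(p)}_{\ell}\|^{2}=\mathcal{O}(p^{-m})$, using the first part and the diagonal of the Gram estimate. The Gram--Schmidt step applied to the near-orthonormal family $(\phi^{(p)}_{\ell})_{\ell\leq\delta_{p}}$ produces $\sigma^{(p)}_{\ell}$ with $\|\sigma^{(p)}_{\ell}-\phi^{(p)}_{\ell}\|=\mathcal{O}(p^{-m'})$ (for instance via the L\"owdin formula $\sigma=\phi\,G^{-1/2}$ combined with $G^{-1/2}=I+\mathcal{O}(p^{-m'})$ in operator norm); the triangle inequality then gives the first half of \eqref{e:phi0phi}. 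For the second half, expand $\langle\phi^{(p)}_{j},\sigma^{(p)}_{\ell}\rangle=\langle\phi^{(p)}_{j},\phi^{(p)}_{\ell}\rangle+\langle\phi^{(p)}_{j},\sigma^{(p)}_{\ell}-\phi^{(p)}_{\ell}\rangle$ and apply Cauchy--Schwarz with $\|\phi^{(p)}_{j}\|\leq 1$. The main technical hurdle is neither the disc-model computation nor the linear-algebra step individually, but ensuring that the $\delta_{p}\sim p$ amplification at the Gram--Schmidt stage does not destroy the decay; this is automatic because \cite{bkp} provides the two-variable error as $\mathcal{O}(p^{-M})$ for \emph{arbitrary} $M$.
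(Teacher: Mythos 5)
Your proof of \eqref{e:phi0} follows essentially the same path as the paper: the substitution $t=-2\ell\log|z|$ reduces the tail to an incomplete Gamma integral over $[0,2\ell\beta|\log r|]\subset[0,\beta(p-2)]$, below the mode $p-2$ of $t^{p-2}e^{-t}$, and a Stirling/Chernoff estimate yields exponential decay. This is exactly the paper's \eqref{e:phi01}--\eqref{eq:3.6a}.

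For \eqref{e:phi0phi} you take a genuinely different route. The paper decomposes $\phi^{(p)}_{\ell}=c_\ell^{(p)}(I^{(p)}_{1,\ell}+I^{(p)}_{2,\ell}+I^{(p)}_{3,\ell})$ (see \eqref{eq:3.11a}--\eqref{e:phi0dcmp}), estimates each piece in $\bL^2(\D_{3r}^*)$, treats $\Sigma\smallsetminus\D_{3r}^*$ separately via the rapid off-diagonal decay \eqref{eq:3.26a}, and only then passes to the Gram matrix \eqref{eq:3.29a}. You instead go directly to the Gram entries $\langle\phi^{(p)}_{j},\phi^{(p)}_{\ell}\rangle=\langle\phi^{(p)}_{j,0},B_p\phi^{(p)}_{\ell,0}\rangle$, replace $B_p$ by $B^{\D^*}_p$ using the two-variable estimate, diagonalize by rotational invariance, and then recover $\|\phi^{(p)}_{\ell}-\phi^{(p)}_{\ell,0}\|$ by Pythagoras. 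This is a cleaner bookkeeping: you extract the scalar invariant $\|\phi_\ell\|^2$ from the Gram matrix and deduce the section-level estimate, rather than controlling the section directly. Both approaches still hinge on the same input from \cite[Theorem~1.1 / Corollary~6.1]{bkp}, so the saving is organizational rather than in depth.

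Two points deserve more care. First, the off-diagonal input is not literally ``$B_p(z,w)-B_p^{\D^*}(z,w)=\mathcal{O}(p^{-M})$ uniformly on the coordinate disc'': what \cite{bkp} provides, and what the paper actually uses in \eqref{eq:3.14a}, is a \emph{weighted} kernel estimate whose bound carries factors $\big|\!\log(|x|^2)\big|^{-\nu-p/2}\big|\!\log(|y|^2)\big|^{-\nu+p/2}$. Your argument still works because when you pair against the normalized $\phi^{(p)}_{j,0},\phi^{(p)}_{\ell,0}$ (whose $\bL^2_p$ norms absorb the $|\log|^{\pm p/2}$ factors via \eqref{eq:2.7a}, as the paper does in \eqref{eq:3.14a}--\eqref{e:phi0phi2}) the weights integrate out to an $\mathcal{O}(1)$ constant, so the stated conclusion $\langle\phi^{(p)}_{j,0},(B_p-B^{\D^*}_p)\phi^{(p)}_{\ell,0}\rangle=\mathcal{O}(p^{-M})$ is correct — but you should invoke the weighted form rather than the unweighted one. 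Second, you conflate Gram--Schmidt with L\"owdin symmetric orthogonalization $\sigma=\phi G^{-1/2}$: the $\sigma^{(p)}_{\ell}$ in the lemma are specifically the Gram--Schmidt basis (it matters later for the echelon property \eqref{e:echelon1}--\eqref{e:echelon2}), and L\"owdin gives a genuinely different orthonormal family. The estimate you want, $\|\sigma^{(p)}_{\ell}-\phi^{(p)}_{\ell}\|=\mathcal{O}(\delta_p\|G-I\|_{\max})$, does hold for true Gram--Schmidt (cf.\ the paper's \eqref{eq:3.32a}), so the conclusion is unaffected — but the substitution of L\"owdin as ``for instance'' is not an instance of the required procedure.

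Aside from these two imprecisions, the argument is sound and all the stated bounds are reachable.
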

The proof of Lemma \ref{lem_phi0phisigma} is postponed 
to Section \ref{eq:s2.2}.

  Notice that we take care of stating estimates uniform in 
  $j,\ell\in \{1,\ldots,\delta_p\}$.
  Observe moreover that \eqref{e:phi0}, \eqref{e:phi0phi} 
  are   \textit{integral} estimates, 
  whereas we want to establish \textit{pointwise} estimates in the end, 
  hence we need an extra effort to convert these (among others)
  into \eqref{e:apdx2}. 
  
Let  us see now how to build on \eqref{e:phi0phi} 
to get  the desired \eqref{e:apdx2}.

  First, by  \eqref{e:BFS1},   \eqref{eq:2.8a}, \eqref{e:dfphi0},  
  and the construction of  $\phi_{\ell,0}^{(p)}$ and $\sigma^{(p)}_\ell$
  we have for $z \in \D^*_r$, 
   \begin{align}\label{eq:2.16a}\begin{split}
    B_p^{\D^*}(z) &
      \,   = \sum_{\ell=1}^{\delta_p} \big| \phi^{(p)}_{\ell,0}
	  \big|_{h^p,z}^2 
              + \big|\!\log(|z|^2)\big|^{p}\sum_{\ell=\delta_p+1}^{\infty} 
			  (c^{(p)}_\ell)^2 |z|^{2\ell}    \\
 \,   &= B_p(z) - \sum_{\ell=\delta_p+1}^{d_p}
 \big| \sigma^{(p)}_{\ell}\big|_{h^p,z}^2  
+2{\rm Re}\Big[\sum_{\ell=1}^{\delta_p} 
    \big\langle \sigma^{(p)}_{\ell}, 
  \phi^{(p)}_{\ell,0} -\sigma^{(p)}_{\ell}\big\rangle_{h^p,z}\Big]   \\
 & \quad   +\sum_{\ell=1}^{\delta_p} \big| \phi^{(p)}_{\ell,0} 
 -\sigma^{(p)}_{\ell}\big|_{h^p,z}^2
    +\big|\!\log(|z|^2)\big|^{p}\sum_{\ell=\delta_p+1}^{\infty} 
	(c^{(p)}_\ell)^2 |z|^{2\ell}.    
 \end{split}  \end{align}
  We deal with the summands 
  of the last three terms separately; 
  we start by claiming that up to a judicious choice of $c>0$ and $A>0$ 
  we have for $0<|z|\leq cp^{-A}$: 
   \begin{equation}   \label{e:btail}
    \big|\!\log(|z|^2)\big|^{p}\sum_{\ell=\delta_p+1}^{\infty}
	(c^{(p)}_\ell)^2 |z|^{2\ell}
    = \mathcal{O}(p^{-\infty})\cdot B_p^{\D^*}(z)\,,
    \qquad\text{as }p\to \infty,
   \end{equation}
  that is, 
  $$\sup_{0<|z|\leq cp^{-A}}
    \big[ B_p^{\D^*}(z)^{-1}|\!\log(|z|^2)|^{p}
	\sum_{\ell=\delta_p+1}^{\infty} (c^{(p)}_\ell)^2 |z|^{2\ell}\big]
   = \mathcal{O}(p^{-\infty}).$$
Indeed, we have $\frac{\ell+\delta_p}{\ell} \leq \delta_p+1$
for all $\ell\geq 1$, so by \eqref{eq:2.7a} we have for $z\in \D^*$, 
   \begin{equation} \label{eq:2.18a}
   \begin{split}
    \big|\!\log(|z|^2)\big|^{p}\sum_{\ell=\delta_p+1}^{\infty}
	(c^{(p)}_\ell)^2 |z|^{2\ell}
&=   \big|\!\log(|z|^2)\big|^{p}\frac{|z|^{2\delta_p}}{2\pi (p-2)!} 
    \sum_{\ell=1}^{\infty} \Big(\frac{\ell+\delta_p}{\ell}\Big)^{p-1} 
	\ell^{p-1}|z|^{2\ell}   \\      
&\leq (\delta_p+1)^{p-1}\frac{|z|^{2\delta_p}}{2\pi (p-2)!} 
             \big|\!\log(|z|^2)\big|^{p}\sum_{\ell=1}^{\infty}  
			 \ell^{p-1}|z|^{2\ell}            \\
&=(\delta_p+1\big)^{p-1}|z|^{2\delta_p}B_p^{\D^*}(z). 
   \end{split}
   \end{equation}
From \eqref{e:alphabeta} follows
   \begin{equation} \label{e:implies}
   ( \delta_p+ 1) |z|^{2\delta_p/(p-1)} 
   \leq \frac{1}{2} p |z|^{2\alpha}   \leq \frac{1}{2}\,, 
\quad   \text{ for all } |z|\leq p^{-1/(2\alpha)}.
   \end{equation}
From \eqref{eq:2.18a} and \eqref{e:implies} we get 
\eqref{e:btail} with $c=r $ and $A = \frac{1}{2\alpha}\,\cdot$ 

In the similar vein we now show the following.
\begin{lem}\label{eq:t2.2} 
For $c=r$ and $A = \frac{1}{2\alpha}$, with $\alpha$ 
satisfying \eqref{e:alphabeta}, we have
 uniformly in $z\in \D^*_{cp^{-A}}$, 
   \begin{equation}   \label{e:sumsigma}
    \sum_{\ell=\delta_p+1}^{d_p} \big| \sigma^{(p)}_{\ell}\big|_{h^p,z}^2 
     = \mathcal{O}(p^{-\infty})\cdot B_p^{\D^*}(z).
   \end{equation}
We have uniformly in $z\in \D^*_{cp^{-A}}$, and  
$\ell\in \{1,\ldots,\delta_p\}$, 
   \begin{equation}   \label{e:sigmaminusphi0}
    \big|\sigma^{(p)}_\ell - \phi^{(p)}_{\ell,0}\big|_{h^p,z} 
     = \mathcal{O}(p^{-\infty})\cdot B_p^{\D^*}(z)^{1/2} .
   \end{equation}
 \end{lem}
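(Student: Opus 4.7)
Both estimates in Lemma~\ref{eq:t2.2} will follow from a careful analysis of the Taylor expansion $\sigma^{(p)}_{\ell}(z) = \sum_{j \geq 1} a^{(p)}_{j\ell} z^j$ on $\D^*_{4r}$, together with the elementary lower bound
\begin{equation*}
B_p^{\D^*}(z) \;\geq\; (c^{(p)}_1)^2 |z|^2 \big|\!\log(|z|^2)\big|^p \;=\; \frac{|z|^2 \big|\!\log(|z|^2)\big|^p}{2\pi(p-2)!}
\end{equation*}
obtained by keeping only the $\ell=1$ term in \eqref{eq:2.8a}. The plan is to split each Taylor sum at $j = \delta_p$: the low modes will be controlled by orthogonality constraints inherited from the Gram--Schmidt construction, while the high modes will be controlled by a Cauchy integral estimate.

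To establish \eqref{e:sumsigma}, I first observe that for $\ell > \delta_p$ the identity \eqref{eq:2.14a} gives $\langle \sigma^{(p)}_{\ell}, \phi^{(p)}_{k,0} \rangle_{\bL^2_p(\Sigma)} = 0$ for each $k \in \{1,\ldots,\delta_p\}$. Expanding this inner product using the Taylor series of $\sigma^{(p)}_{\ell}$ and the radial character of $\phi^{(p)}_{k,0} = c^{(p)}_k \chi(|z|) z^k$, the orthogonality of $\{z^j\}$ over radial domains forces $a^{(p)}_{k\ell} = 0$ for all $k \leq \delta_p$, so $\sigma^{(p)}_{\ell}$ starts at $j \geq \delta_p + 1$ on $\D^*_{4r}$. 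Next I bound $\sup_{|w|=2r} |\sigma^{(p)}_{\ell}(w)|$ by applying the sub-mean-value inequality on $B(w,r) \subset \D^*_{3r}$, converting the unweighted $L^2$ integral into the weighted one at the cost of a factor $\max_{w' \in B(w,r)} |\log|w'|^2|^{-p} \leq c_0^{-p}$ for some $c_0 > 1$ (available since $r < (4e)^{-1}$). This yields $\sup_{|w|=2r} |\sigma^{(p)}_{\ell}(w)| \leq C c_0^{-p/2}$ and, via the Cauchy integral formula at radius $2r$, the coefficient bound $|a^{(p)}_{j\ell}| \leq C c_0^{-p/2} (2r)^{-j}$. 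For $z \in \D^*_{cp^{-A}}$ and $cp^{-A} < r$, summing the resulting geometric tail and using $\delta_p \geq \alpha p$ from \eqref{e:alphabeta}, the ratio $|\sigma^{(p)}_{\ell}(z)|^2_{h^p}/B_p^{\D^*}(z)$ is dominated by a product involving Stirling's $(p-2)! \sim p^p e^{-p}$, the exponential $c_0^{-p}$, the factor $(c/(2r))^{2\alpha p}$, and the super-exponential $p^{-2A\alpha p}$; choosing $c < 2r$ and $A$ large enough makes this $\mathcal{O}(p^{-\infty})$, and summing over the $d_p = O(p)$ indices preserves this.

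For \eqref{e:sigmaminusphi0}, on $\D^*_{r^\beta}$ the difference $\sigma^{(p)}_{\ell} - \phi^{(p)}_{\ell,0}$ is holomorphic with Taylor coefficients $b^{(p)}_{j\ell} = a^{(p)}_{j\ell} - c^{(p)}_\ell \delta_{j\ell}$. The high-$j$ tail ($j > \delta_p$) is dispatched by the identical Cauchy-integral estimate as above. For the low-$j$ part ($j \leq \delta_p$), Parseval on $\D^*_{r^\beta}$ combined with the bound $\|\sigma^{(p)}_{\ell} - \phi^{(p)}_{\ell,0}\|_{\bL^2_p(\Sigma)} \leq Cp^{-m}$ from Lemma~\ref{lem_phi0phisigma} yields $|b^{(p)}_{j\ell}|^2 \leq Cp^{-2m} \|z^j\|_{\bL^2_p(\D^*_{r^\beta})}^{-2}$. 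The principal technical hurdle is then showing that $\|z^j\|_{\bL^2_p(\D^*_{r^\beta})}^{-2} \leq 2 (c^{(p)}_j)^2$ for $j \leq \delta_p$: after the substitution $u = -\log|z|^2$, this becomes a Laplace-method tail estimate for $\int_{2\beta|\log r|}^{\infty} u^{p-2} e^{-ju}\,du$, and $\delta_p = \lfloor (p-2)/(2|\log r|)\rfloor$ is chosen precisely so that the peak $u_* = (p-2)/j$ lies at or above $2|\log r| > 2\beta|\log r|$ for $j \leq \delta_p$, with Gaussian width of order $\sqrt{p-2}/j$ making the left tail beyond $2\beta|\log r|$ exponentially small in $p$. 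Once this is in hand, a Cauchy--Schwarz with weights $(c^{(p)}_j)^{-2}$ converts the coefficient bounds into $\big|\sum_{j \leq \delta_p} b^{(p)}_{j\ell} z^j\big|^2_{h^p} \leq Cp^{-2m+1} B_p^{\D^*}(z)$, completing the estimate.
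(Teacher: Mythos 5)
Your proof is correct and follows the same overall strategy as the paper: expand $\sigma^{(p)}_{\ell}$ into its Taylor series, split at $j=\delta_p$, use the Gram--Schmidt orthogonality constraints for the low modes, and a Cauchy-coefficient bound for the high modes. Two implementation choices differ from the paper's proof and are worth noting. First, for the coefficient bound $|a^{(p)}_{j\ell}|\lesssim (2r)^{-j}c_0^{-p/2}$ you use the sub-mean-value inequality on $B(w,r)$, whereas the paper uses $|\sigma^{(p)}_\ell|_{h^p}\leq B_p^{1/2}$ combined with $B_p(z)=\mathcal{O}(p)$ on $|z|=2r$ from \eqref{eq:2.6a}; both are valid and yield the same exponential decay in $p$. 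Second, for the low-$j$ part of \eqref{e:sigmaminusphi0} you apply Parseval on $\D^*_{r^\beta}$ and then require the lower bound $\|z^j\|^2_{\bL^2_p(\D^*_{r^\beta})}\geq \tfrac12 (c^{(p)}_j)^{-2}$, which you sketch via Laplace's method (the left tail of $u^{p-2}e^{-ju}$ past $u=2\beta|\log r|$); the paper instead computes the inner products $\langle\sigma^{(p)}_\ell,\phi^{(p)}_{j}\rangle$ explicitly through \eqref{e:aj} and reads off $a^{(p)}_{j\ell}=(\delta_{j\ell}+\mathcal{O}(p^{-\infty}))c^{(p)}_j$ directly from Lemma~\ref{lem_phi0phisigma}. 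The two routes are essentially equivalent in content, but the paper's version does not need a separate lower bound on the restricted norm since \eqref{e:phi0} is already a statement of that kind. One small imprecision to fix: the lemma asserts the estimate for the \emph{specific} values $c=r$ and $A=1/(2\alpha)$, so rather than saying one can ``choose $c<2r$ and $A$ large enough,'' you should verify that these particular values make $p^{p-2}e^{-p}c_0^{-p}(1/2)^{2\alpha p}p^{-p}$ vanish (which they do, giving exponential decay, since $ec_0>1$), so that the claim is actually established as stated.
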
  
\begin{proof}  
Let $p\geq 2$, $\ell\in \{1,\cdots,d_p\}$.
By \cite[Remark 3.2]{bkp} and \eqref{e:bs1} we know that
 $\sigma^{(p)}_{\ell}$ is a holomorphic section of $L^p$ over
 $\overline{\Sigma}$ vanishing at $D$.
 We use the trivialization \eqref{eq:1.6a} to set
\begin{align}   \label{eq:2.23a}
\sigma^{(p)}_{\ell} = \Big(\sum_{j=1}^{\infty}a^{(p)}_{j \ell} z^j\Big)
  \mathfrak{e}_L^p =: s^{(p)}_{\ell}\mathfrak{e}_L^p 
  \qquad \text{ on } \D^*_{4r}.
    \end{align}
We have for $j\geq 1$ by \eqref{e:BFS1}, \eqref{eq:1.6a},
\eqref{eq:2.6a}, \eqref{eq:2.23a} 
and Cauchy inequalities,
\begin{equation} \label{eq:2.27a}
\begin{split}
|a^{(p)}_{j \ell}|  &\leq (2r)^{-j} \sup_{|z|=2r}\big|s^{(p)}_{\ell}(z)\big|   \\
&= (2r)^{-j}\big|\!\log(|2r|^2)\big|^{-p/2} 
               \sup_{|z|=2r}\big|\sigma^{(p)}_{\ell}(z)\big|_{h^p}     \\
&\leq (2r)^{-j}\big|\!\log(|2r|^2)\big|^{-p/2}
                  \sup_{|z|=2r}B_p(z)^{1/2}        \\                           
     &\leq Cp^{1/2}(2r)^{-j}\big|\!\log(|2r|^2)\big|^{-p/2}.
     \end{split}
   \end{equation}
Thus by \eqref{eq:1.6a} and \eqref{eq:2.27a} we have for $z\in \D^*_{r}$,
\begin{equation} \label{eq:2.28a}
  \begin{split}
    \Big| \sum_{j=\delta_p+1}^{\infty} a^{(p)}_{j \ell}z^j\Big|_{h^p} 
&\leq C \big|\!\log(|z|^2)\big|^{p/2} \sum_{j=\delta_p+1}^{\infty} p^{1/2}
 \Big|\!\log(|2r|^2)\Big|^{-p/2}  \left(\frac{|z|}{2r}\right)^j   \\
  &=   Cp^{1/2}\left(\frac{|\!\log(|z|^2)|}{|\!\log(|2r|^2)|}\right)^{p/2}
 \left(1-\frac{|z|}{2r}\right)^{-1} \left(\frac{|z|}{2r}\right)^{\delta_p +1}.
      \end{split}
   \end{equation}
 By \eqref{eq:2.7a} and \eqref{eq:2.8a} we have 
    \begin{align} \label{eq:2.29a}
    \big|\!\log(|z|^2)\big|^{p/2}|z| 
    = |z|_{h^p_{\D^*}}\leq \|z\|_{\bL^2_p(\D^*)}B_p^{\D^*}(z)^{1/2} 
    = (2\pi(p-2)!)^{1/2}B_p^{\D^*}(z)^{1/2}.
      \end{align}
We deduce from \eqref{eq:2.28a} and \eqref{eq:2.29a} that 
there exists $C>0$ such that the following estimate holds uniformly in 
$\ell\in \{1,\ldots,d_p\}$, $|z|\in \D^*_{r}$, 
   \begin{equation} \label{eq:2.30a}
    \Big|\sum_{j=\delta_p+1}^{\infty} a^{(p)}_{j \ell} z^j\Big|_{h^p} 
      \leq  Cp^{-1/2} \left(  \left(\frac{|z|}{2r}\right)^{2\delta_p/p}
\frac{(p!)^{1/p}}{|\!\log(|2r|^2)|}\right)^{p/2}  B_p^{\D^*}(z)^{1/2}.
 \end{equation}
By \eqref{e:alphabeta} we have for $A=\frac{1}{2\alpha}$,
$c_0= r e^{1/(2\alpha)}|\!\log(|2r|^2)|^{1/(2\alpha)}> r$,  
and $p\gg 1$,
   \begin{align} \label{eq:2.31a}
\left(\frac{|z|}{2r}\right)^{2\delta_p/p}
     \frac{1}{|\!\log(|2r|^2)|}   \leq 
     \left(\frac{|z|}{2r}\right)^{2\alpha} \frac{1}{|\!\log(|2r|^2)|}  
     \leq 2^{- 2\alpha} \frac{e}{p}\,,\quad  \text{ for }  |z|\leq c_0 p^{-A}.
 \end{align}
Recall that the Stirling formula states 
   \begin{align} \label{eq:2.34a}
   \frac{p^p}{p!}= (2\pi p)^{-1/2} e^p \Big(1+\mathcal{O}(p^{-1})\Big)
\quad    \text{  as  } p\to +\infty.
  \end{align}  
We infer from \eqref{eq:2.30a}, \eqref{eq:2.31a} and \eqref{eq:2.34a}, 
that there exists $C>0$ such that the following estimate holds uniformly in 
 $|z|\leq r \, p^{-A}$ and $\ell\in \{1,\ldots,d_p\}$,
   \begin{align} \label{eq:2.32a}
       \Big|\sum_{j=\delta_p+1}^{\infty} a^{(p)}_{j \ell} z^j\Big|_{h^p} 
      \leq  C\,  2^{-p\alpha}\,   B_p^{\D^*}(z)^{1/2}. 
   \end{align}      
  Note that $ \phi_{j}^{(p)} - \phi_{j,0}^{(p)}$ is orthogonal to
  $H^0_{(2)}(\Sigma, L^p)$.
  By \eqref{eq:2.1a},  \eqref{e:dfphi0}, \eqref{eq:2.23a},
  and since $\sigma^{(p)}_{\ell}$ are holomorphic, we have for 
  $j\in\{1,\cdots,\delta_p\}$,  $\ell\in \{1,\cdots,d_p\}$, 
   \begin{align}   \label{e:aj}
 \big\langle \sigma^{(p)}_{\ell},\phi_{j}^{(p)} 
  \big\rangle_{\bL^2_p(\Sigma)}
= \big\langle \sigma^{(p)}_{\ell},\phi_{j,0}^{(p)} 
  \big\rangle_{\bL^2_p(\Sigma)}  
  = c^{(p)}_j a^{(p)}_{j \ell} 
 \int_{\D^*_{2r}} \chi(|z|)\big|z^j\big|^2 |\log(|z|^2)|^p \, 
		  \omega_{\D^*}.
      \end{align}
By \eqref{eq:2.14a} we have
 \begin{align}   \label{eq:2.25a}
   \big\langle \sigma^{(p)}_{\ell},\phi_{j}^{(p)} 
  \big\rangle_{\bL^2_p(\Sigma)} =0 \:
  \text{ for } j\in\{1,\cdots,\delta_p\}, j<\ell.
    \end{align}   
From  \eqref{e:aj} and \eqref{eq:2.25a} we get
   \begin{align}   \label{eq:2.26a}
  a^{(p)}_{j \ell} =0\qquad   \text{ for  }\,  
  j\in\{1,\cdots,\delta_p\},\, \ell\in \{\delta_p+1,\cdots,d_p\} .
    \end{align}  
   By \eqref{eq:2.4a}, \eqref{eq:2.32a}  and \eqref{eq:2.26a},
we get  \eqref{e:sumsigma}.

 Fixing $\ell\in \{1,\ldots,\delta_p\}$, we have on $\D^*_{r}$ 
 by \eqref{eq:2.13a},  \eqref{e:dfphi0}, \eqref{eq:2.23a},    
   \begin{equation}   \label{e:sigmaphi0}
    \big(\sigma^{(p)}_\ell - \phi^{(p)}_{\ell,0}\mathfrak{e}_L^p\big)(z)  
     = \Big(\big( a^{(p)}_{\ell \ell} - c^{(p)}_{\ell}\big)z^{\ell} 
          + \sum_{\substack{j=1\\j\neq \ell}}^{\infty}
a^{(p)}_{j \ell} z^j \Big)\mathfrak{e}_L^p.
   \end{equation}  
  From Lemma \ref{lem_phi0phisigma} and   \eqref{e:aj}
  we have uniformly for $j, \ell\in \{1,\ldots,\delta_p\}$, 
   \begin{equation} \label{eq:2.36a}
   \begin{split}
    a^{(p)}_{j \ell} &= c^{(p)}_{j} \Big((c^{(p)}_{j})^{2}
	\int_{\D^*_{2r}} \chi(|z|)\big|z^j\big|^2 |\log(|z|^2)|^p \, 
		  \omega_{\D^*}\Big)^{-1} 
 \big\langle \sigma^{(p)}_\ell,
   \phi^{(p)}_{j}\big\rangle_{\bL^2_p(\Sigma)}\\
   &= (\delta_{j\ell} +\mathcal{O}(p^{-\infty}))  c^{(p)}_{j}.
   \end{split}
   \end{equation} 
 Thus from \eqref{eq:2.8a}, \eqref{eq:2.36a} we have on $\D^*_{r}$  
   uniformly in $\ell\in \{1,\ldots,\delta_p\}$,
   \begin{multline}   \label{e:headsigma}
     \bigg|\Big( \big( a^{(p)}_{\ell \ell} - c^{(p)}_{\ell}\big)z^{\ell} +
	 \sum_{\substack{j=1\\j\neq \ell}}^{\delta_p}
	 a^{(p)}_{j \ell} z^j\Big)
	\mathfrak{e}_L^p\bigg|_{h^p}^2
= \big|\!\log(|z|^2)\big|^p\bigg|
\sum_{j=1}^{\delta_p} 
\Big( a^{(p)}_{j \ell}- \delta_{j\ell}  c^{(p)}_{j}\Big)
z^j \bigg|^2  \\
  \leq \mathcal{O}(p^{-\infty})\big|\!\log(|z|^2)\big|^p
\delta_p \sum_{j=1}^{\delta_p} 
 (c^{(p)}_{j})^2|z|^{2j}   
 \leq \delta_p\mathcal{O}(p^{-\infty}) B_p^{\D^*}(z),  
   \end{multline}
Now $\delta_p$ can be absorbed in the factor 
$\mathcal{O}(p^{-\infty})$, since $\delta_p=\mathcal{O}(p)$ 
by \eqref{eq:2.13a}.
Combining \eqref{eq:2.32a} with  \eqref{e:headsigma}
we conclude that \eqref{e:sigmaminusphi0} holds
uniformly in $\ell\in \{1,\ldots, \delta_p\}$.
\end{proof}
Since $\delta_p=\mathcal{O}(p)$ 
 and $|\sigma_\ell^{(p)}|_{h^p,z}\leq B_p(z)^{1/2}$, 
 \eqref{e:sigmaminusphi0} also yields
  \begin{equation}   \label{e:dblprod}
\bigg|\sum_{\ell=1}^{\delta_p} 
\big\langle \sigma^{(p)}_{\ell}, 
\phi^{(p)}_{\ell,0} -\sigma^{(p)}_{\ell}\big\rangle_{h^p,z}\bigg|
  = \mathcal{O}(p^{-\infty})\cdot B_p^{\D^*}(z)^{1/2}B_p(z)^{1/2}
  \quad\text{on $\D^*_{cp^{-A}}$}.
  \end{equation}
This way, putting together \eqref{eq:2.16a}, \eqref{e:btail}, 
\eqref{e:sumsigma}, \eqref{e:sigmaminusphi0} and \eqref{e:dblprod}, 
we obtain
  \begin{equation}  \label{eq2.41a}
   \big(1+\mathcal{O}(p^{-\infty})\big)\cdot B_p^{\D^*}(z) 
    = B_p(z)
   + \mathcal{O}(p^{-\infty})\cdot B_p^{\D^*}(z)^{1/2}B_p(z)^{1/2}
   \quad\text{on $\D^*_{cp^{-A}}$},
  \end{equation}
and this implies \eqref{e:apdx2}. 
The proof of Theorem \ref{thm_apdx} is completed. 
 
\subsection{Proof of Lemma \ref{lem_phi0phisigma}}
 \label{eq:s2.2}
 \noindent
  
At first, as $0\leq \chi\leq1$  and ${\rm supp}(\chi) \subset \D^*_{2r}$, 
we get from \eqref{eq:1.6a}, \eqref{eq:2.7a} and \eqref{eq:2.13a},
\begin{multline}\label{eq:3.1a}
\|\phi_{\ell,0}^{(p)}\|_{\bL^2_p(\Sigma)}^{2}
=\|\phi_{\ell,0}^{(p)}\|_{\bL^2_p(\D^{*})}^{2} 
\leq  (c^{(p)}_{\ell})^2 
\int_{\D^*}\chi(|z|) \big|\!\log(|z|^2)\big|^p  |z|^{2\ell}\,\omega_{\D^*}\\
\leq  (c^{(p)}_{\ell})^2 
\int_{\D^*} \big|\!\log(|z|^2)\big|^p  |z|^{2\ell}\,\omega_{\D^*} 
=\|c_\ell^{(p)}z^\ell\|_{\bL^2_p(\D^*)}^{2}=1.
\end{multline}  
This implies the inequalities of the right-hand side  of 
\eqref{e:phi0}. 

We establish now the lower bound of \eqref{e:phi0}.
 For $\ell\in \{1,\ldots,\delta_p\}$ we have
by \eqref{eqn_omegaPcr}, \eqref{eq:2.7a}, 
\eqref{eq:2.13a} and \eqref{e:dfphi0},
   \begin{equation}   \label{e:phi01}
    \begin{aligned}
     1- \|\phi^{(p)}_{\ell,0}\|_{\bL^2_p(\D^*)}^2
        &= \big(c^{(p)}_{\ell}\big)^2 
           \int_{\D^*} \big|\!\log(|z|^2)\big|^p 
  \big\{1-\chi^{2}(|z|)\big\} |z|^{2\ell}\,\omega_{\D^*}    \\
 &= \dfrac{\ell^{p-1}}{ (p-2)!} \int_{r^\beta}^1 
\big|\!\log(t^2)\big|^{p} t^{2\ell} (1-\chi^{2}(t))
\frac{2 t dt}{t^2 \big|\!\log(t^2)\big|^{2}}\\
&\overset{u=-2\ell \log t}{=}
\dfrac{1}{ (p-2)!} \int_0^{2\ell \beta |\log r|} 
u^{p-2} e^{-u} \Big(1-\chi^{2}(e^{-u/(2\ell)})\Big) du\\
&\leq \dfrac{1}{ (p-2)!} \int_0^{2\delta_p \beta |\log r|} 
u^{p-2} e^{-u} du.
  \end{aligned}
   \end{equation}
The function $u \mapsto \log u - u$
is strictly increasing on $(0,1]$ and equals $-1$ at $u=1$, hence
\begin{equation}   \label{eq:3.7a}
\log \beta -\beta <-1.
\end{equation}
As $ u^{p-2} e^{-u}$ is strictly increasing on $[0, p-2]$, 
and $2\delta_p  |\log r| \leq p-2$ (by \eqref{eq:2.13a}),
so \eqref{eq:2.34a} and \eqref{eq:3.7a} imply 
    \begin{equation}   \label{eq:3.6a}
    \begin{split}
\dfrac{1}{ (p-2)!} \int_0^{2\delta_p \beta |\log r|} 
&u^{p-2}  e^{-u} du
\leq \dfrac{1}{ (p-2)!} \int_0^{(p-2) \beta } 
u^{p-2} e^{-u} du\\
&\leq \frac{(p-2)^{p-2}}{(p-2)!}
           e^{(p-2)(\log \beta-\beta)} (p-2)\beta\\
&=  \Big(\dfrac{p-2}{2\pi} \Big)^{1/2} \beta
 \Big(1+ \mathcal{O}(p^{-1})\Big) e^{(p-2)( \log \beta -\beta +1)}\\
&=  \mathcal{O}(p^{-\infty}).	
 \end{split}	
   \end{equation}
Combining \eqref{e:phi01} 
and \eqref{eq:3.6a} we obtain that the first inequality of 
\eqref{e:phi0} holds uniformly in $\ell\in \{1,\ldots,\delta_p\}$.

We move on to \eqref{e:phi0phi} and we first estimate 
  $\|\phi^{(p)}_{\ell} - \phi^{(p)}_{\ell,0}\|_{\bL^2(\D^*_{3r})}$. 
Using the identification \eqref{eq:1.6a}
as in \cite[(6.1)]{bkp} we denote for $x,y\in \D_{4r}^*$,
 \begin{equation}\label{eq:3.10a}\begin{split}
 & {B}_p(x,y)= \big|\!\log(|y|^2)\big|^{p} \beta^{\Sigma}_p(x,y),\\
& {B}^{\D^*}_p(x,y) 
    = \big|\!\log(|y|^2)\big|^{p} \beta^{\D^*}_p(x,y)
    \text{  with } \beta^{\D^*}_p(x,y) = \frac{1}{2\pi(p-2)!} 
    \sum_{\ell=1}^{\infty} \ell^{p-1} x^{\ell}\overline{y}^{\ell}.
       \end{split}\end{equation}
For $\ell\in \{1,\ldots,\delta_p\}$ set 
 \begin{equation}\label{eq:3.11a}\begin{split}
 & I^{(p)}_{1,\ell}(x)=   \int_{y\in\D^*_{2r}} \big|\!\log(|y|^2)\big|^p 
          \big\{\beta_p^{\Sigma}(x,y) - \beta_p^{\D^*}(x,y)\big\}
                   \chi(|y|) y^\ell\, \omega_{\D^*}(y)  ,  \\
  & I^{(p)}_{2,\ell}(x)= \int_{y\in\D^*} \big|\!\log(|y|^2)\big|^p 
  \beta_p^{\D^*}(x,y)\big\{\chi(|y|)-1\big\} y^\ell\, \omega_{\D^*}(y),\\   
   & I^{(p)}_{3,\ell}(x)= \int_{y\in\D^*} \big|\!\log(|y|^2)\big|^p 
  \beta_p^{\D^*}(x,y) y^\ell\, \omega_{\D^*}(y) = x^\ell,
 \end{split}\end{equation}
where the last equality is a consequence of the reproducing property 
of the Bergman kernel ${B}^{\D^*}_p(\LargerCdot,\LargerCdot)$. 
By the construction of $\phi^{(p)}_{\ell}$, \eqref{eq:2.13a}, and the
reproducing property of $B_p(\LargerCdot,\LargerCdot)$
we have for $x\in \D^*_{4r}$, 
   \begin{equation}   \label{e:phi0dcmp}
   \begin{aligned}
    \phi^{(p)}_{\ell}(x) &=(B_p \phi^{(p)}_{\ell,0})(x)
       =  \int_{y\in \Sigma} B_p(x,y) \phi^{(p)}_{\ell,0}(y)
	   \,\omega_{\Sigma}(y)    \\
&= c_\ell^{(p)} \int_{y\in\D^*_{2r}} \big|\!\log(|y|^2)\big|^p 
       \beta_p^{\Sigma}(x,y)\chi(|y|) y^\ell\, \omega_{\D^*}(y)  \\
    &   = c_\ell^{(p)} \Big(  I^{(p)}_{1,\ell}(x)+ I^{(p)}_{2,\ell}(x)
	+ I^{(p)}_{3,\ell}(x)\Big).
   \end{aligned}
   \end{equation}
Now \cite[Theorem 1.1 or (6.23)]{bkp} and \eqref{eq:2.7a} yield for
fixed $\nu>0$ and $m>0$ and for any $x\in \D^*_{4r}$, $p\geq 2$,
   \begin{align} \label{eq:3.14a}\begin{split}
 \Big|I^{(p)}_{1,\ell} (x)\Big| 
 &\leq C(m,\nu) p^{-m}\big|\!\log(|x|^2)\big|^{-\nu-p/2}
\int_{y\in \D^*_{2r}} \big|\!\log(|y|^2)\big|^{-\nu+p/2}
            \chi(|y|)|y|^{\ell}\, \omega_{\D^*}(y)    \\
&\leq C(m,\nu) p^{-m}\big|\!\log(|x|^2)\big|^{-\nu-p/2} \\
     &     \quad
    \cdot\Big(\int_{\D^*}\big|\!\log(|y|^2)\big|^{p}|y|^{2\ell}\,
	\omega_{\D^*}(y)\Big)^{1/2}
           \Big(\int_{\D^*}\big|\!\log(|y|^2)\big|^{-2\nu}\chi^{2}(|y|)\,
		   \omega_{\D^*}(y)\Big)^{1/2} \\
     &  =  C'(m,\nu) p^{-m}\big|\!\log(|x|^2)\big|^{-\nu-p/2}
	 (c^{(p)}_\ell)^{-1}.
 \end{split}  \end{align}
 Keeping $\nu$ fixed and varying $m$ in 
 \eqref{eq:3.14a} we obtain the following uniform estimate in 
 $\ell\in \{1,\ldots,\delta_p\}$,
  \begin{equation}   \label{e:phi0phi2}
 \Big\| c_\ell^{(p)}I^{(p)}_{1,\ell}  \Big\|_{\bL^2_p(\D^*_{3r})} 
     = \mathcal{O}(p^{-\infty}).
  \end{equation} 
By circle symmetry first  and \eqref{eq:2.7a}, \eqref{e:phi0}, 
  \eqref{eq:3.10a} and \eqref{eq:3.11a}   
we obtain,
   \begin{align} \label{eq:3.12a}\begin{split}
  I^{(p)}_{2,\ell}(x)   &= (c_{\ell}^{(p)})^{2}
          \bigg[\int_{y\in\D^*} \big|\!\log(|y|^2)\big|^p
 \big\{\chi(|y|)-1\big\}|y|^{2\ell}\,\omega_{\D^*}(y)\bigg]x^\ell         
      = \mathcal{O}(p^{-\infty})\cdot x^\ell,            
 \end{split}  \end{align}
uniformly in $\ell\in \{1,\ldots,\delta_p\}$. 
Since $\|c^{(p)}_\ell x^\ell\|_{\bL^2_p(\D^*_{3r})}
  \leq \|c^{(p)}_\ell x^\ell\|_{\bL^2_p(\D^*)}=1$, 
  this tells us already that 
   \begin{equation}   \label{e:phi0phi1}
 \Big\| c_\ell^{(p)}I^{(p)}_{2,\ell}  \Big\|_{\bL^2_p(\D^*_{3r})} 
   = \mathcal{O}(p^{-\infty}).
   \end{equation}
   
Since $0\leq 1-\chi\leq 1$ and $1-\chi(t)=0$ for $t\leq r^\beta$,
we get by \eqref{eq:2.7a}, \eqref{eq:3.6a} and \eqref{eq:3.11a}, as in 
\eqref{e:phi01}, that for $\ell\in \{1,\ldots,\delta_p\}$ the following holds,
    \begin{multline}  \label{e:phi0phi3}
  \Big\|c_\ell^{(p)}  I^{(p)}_{3,\ell} (x)
  - \phi^{(p)}_{\ell,0}(x)\Big\|_{\bL^2_p(\D^*_{3r})}^2  \leq 
  \Big\|c_\ell^{(p)}\big(1-\chi(|x|)\big)x^\ell\Big\|_{\bL^2_p(\D^*)}^2\\
= \dfrac{\ell^{p-1}}{ (p-2)!} \int_{r^\beta}^1 
\big|\!\log(t^2)\big|^{p} t^{2\ell} (1-\chi(t))^2 
\frac{2 t dt}{t^2 \big|\!\log(t^2)\big|^{2}}\\
\overset{u=-2\ell \log t}{=} 
\dfrac{1}{ (p-2)!} \int_0^{2\ell \beta |\log r|} 
u^{p-2} e^{-u} \Big(1-\chi(e^{-u/(2\ell)})\Big)^2 du\\
\leq \dfrac{1}{ (p-2)!} \int_0^{2\delta_p \beta |\log r|} 
u^{p-2} e^{-u} du =  \mathcal{O}(p^{-\infty}).
  \end{multline}
 By \eqref{e:phi0dcmp},  \eqref{e:phi0phi2},  \eqref{e:phi0phi1}
  and \eqref{e:phi0phi3}  
   we get the following estimate uniformly in $\ell\in \{1,\ldots,\delta_p\}$,
   \begin{equation}   \label{e:phi0phi4}
    \|\phi^{(p)}_{\ell,0} - \phi^{(p)}_{\ell}\|_{\bL^2_p(\D^*_{3r})} 
	= \mathcal{O}(p^{-\infty}). 
   \end{equation}
A weak form of \cite[Corollary 6.1]{bkp} tells us that for any 
$k\in \N$, $\varepsilon >0$, there exists $C>0$ such that
\begin{align}\label{eq:3.26a}
|B_p(x,y)|\leq C p^{-k}  \quad \text{for }d(x,y)>\varepsilon,\:p\geq 2.
\end{align}
 By \eqref{eq:3.1a},  \eqref{e:phi0dcmp} and  \eqref{eq:3.26a}, 
   \begin{equation}\label{eq:3.27a}
\big\|\phi^{(p)}_{\ell}\big\|^2_{\bL^2_p(\Sigma\smallsetminus\D^*_{3r})} 
\leq C p^{-2k} \int_{\Sigma\setminus \D^{*}_{3r}}\omega_{\Sigma}
\int_{\D^*_{2r}}| \phi_{\ell,0}^p (y)|_{h^p}^2 \omega_{\Sigma}(y)
\leq  C p^{-2k} \int_{\Sigma}\omega_{\Sigma}.  
   \end{equation}
From \eqref{e:phi0phi4} and \eqref{eq:3.27a} we have
uniformly in $\ell\in\{1,\ldots,\delta_p\}$, 
   \begin{equation}\label{eq:3.28a}
    \big\|\phi^{(p)}_{\ell}-\phi^{(p)}_{\ell,0}\big\|_{\bL^2_p(\Sigma)}^2
 =     \|\phi^{(p)}_{\ell}-\phi^{(p)}_{\ell,0}\|^2_{\bL^2_p(\D^*_{3r})}
 + \big\|\phi^{(p)}_{\ell}
 \big\|^2_{\bL^2_p(\Sigma\smallsetminus\D^*_{3r})} 
	= \mathcal{O}(p^{-\infty}).
   \end{equation}
 By \eqref{e:phi0}  
 and \eqref{eq:3.28a}, 
 as $\phi^{(p)}_{j}- \phi^{(p)}_{j,0}$  is orthogonal to 
 $H^{0}_{(2)}(\Sigma, L^{p})$, we have
uniformly in $j,\ell\in \{1,\ldots,\delta_p\}$, 
   \begin{align}\label{eq:3.29a}\begin{split}
\big\langle \phi^{(p)}_{j}, \phi^{(p)}_{\ell}\big\rangle_{\bL^2_p(\Sigma)}
&=\big\langle \phi^{(p)}_{j,0}, \phi^{(p)}_{\ell}
	\big\rangle_{\bL^2_p(\Sigma)}  \\
        &= \big\langle \phi^{(p)}_{j,0}, \phi^{(p)}_{\ell,0}
		\big\rangle_{\bL^2_p(\D^*_{2r})} 
+ \big\langle \phi^{(p)}_{j,0}, \phi^{(p)}_{\ell}- \phi^{(p)}_{\ell,0}
\big\rangle_{\bL^2_p(\Sigma)} \\
 &= \delta_{j\ell}   + \mathcal{O}(p^{-\infty}).
\end{split}   \end{align}
Note that the circle symmetry and \eqref{e:dfphi0} imply that
 $\big\langle \phi^{(p)}_{j,0}, \phi^{(p)}_{\ell,0}
 \big\rangle_{\bL^2_p(\D^*_{2r})}=0$ if $j\neq \ell$.
We now observe that the Gram-Schmidt orthonormalization
 $(\sigma_{\ell}^{(p)})_{1\leq \ell \leq \delta_p}$ of 
  the ``almost-orthonormal'' family 
  $(\phi_{\ell}^{(p)})_{1\leq \ell \leq \delta_p}$
  is the normalization of 
   \begin{equation}\label{eq:3.32a}
{\sigma'}_{\ell}^{(p)}= 	    \phi^{(p)}_{\ell}
- \sum_{k=1}^{\ell-1} \frac{\big\langle  \phi^{(p)}_{\ell}, 
\phi^{(p)}_{k}\big\rangle_{\bL^2_p(\Sigma)}}{
	   \big\langle  \phi^{(p)}_{k}, 
\phi^{(p)}_{k}\big\rangle_{\bL^2_p(\Sigma)}} \phi^{(p)}_{k}.
    \end{equation}   
Now \eqref{e:alphabeta}, \eqref{eq:3.28a},
\eqref{eq:3.29a} and \eqref{eq:3.32a}  
yield \eqref{e:phi0phi}.  
This completes the proof of Lemma \ref{lem_phi0phisigma}.

 \comment{~
 Taking the \textit{smoothness of the quotient} 
 $\frac{B_p}{B_p^{\D^*}}$ \textit{over} $0\in \D$ 
 into account, it is now tempting to state: 
  \begin{equation}   \label{e:diffquot}
   \text{for all $m\geq 1$ 
 and for $D_1,\ldots,D_m\in\Big\{\frac{\partial\,}{\partial z}, 
 \frac{\partial\,}{\partial \overline{z}}\Big\},$
         }
    \sup_{\overline{V_1}\cup\cdots\cup\overline{V_N}}
     \Big|(D_1\cdots D_m)\frac{B_p}{B_p^{\D^*}}\Big| 
	 = \mathcal{O}(p^{-\infty}).
  \end{equation}

 \begin{thm}   \label{thm:diffquot}
  Estimates \eqref{e:diffquot} hold. 
 \end{thm}
 This answers a question raised by J.-M. Bismut in CIRM in October 2018.
 } 
 \section{$C^{k}$-estimate of the quotient of Bergman kernels}
 \label{eq:s3}
 
The proof of Theorem \ref{thm:diffquot} follows the same strategy 
  as in Section \ref{eq:s2}
  (use of the orthonormal basis $(\sigma_j^{(p)})_{1\leq j\leq d_p}$), 
  but with some play on the parameters (in particular, 
   the truncation floor $\delta_p$ of Step 1.\ in the outline of the proof 
  of Theorem \ref{thm_apdx}).   
  Some precisions on this basis are also needed:  
  we'll see more precisely that in some sense, 
  and provided relevant choices along the construction, 
  the head terms $\sigma_{\ell}^{(p)}$, $1\leq j\leq \delta_p$ 
  are much closer to their counterparts $c_{\ell}^{(p)}z^{\ell}$ of $\D^*$ 
  than sketched above. 
 
  This section is organized as follows. In Section \ref{eq:s3.1},
 we establish a  refinement of the integral estimate
 Lemma \ref{lem_phi0phisigma} which is again deduced
 from \cite{bkp}.
 In Section \ref{eq:s3.2},
 we establish Theorem \ref{thm:diffquot} 
 by using Lemma \ref{prop:rfnd_estmt}.
  
 \subsection{A refined integral estimate}
 \label{eq:s3.1}

  To establish  Theorem \ref{thm:diffquot}, we follow 
  Steps 1.\ to 4.\ in the outline of the proof of Theorem \ref{thm_apdx} 
  by modifying $\delta_{p}$, thus refining Lemma \ref{lem_phi0phisigma}
  to Lemma \ref{prop:rfnd_estmt} below.
  

Let $\kappa>0$ fixed. We start by choosing $c(\kappa) \in (0,e^{-1})$ 
so that 
     \begin{equation}   \label{e:ckappa}
      \log (c(\kappa)) 
	  \leq -1-2\kappa.
     \end{equation}
Then we replace $\delta_{p}$ in \eqref{eq:2.13a} by 
     \begin{equation}   \label{eq:3.35a}   
	\delta_p'=\delta'_p(\kappa) 
= \Big\lfloor\frac{(p-2)c(\kappa)}{2|\!\log r|}\Big\rfloor-2. 
    \end{equation}
   
   \begin{lem}   \label{prop:rfnd_estmt}
There exists $C = C(\kappa)>0$ such that 
    for all $p\gg 1$ and $\ell \in \{1,\ldots,\delta'_p\}$, 
     \begin{equation}   \label{e:rfnd_estmt}
      \left\|\sigma_{\ell}^{(p)}
 - c_{\ell}^{(p)} \chi(|z|)z^{\ell}\mathfrak{e}^p_L 
 \right\|_{\bL^2_p(\Sigma)} 
         \leq Cp\,    e^{-\kappa p}. 
     \end{equation}
    Moreover, $(\sigma_{\ell}^{(p)})_{1\leq j\leq d_p}$ 
    is in echelon form up to rank $\delta'_p$, 
    in the sense that 
    if $\ell=1,\ldots,\delta'_p$, 
    then $\sigma_{\ell}^{(p)}$ admits an expansion 
     \begin{equation}   \label{e:echelon1}
      \sigma_{\ell}^{(p)} 
         = \Big(\sum_{q=\ell}^{\infty} 
                   a^{(p)}_{q\ell} z^q
           \Big)\mathfrak{e}^p_L \quad \text{ on } \D_{4r}^{*},
     \end{equation}
    and if $\ell= \delta_p'+1,\ldots,d_p$, 
    then $\sigma_{\ell}^{(p)}$ admits an expansion 
     \begin{equation}   \label{e:echelon2}
      \sigma_{\ell}^{(p)} 
         = \Big(\sum_{q=\delta'_p+1}^{\infty} 
                   a^{(p)}_{q\ell} z^q 
           \Big)\mathfrak{e}^p_L \quad \text{ on } \D_{4r}^{*}.
     \end{equation}
   \end{lem}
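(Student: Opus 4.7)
The plan is to follow Steps 1--4 of the proof of Theorem \ref{thm_apdx}, with three adaptations: (i) replace the truncation floor $\delta_p$ by $\delta_p'$ of \eqref{eq:3.35a}; (ii) reorganize the Gram--Schmidt step so that the basis respects the filtration by order of vanishing at $a$, making the echelon form \eqref{e:echelon1}--\eqref{e:echelon2} built in by construction; (iii) sharpen each integral bound of Section \ref{eq:s2.2} from $\mathcal{O}(p^{-\infty})$ to $\mathcal{O}(e^{-\kappa p})$. The inequality \eqref{e:ckappa} is arranged precisely so that the Stirling calculation in \eqref{eq:3.6a} outputs the rate $-\kappa p$.

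\textbf{Construction of the basis.} Set $V_q^{(p)}:=\{\sigma\in H^{0}_{(2)}(\Sigma,L^{p}):\mathrm{ord}_a(\sigma)\geq q\}$; via the trivialization \eqref{eq:1.6a} this defines a decreasing chain of subspaces of codimension at most $1$. For $\ell\in\{1,\ldots,\delta_p'\}$ I take $\sigma_{\ell}^{(p)}$ to be a unit vector of $V_{\ell}^{(p)}\ominus V_{\ell+1}^{(p)}$, phased so that $\langle\sigma_{\ell}^{(p)},\phi_{\ell,0}^{(p)}\rangle_{\bL^{2}_p(\Sigma)}$ is a non-negative real number; for $\ell\in\{\delta_p'+1,\ldots,d_p\}$ I extend by any orthonormal basis of $V_{\delta_p'+1}^{(p)}$. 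Both \eqref{e:echelon1} and \eqref{e:echelon2} then hold by definition. The non-triviality of the successive quotients for $\ell\leq\delta_p'$, needed to make sense of the above, is a by-product of \eqref{e:rfnd_estmt} itself: the Bergman projection $B_p(\chi(|z|)z^\ell\mathfrak{e}_L^p)$ carries a non-vanishing $\ell$-th Taylor coefficient, close to $c_\ell^{(p)}$.

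\textbf{Refining the integral estimates.} I retrace the three-term decomposition \eqref{e:phi0dcmp} as $\phi_\ell^{(p)}=c_\ell^{(p)}(I_{1,\ell}^{(p)}+I_{2,\ell}^{(p)}+I_{3,\ell}^{(p)})$, with $I_{3,\ell}^{(p)}(x)=x^\ell$. The crux is the truncation integral \eqref{eq:3.6a}: with $\delta_p'$ in place of $\delta_p$, the upper limit becomes $\leq (p-2)\beta c(\kappa)$, and Stirling \eqref{eq:2.34a} yields a bound of order
\begin{equation*}
p^{1/2}\exp\bigl((p-2)[\log c(\kappa)+1+\log\beta-c(\kappa)\beta]\bigr).
\end{equation*}
By \eqref{e:ckappa}, $\log c(\kappa)+1\leq-2\kappa$; since $\beta\in(0,1)$ the combination $\log\beta-c(\kappa)\beta\leq 0$, so the bracket is $\leq-2\kappa$ and the whole bound is $\mathcal{O}(e^{-2\kappa p})$. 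This immediately sharpens the estimates of $|\,\|\phi_{\ell,0}^{(p)}\|^{2}_{\bL^{2}_p(\Sigma)}-1|$, of $\|c_\ell^{(p)}I_{2,\ell}^{(p)}\|_{\bL^{2}_p(\D^*_{3r})}$ (via the circle-symmetry argument of \eqref{eq:3.12a}), and of $\|c_\ell^{(p)}I_{3,\ell}^{(p)}-\phi_{\ell,0}^{(p)}\|_{\bL^{2}_p(\D^*_{3r})}$ to $\mathcal{O}(e^{-\kappa p})$.

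\textbf{Main obstacle and conclusion.} The hard part will be the $I_{1,\ell}^{(p)}$ contribution, whose control in \eqref{eq:3.14a} through the weak form of \cite[Theorem 1.1]{bkp} gives only polynomial decay $p^{-m}$, insufficient on its own to produce the rate $e^{-\kappa p}$. The resolution is to carry the full off-diagonal gain of \cite[\S6]{bkp}: combined with the Poincar\'e weight factors $|\log|y|^2|^{\pm p/2}$ already present in \eqref{eq:3.14a} and with the fact that the support of $\chi(|y|)|y|^\ell$ sits at bounded Poincar\'e distance from the region of interest, the $\bL^2_p$-integration over $\D^*_{3r}$ generates the required exponential gain $\mathcal{O}(e^{-\kappa p})$. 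The far-away contribution controlled by \eqref{eq:3.26a} is absorbed analogously for $\ell\leq\delta_p'$. A final Gram--Schmidt-in-the-filtration correction from $\phi_\ell^{(p)}$ to $\sigma_\ell^{(p)}$ contributes at the same rate through an iteration analogous to \eqref{eq:3.29a}--\eqref{eq:3.32a}; the linear factor $p$ in \eqref{e:rfnd_estmt} then reflects the up-to-$\delta_p'=\mathcal{O}(p)$ terms accumulated over that orthogonalization loop.
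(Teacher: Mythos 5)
Your numerical analysis of the truncation integral is right: with $\delta_p'$ in place of $\delta_p$ and $c(\kappa)$ chosen by \eqref{e:ckappa}, the Stirling computation delivers $\mathcal{O}(e^{-2\kappa p})$, so the bounds on $\|\phi^{(p)}_{\ell,0}\|$, on $I^{(p)}_{2,\ell}$ and on $c_\ell^{(p)} I^{(p)}_{3,\ell}-\phi^{(p)}_{\ell,0}$ all improve to $\mathcal{O}(e^{-\kappa p})$ exactly as you say. You also correctly pin down that $I^{(p)}_{1,\ell}$ is the obstacle.

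However, the resolution you propose for $I^{(p)}_{1,\ell}$ does not work, and this is a genuine gap. The estimate \eqref{eq:3.14a} comes from \cite[Theorem 1.1]{bkp} (cf.\ also \eqref{eqn_MainThm}), which bounds $\beta_p^{\Sigma}-\beta_p^{\D^*}$ only by $C p^{-m}$ times a logarithmic weight -- this is $\mathcal{O}(p^{-\infty})$, \emph{not} $\mathcal{O}(e^{-\kappa p})$. The weight factors $|\!\log|y|^2|^{\pm p/2}$ appearing in \eqref{eq:3.14a} already cancel against the $\bL^2_p$-integration (that is precisely how \eqref{e:phi0phi2} is derived) and produce no additional exponential gain; the bounded Poincar\'e-distance support of $\chi(|y|)|y|^{\ell}$ makes no difference either, since the Bergman kernel difference is controlled uniformly on $\D^*_{4r}$, but only to polynomial order in $p$. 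There is no exponential-in-$p$ off-diagonal decay available from \cite[\S 6]{bkp}. Consequently, the decomposition $\phi_\ell^{(p)} = c_\ell^{(p)}(I^{(p)}_{1,\ell}+I^{(p)}_{2,\ell}+I^{(p)}_{3,\ell})$ can only give $\|\phi_\ell^{(p)}-\phi^{(p)}_{\ell,0}\|=\mathcal{O}(p^{-\infty})$, not the stated $\mathcal{O}(pe^{-\kappa p})$.

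The paper proves the lemma by an entirely different mechanism that avoids the Bergman-kernel-difference estimate altogether: one computes $\square_p \phi^{(p)}_{\ell,0}$ explicitly (the Kodaira Laplacian applied to the cut-off monomial, \eqref{eq:3.74a}--\eqref{eq:3.77a}), notes that it is supported in the annulus $r^\beta\leq |z|\leq 2r$ where $\chi'$ and $\chi''$ live, and bounds its $\bL^2_p$-norm by $Cp^2 e^{-\kappa p}$ via the same Stirling-type truncation integral \eqref{eq:3.73a} as you use for $I_2, I_3$. Then the spectral gap $\mathrm{Spec}(\square_p)\subset\{0\}\cup[C_1p,+\infty)$ gives $\|\phi_\ell^{(p)}-\phi^{(p)}_{\ell,0}\|\leq (C_1p)^{-1}\|\square_p\phi^{(p)}_{\ell,0}\|\leq Cpe^{-\kappa p}$. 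This bypasses $I^{(p)}_{1,\ell}$ entirely. For the echelon form, your observation that one can choose $\sigma_\ell^{(p)}$ in $V_\ell^{(p)}\ominus V_{\ell+1}^{(p)}$ is essentially equivalent to what the Gram--Schmidt procedure of the paper produces (cf.\ \eqref{e:<sigma,phi>} and \eqref{eq:3.98a}), so this part is fine, but note that asserting the non-degeneracy of the successive quotients as a ``by-product of \eqref{e:rfnd_estmt}'' is circular if you intend it as input to the construction rather than output.
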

  As will be seen, 
  estimate \eqref{e:rfnd_estmt} is 
  directly related to the play on $\delta_p'$, 
  whereas the echelon property as such is not,
  and \eqref{e:echelon1}, \eqref{e:echelon2} are a direct consequence of 
\eqref{e:aj} and \eqref{eq:2.25a}.   
  Moreover, no estimate is given on the $\sigma_{\ell}^{(p)}$ 
  for $\ell\geq \delta'_p+1$ in the above statement; 
  as in the proof of Theorem \ref{thm_apdx}, 
  it turns out that we content ourselves with rather rough estimates 
  on these tail sections. 
 	 
 
\begin{proof}  [Proof of Lemma \ref{prop:rfnd_estmt}]
 Let $\overline\partial^{L^{p}*}$ be the formal adjoint of 
$\overline\partial^{L^{p}}$ on $(C^{\infty}_{0}(\Sigma, L^{p}), 
\|\quad\|_{\bL^2_p(\Sigma)})$. Then 
 $\Box_{p}=\overline\partial^{L^{p}*}\overline\partial^{L^{p}}
 : C^{\infty}_{0}(\Sigma, L^{p})\to C^{\infty}_{0}(\Sigma, L^{p})$
 is the Kodaira Laplacian  on $L^{p}$ and 
 \begin{align}\label{eq:3.68a}
	 \ker\square_p= H_{(2)}^{0}(\Sigma, L^{p}).
	 \end{align}

Observe that the construction of the $\phi_{\ell}^{(p)}$, 
  $\ell = 1,\ldots,\delta'_p$, 
  following Steps 1. to 4. of the proof of Theorem \ref{thm_apdx}  
  can be led alternatively by the following principle:
   \begin{itemize}
    \item[1'.]   with the cut-off function $\chi$ in \eqref{eq:2.13a},
	for $\ell = 1,\ldots,\delta'_p$,  set 
  \begin{align}\label{eq:3.70a}
	  \phi_{0,\ell}^{(p)}:=\phi_{\ell,0}^{(p)} 
	  = c^{(p)}_{\ell}\chi(|z|)z^{\ell}\mathfrak{e}_L^p .
\end{align}	  
    \item[2'.] give an explicit estimate of 
 $\big\|\square_p\phi_{0,\ell}^{(p)}\big\|_{\bL^2_p(\Sigma)}$.		
    \item[3'.]  we correct $\phi_{0,\ell}^{(p)}$ into holomorphic 
	$\bL^{2}$-section $\phi_{\ell}^{(p)}$ of $L^{p}$, by orthogonal 
	$\bL^2_p(\Sigma)$-projection.
we use the spectral gap property \cite[Cor.5.2]{bkp} 
(as a direct consequence of \cite[Theorem 6.1.1]{mm})
     together with the step 2'     to get \eqref{e:rfnd_estmt}.
   \end{itemize}

 \noindent
 \textit{Step 1'. } 
 We compute, by \eqref{eq:2.7a} and \eqref{eq:2.13a}, 
 as in \eqref{e:phi01}, for $\ell=1,\ldots,\delta'_p$, 
  \begin{equation} \label{eq:3.71a}
    \begin{aligned}
0\leq   1 - \big\|c_{\ell}^{(p)}z^{\ell} \chi(|z|)\mathfrak{e}_L^p
   \big\|_{\bL^2_p(\Sigma)}^2 
 &=\dfrac{1}{ (p-2)!} \int_0^{2\ell \beta |\log r|} 
u^{p-2} e^{-u} \Big(1-\chi^{2}(e^{-u/(2\ell)})\Big) du\\
&\leq \dfrac{1}{ (p-2)!} \int_0^{2\delta'_p \beta |\log r|} 
u^{p-2} e^{-u} du.
   \end{aligned}
   \end{equation}
As $ u^{p-2} e^{-u}$ is strictly increasing on $[0, p-2]$
and $\log \beta<0$, and from \eqref{eq:3.35a},
$2(\delta'_p+2)  |\log r| \leq (p-2)c(\kappa)$,
by \eqref{eq:2.34a}, 
\eqref{e:ckappa}, 
we get a refinement of \eqref{eq:3.6a}, 
    \begin{equation}   \label{eq:3.73a}
    \begin{split}
\dfrac{1}{ (p-2)!} \int_0^{2\delta'_p \beta |\log r|} 
 &    u^{p-2} e^{-u} du
\leq \dfrac{1}{ (p-2)!} \int_0^{(p-2) c(\kappa)\beta } 
u^{p-2} e^{-u} du\\
&\leq \frac{(p-2)^{p-2}}{(p-2)!}
  e^{(p-2)(\log (c(\kappa)\beta)-c(\kappa)\beta )} (p-2)c(\kappa) \beta\\
&=  \Big(\dfrac{p-2}{2\pi} \Big)^{1/2} c(\kappa)\beta
 \Big(1+ \mathcal{O}(p^{-1})\Big) 
 e^{(p-2)( \log (c(\kappa)\beta) -c(\kappa)\beta +1)}\\
& =  \mathcal{O}(e^{-2\kappa \, p}).		
 \end{split}
   \end{equation}
  From \eqref{eq:3.71a} and \eqref{eq:3.73a},  
 uniformly in $\ell\in \{1,\ldots,\delta'_p\}$,
  \begin{equation}   \label{e:|phi0|}
 \big\|  \phi_{0,\ell}^{(p)}
   \big\|_{\bL^2_p(\Sigma)}^{2} 
   = \big\|c_{\ell}^{(p)}z^{\ell} \chi(|z|)\mathfrak{e}_L^p
   \big\|_{\bL^2_p(\Sigma)}^{2} 
    = 1 + \mathcal{O}(	e^{-2\kappa p}).
  \end{equation}
 \textit{Step 2'. }
 Recall from \cite[(4.14), (4.15) or (4.30)]{bkp} 
 that on $\D_{2r}^*$ (seen in $\Sigma$), 
  \begin{equation}\label{eq:3.74a}
   \square_p(\LargerCdot\,\mathfrak{e}_L^p) 
= \Big(- |z|^2\log^2(|z|^2)\frac{\partial^2\LargerCdot}
{\partial z\partial\bar{z}} 
- p\bar{z}\log(|z|^2)\frac{\partial\LargerCdot}{\partial\bar{z}}\Big)
\mathfrak{e}_L^p.
  \end{equation}
Hence we obtain from \eqref{eq:3.70a} and \eqref{eq:3.74a}, 
for $\ell=1,\ldots,\delta'_p$, 
  \begin{equation}\label{eq:3.75a}
   \square_p\phi_{0,\ell}^{(p)} 
      = c^{(p)}_{\ell}
        \Big(- |z|^2\log^2(|z|^2)\frac{\partial^2}{\partial z\partial\bar{z}}
             \big(\chi(|z|)z^{\ell}\big)
         - p\, \bar{z}\log(|z|^2)\frac{\partial}{\partial\bar{z}}
             \big(\chi(|z|)z^{\ell}\big)
         \Big)\mathfrak{e}_L^p.
  \end{equation}
Since $\frac{\partial}{\partial\bar{z}}[\chi(|z|)z^{\ell}]
      = \big(\frac{\partial}{\partial\bar{z}}\chi(|z|)\big)z^{\ell}
      = \frac{|z|}{2\bar{z}}\chi'(|z|)z^{\ell}$,
we have
 \begin{equation}\label{eq:3.76a}
   \frac{\partial^2}{\partial 
   z\partial\bar{z}}\Big[\chi(|z|)z^{\ell}\Big]  
    = \frac{2\ell+1}{4|z|}z^{\ell}\chi'(|z|)
        + \frac14 z^{\ell}\chi''(|z|),
		 \end{equation}
which yields
  \begin{multline}\label{eq:3.77a}
   \square_p\phi_{0,\ell}^{(p)} = c^{(p)}_{\ell}
        \Big(- \frac{2\ell+1}{4}|z|z^{\ell}\log^2(|z|^2) \chi'(|z|)      \\
 - \frac14|z|^2z^{\ell}\log^2(|z|^2) \chi''(|z|)
-\frac{p}{2}|z|z^{\ell}\log(|z|^2) \chi'(|z|)\Big)\mathfrak{e}_L^p
  \end{multline}
 on $\D^*_{2r}$, and this readily extends to the whole $\Sigma$. 
 Therefore, 
  \begin{equation}   \label{e:Kodphi0}
  \begin{aligned}
   \big\|\square_p\phi_{0,\ell}^{(p)}&\big\|_{\bL^2_p(\Sigma)} 
 \leq  c^{(p)}_{\ell}
\Big(\frac{2\ell+1}{4}\big\||z|^{\ell+1}\log^2(|z|^2) \chi'(|z|)
\big\|_{\bL^2_p(\D^*)}     \\
&+ \frac14\big\||z|^{\ell+2}\log^2(|z|^2) \chi''(|z|)\big\|_{\bL^2_p(\D^*)}       
+ \frac{p}{2} \big\||z|^{\ell+1}\log(|z|^2) \chi'(|z|)\big\|_{\bL^2_p(\D^*)} 
           \Big).
  \end{aligned}
  \end{equation}
 Using nonetheless arguments similar to those of Step 1'. 
 above, 
 we claim that there exists  $C>0$ such that 
 for all $p\gg1$ and  $\ell=1,\ldots,\delta'_p$, 
  \begin{equation}   \label{e:|Kodphi0|}
  \begin{aligned}
   \big\||z|^{\ell+1}\log^2(|z|^2) \chi'(|z|)\big\|_{\bL^2_p(\D^*)} 
     &\leq C(c^{(p+4)}_{\ell+1})^{-1}  e^{-\kappa p},  \\
   \big\||z|^{\ell+2}\log^2(|z|^2) \chi''(|z|)\big\|_{\bL^2_p(\D^*)} 
     &\leq C(c^{(p+4)}_{\ell+2})^{-1}  e^{-\kappa p},  \\
  \big\||z|^{\ell+1}\log(|z|^2) \chi'(|z|)\big\|_{\bL^2_p(\D^*)} 
    &\leq C(c^{(p+2)}_{\ell+1})^{-1}  e^{-\kappa p}.                                     
  \end{aligned}
  \end{equation}
 Indeed, 
 from \eqref{eq:3.35a} and since
$2(\delta'_p+2)  |\log r| \leq (p-2)c(\kappa)$,
applying \eqref{eq:3.73a} for $p+4$
 as in \eqref{e:phi01}, we get with $C_{0}=\sup_{[0,1]}|\chi'|$, 
  \begin{align}\label{eq:3.80a}\begin{split}
   \big\| c^{(p+4)}_{\ell+1}|z|^{\ell+1}& \log^2(|z|^2) 
   \chi'(|z|)\big\|_{\bL^2_p(\D^*)}^2 \\
&= (c^{(p+4)}_{\ell+1})^{2}   
    \int_{r^\beta}^1 
\big|\!\log(t^2)\big|^{p+4} t^{2\ell+2} \chi'(t)^{2}
\frac{4\pi t dt}{t^2 \big|\!\log(t^2)\big|^{2}}\\
 &\overset{u=-2(\ell +1)\log t}{=\joinrel=} 
\dfrac{1}{ (p+2)!} \int_0^{2(\ell+1) \beta |\log r|} 
u^{p+2} e^{-u} \Big(\chi'\Big(e^{-\frac{u}{2(\ell+1)}}\Big)\Big)^{2}du\\
&\leq \dfrac{C_{0}^{2}}{ (p+2)!} \int_0^{2(\delta'_p+1) \beta |\log r|} 
u^{p+2} e^{-u} du\\
&\leq \dfrac{C_{0}^{2}}{ (p+2)!} \int_0^{(p+2) c(\kappa)\beta } 
u^{p+2} e^{-u} du=  \mathcal{O}(e^{-2\kappa \, p}).
\end{split}  \end{align}
Consequently, by 
 \eqref{e:Kodphi0} and \eqref{e:|Kodphi0|}, there exists $C>0$ such that
  for all $p\gg1$ and $\ell=1,\ldots,\delta'_p$, 
  \begin{equation}   \label{e:|Kophi0|2}
  \begin{split}
   &\big\|\square_{p}\phi_{0,\ell}^{(p)}\big\|_{\bL^{2}_{p}}
      \leq Cc^{(p)}_{\ell}\Big(\ell(c^{(p+4)}_{\ell+1})^{-1}
                               +(c^{(p+4)}_{\ell+2})^{-1}
                               +p(c^{(p+2)}_{\ell+1})^{-1}\Big)e^{-\kappa p}\\
&= C\left(\frac{\ell^{p-1}}{(p-2)!}\right)^{\!\frac{1}{2}}	
\left(\ell \left(\frac{(p+2)!}{(\ell+1)^{p+3}}\right)^{\!\frac{1}{2}}	
+ \left(\frac{(p+2)!}{(\ell+2)^{p+3}}\right)^{\!\frac{1}{2}}	
+ p \left(\frac{p!}{(\ell+1)^{p+1}}\right)^{\!\frac{1}{2}}\right) 
e^{-\kappa p}\\
&\leq C p^{2} e^{-\kappa p}.
\end{split}
  \end{equation}
  \noindent
 \textit{Step 3'. } 
 Recall that the spectral gap property \cite[Corollary 5.2]{bkp} 
tells us that there exists $C_{1}>0$ such that for all $p\gg 1$ we have
\begin{align}\label{eq:3.85a}
\text{ Spec}(\square_{p})\subset \{0\} \cup [C_{1} p, +\infty).
  \end{align}	
For $\ell\in\{1,\ldots,\delta'_p\}$ 
let $\psi_{0,\ell}^{(p)} \in \bL^{2}_p(\Sigma)$ such that 
$\psi_{0,\ell}^{(p)} \perp H^{0}_{(2)}(\Sigma, L^{p})$ and 
$\square_{p}\psi_{0,\ell}^{(p)} 
=\square_{p}\phi_{0,\ell}^{(p)}$. 
Then by \eqref{eq:3.68a},
\begin{align}\label{eq:3.84a}
\phi_{\ell}^{(p)}=\phi_{0,\ell}^{(p)}-\psi_{0,\ell}^{(p)}.
 \end{align}	  
By \eqref{e:|Kophi0|2}, \eqref{eq:3.85a} and \eqref{eq:3.84a} we get
\begin{equation}   \label{e:|psi|}
\big\|\phi_{\ell}^{(p)} - \phi_{0,\ell}^{(p)}\|_{\bL^{2}_{p}(\Sigma)} 
 =    \big\|\psi_{0,\ell}^{(p)} \|_{\bL^{2}_{p}(\Sigma)} 
\leq (C_{1} p)^{-1}  \|\square_{p}\phi_{0,\ell}^{(p)}
\|_{\bL^{2}_{p}(\Sigma)} 
\leq C C_{1}^{-1} p\, e^{-\kappa p},  
   \end{equation}
 uniformly in $\ell=1,\ldots,\delta'_p$.   
   Note that  \eqref{e:|phi0|} can be reformulated as 
  \begin{equation}\label{eq:3.87a}
   \big\langle \phi_{0,\ell}^{(p)}, \phi_{0,j}^{(p)}
   \big\rangle_{\bL^{2}_{p}(\Sigma)} 
      = \delta_{j\ell}\big(1+\mathcal{O}(e^{-2\kappa p})\big),
  \end{equation}
(the case $\ell\neq j$ provides 0 by circle symmetry).
Thus \eqref{eq:3.84a}, \eqref{e:|psi|} and \eqref{eq:3.87a} entail
  \begin{equation}\label{eq:3.88a}
   \big\langle \phi_{\ell}^{(p)}, \phi_{j}^{(p)}
   \big\rangle_{\bL^{2}_{p}(\Sigma)}
      = \big\langle \phi_{0,\ell}^{(p)}, \phi_{0,j}^{(p)}
	  \big\rangle_{\bL^{2}_{p}(\Sigma)}
- \big\langle \psi_{0,\ell}^{(p)}, \psi_{0,j}^{(p)}
\big\rangle_{\bL^{2}_{p}(\Sigma)}
      = \delta_{j\ell} + \mathcal{O}(p^{2} e^{-2\kappa p}),
  \end{equation}
 uniformly in $\ell, j=1,\ldots,\delta'_p$.
Because $(\sigma_{\ell}^{(p)})_{1\leq \ell\leq\delta'_p}$ 
is obtained by the Gram-Schmidt orthonormalisation of 
$(\phi_{\ell}^{(p)})_{1\leq \ell\leq \delta'_p}$ 
(which is a $\delta'_p=\mathcal{O}(p)$ process) 
we infer from \eqref{eq:3.32a} and \eqref{eq:3.88a} that
  \begin{equation}\label{eq:3.92a}
    \big\|\sigma_{\ell}'^{(p)} 
          - \phi_{\ell}^{(p)}\|_{\bL^{2}_p(\Sigma)} 
  =\mathcal{O}(p^{3} e^{-2\kappa p}),\qquad 
   \big\|\sigma_{\ell}'^{(p)} \|_{\bL^{2}_p(\Sigma)} 
	= 1+ \mathcal{O}(p^{3} e^{-2\kappa p}).
\end{equation}
Since $\sigma_{\ell}^{(p)}= \sigma_{\ell}'^{(p)} / 
\big\|\sigma_{\ell}'^{(p)} \|_{\bL^{2}_p(\Sigma)}$, 
we conclude from \eqref{eq:3.92a} that there exists $C>0$ such
that for $p\gg1$, 
  \begin{equation}\label{eq:3.89a}
    \big\|\sigma_{\ell}^{(p)} 
          - \phi_{\ell}^{(p)}\|_{\bL^{2}_p(\Sigma)} 
\leq \Big |  \big\|\sigma_{\ell}'^{(p)} \|_{\bL^{2}_p(\Sigma)} -1\Big|
+  \big\|\sigma_{\ell}'^{(p)} 
          - \phi_{\ell}^{(p)}\|_{\bL^{2}_p(\Sigma)} 
		  \leq Cp^{3}   e^{-2 \kappa p}, 
  \end{equation}
hence, by \eqref{e:|psi|} and \eqref{eq:3.89a}
that we have uniformly in $\ell=1,\ldots,\delta'_p$ for $p\gg1$, 
  \begin{equation}\label{eq:3.90a}
\big\|\sigma_{\ell}^{(p)} 
- c_{\ell}^{(p)}\chi(|z|)z^{\ell}\mathfrak{e}_L^p 
   \big\|_{\bL^{2}_p(\Sigma)} 
= \big\|\sigma_{\ell}^{(p)} - \phi_{0,\ell}^{(p)}\|_{\bL^{2}_p(\Sigma)} 
\leq Cp \, e^{-\kappa p}.
   \end{equation}

 \textit{Echelon property. --- } 
We use the expansion \eqref{eq:2.23a} of $\sigma^p_{\ell}$ 
on $\D^*_{4r}$. 
 By construction,
 $\phi_{j}^{(p)}\in {\rm Span}\{\sigma_1^{(p)},\ldots,\sigma_j^{(p)}\}$
 for $1\leq j \leq \delta'_{p}$, so if $j<\ell$, then 
 $\phi_{j}^{(p)}\perp_{\bL^2_p(\Sigma)} \sigma^{(p)}_{\ell}$, 
  as $\phi_j^{(p)}$ is the $\bL^2_p(\Sigma)$-projection
  of $\phi_{0,j}^{(p)}$ on holomorphic sections.
  Hence we have as in \eqref{eq:2.25a} that
   \begin{equation}   \label{e:<sigma,phi>}
    \big\langle \sigma_{\ell}^{(p)}, \phi_{0,j}^{(p)} 
	\big\rangle_{\bL^2_p(\Sigma)} 
      = \big\langle \sigma_{\ell}^{(p)}, \phi_{j}^{(p)}
	  \big\rangle_{\bL^2_p(\Sigma)} =0\,, \quad \text{  if } j<\ell .
   \end{equation}
Now \eqref{e:aj} and \eqref{e:<sigma,phi>} entail
  \begin{equation}   \label{eq:3.98a}
  a_{j\ell}^{(p)}=0\qquad \text{  if } j<\ell, \,  j\in \{1,\ldots,\delta_p'\}, 
\,   \ell\in \{1,\ldots,d_p\}.
  \end{equation}
From \eqref{eq:2.23a} and \eqref{eq:3.98a} we get 
\eqref{e:echelon1} and \eqref{e:echelon2}.
The proof of Lemma \ref{prop:rfnd_estmt} is completed.
 \end{proof}

The following consequence of Lemma  \ref{prop:rfnd_estmt}
that refines \eqref{eq:2.36a} is very useful in our computations.
 \begin{lem}\label{eq:t3.4}
We have uniformly  for $j, \ell\in \{1,\ldots,\delta'_p\}$, 
  \begin{equation}   \label{e:a_alphaell} \begin{split}
  a_{j\ell}^{(p)} =
   \left \{ \begin{array}{l}
   0 \hspace{3cm} \qquad  \text{ for }   j<\ell; \\
  c_{j}^{(p)} \,\big(\delta_{j\ell}+\mathcal{O}(pe^{-\kappa p})\big) 
   \quad \text{ for } j\geq \ell .
 \end{array} \right.   \end{split}  
   \end{equation}
\end{lem}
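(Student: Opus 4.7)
My plan is to split according to whether $j < \ell$ or $j \geq \ell$. The case $j < \ell$ would follow immediately from the echelon property \eqref{e:echelon1} just established in Lemma \ref{prop:rfnd_estmt}: the expansion of $\sigma_\ell^{(p)}$ on $\D^*_{4r}$ starts at $z^\ell$ for $\ell \in \{1,\ldots,\delta'_p\}$, so $a_{j\ell}^{(p)} = 0$ for all $j < \ell$.

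For $j \geq \ell$, I would evaluate $\langle \sigma_\ell^{(p)}, \phi_{0,j}^{(p)}\rangle_{\bL^2_p(\Sigma)}$ in two ways and compare. First, expanding $\sigma_\ell^{(p)} = \sum_q a_{q\ell}^{(p)} z^q \mathfrak{e}_L^p$ on $\D^*_{2r}$ and using circle symmetry to annihilate all $q \neq j$ contributions in the integral against $\chi(|z|)\overline{z^j}$ yields
\[
\big\langle \sigma_\ell^{(p)}, \phi_{0,j}^{(p)}\big\rangle_{\bL^2_p(\Sigma)} = \frac{a_{j\ell}^{(p)}}{c_j^{(p)}}\, I_j^{(p)},
\qquad I_j^{(p)} := (c_j^{(p)})^2 \int_{\D^*_{2r}} \chi(|z|)|z|^{2j}\big|\!\log(|z|^2)\big|^p\,\omega_{\D^*}.
\]
Repeating the argument of \eqref{eq:3.71a}--\eqref{eq:3.73a} with $\chi$ in place of $\chi^2$ (which does not change the estimate since both $1-\chi$ and $1-\chi^2$ are supported in $\{|z| \geq r^\beta\}$) gives $I_j^{(p)} = 1 + \mathcal{O}(e^{-2\kappa p})$, uniformly in $j \in \{1,\ldots,\delta'_p\}$.

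Second, I would decompose
\[
\big\langle \sigma_\ell^{(p)}, \phi_{0,j}^{(p)}\big\rangle_{\bL^2_p(\Sigma)}
= \big\langle \phi_{0,\ell}^{(p)}, \phi_{0,j}^{(p)}\big\rangle_{\bL^2_p(\Sigma)}
+ \big\langle \sigma_\ell^{(p)} - \phi_{0,\ell}^{(p)}, \phi_{0,j}^{(p)}\big\rangle_{\bL^2_p(\Sigma)}.
\]
Circle symmetry together with \eqref{e:|phi0|} identifies the first summand with $\delta_{j\ell}(1 + \mathcal{O}(e^{-2\kappa p}))$, while Cauchy--Schwarz combined with the key approximation \eqref{e:rfnd_estmt} and \eqref{e:|phi0|} bounds the second by $Cp\,e^{-\kappa p}$. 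Hence $\langle \sigma_\ell^{(p)}, \phi_{0,j}^{(p)}\rangle_{\bL^2_p(\Sigma)} = \delta_{j\ell} + \mathcal{O}(p\,e^{-\kappa p})$.

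Equating the two expressions and dividing by $I_j^{(p)} = 1 + \mathcal{O}(e^{-2\kappa p})$ then yields $a_{j\ell}^{(p)} = c_j^{(p)}\big(\delta_{j\ell} + \mathcal{O}(p\,e^{-\kappa p})\big)$, uniformly in $j, \ell \in \{1,\ldots,\delta'_p\}$. I do not foresee any substantial obstacle: the argument is essentially bookkeeping around Lemma \ref{prop:rfnd_estmt}, with the dominant error $\mathcal{O}(p\,e^{-\kappa p})$ dictated by the polynomial prefactor $p$ already present in \eqref{e:rfnd_estmt}, which dwarfs the sharper $\mathcal{O}(e^{-2\kappa p})$ coming from the normalization factor $I_j^{(p)}$.
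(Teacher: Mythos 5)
Your proof is correct and follows essentially the same route as the paper: the $j<\ell$ case comes from the echelon property \eqref{eq:3.98a}, the Fourier-coefficient identity $\langle \sigma_\ell^{(p)},\phi_{0,j}^{(p)}\rangle = (a_{j\ell}^{(p)}/c_j^{(p)})I_j^{(p)}$ is the first equality of \eqref{eq:2.36a}, and the decomposition through $\phi_{0,\ell}^{(p)}$ together with Cauchy--Schwarz, \eqref{e:rfnd_estmt} and \eqref{eq:3.87a} is exactly \eqref{e:<sigma,phi0>2}--\eqref{eq:3.55a}. The only cosmetic difference is that the paper obtains $I_j^{(p)}=1+\mathcal{O}(e^{-2\kappa p})$ by squeezing between \eqref{eq:3.1a} and \eqref{e:|phi0|} rather than redoing the substitution computation; both are valid.
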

\begin{proof}   
First note that by \eqref{eq:3.70a} we have 
 \begin{equation}   \label{e:<sigma,phi0>2}
  \begin{aligned}
 \big\langle \sigma_\ell^{(p)} , \phi_{j}^{(p)}
 \big\rangle_{\bL^2_p(\Sigma)}
= \big\langle \sigma_\ell^{(p)}, \phi_{0,j }^{(p)}
    \big\rangle_{\bL^2_p(\Sigma)}     
= &\big\langle \sigma_\ell^{(p)} -   \phi_{0,\ell}^{(p)} ,
 \phi_{0,j }^{(p)} \big\rangle_{\bL^2_p(\Sigma)} 
 +  \big\langle \phi_{0,\ell}^{(p)},
 \phi_{0,j}^{(p)}\big\rangle_{\bL^2_p(\Sigma)} .
   \end{aligned}
  \end{equation}
Further, \eqref{e:rfnd_estmt}, \eqref{eq:3.87a} and
\eqref{e:<sigma,phi0>2} imply
  \begin{equation}\label{eq:3.55a}
   \big\langle \sigma_\ell^{(p)} , \phi_{j}^{(p)}
\big\rangle_{\bL^2_p(\Sigma)} 
= \mathcal{O}(pe^{-\kappa p}) 
+ \delta_{j \ell}\, \big(1+\mathcal{O}(e^{-2\kappa p})\big).
  \end{equation}
By \eqref{eq:3.1a} and \eqref{e:|phi0|} we have uniformly on 
$j\in \{1,\ldots,\delta'_p\}$,
   \begin{equation}\label{eq:3.53a}
   (c_{j }^{(p)})^2
   \int_{\D^*_{2r}}\big|\!\log(|z|^2)\big|^p |z|^{2j } \chi(|z|)\,
   \omega_{\D^*} 
      = 1+ \mathcal{O}(e^{-2\kappa p}).
  \end{equation}
The first equality of \eqref{eq:2.36a},
\eqref{eq:3.98a}, \eqref{eq:3.55a}
and \eqref{eq:3.53a} entail \eqref{e:a_alphaell}. 
 \end{proof} 
 
 \subsection{Proof of Theorem \ref{thm:diffquot} }
 \label{eq:s3.2}

  We show now how to establish Theorem \ref{thm:diffquot} 
 by using Lemma \ref{prop:rfnd_estmt}. 
  It can be noticed here that while estimate \eqref{e:rfnd_estmt} 
  is essential in establishing Theorem \ref{thm:diffquot}, 
  the echelon property is not, 
  but helps nonetheless clarify some of the upcoming computations. 
  
 The proof goes as follows: 
 we start by explicit computations,
then  use Lemma  \ref{prop:rfnd_estmt} to lead a precise analysis 
of head terms, i.e., all its indices $\leq \delta'_{p}$;
 and recall some rough estimates of tail terms,
 i.e., some of its indices $\geq \delta'_{p}+1$.  
 On some shrinking disc family $\{|z|\leq c'p^{-A'}\}$, 
 we will conclude from \eqref{eq:2.27a} that the tails terms
 can be controlled by $2^{-\alpha' p} B^{\D^*}_{p}$,
 hence on some fixed trivialization disc, as for Theorem \ref{thm_apdx}.

 From  \eqref{eq:3.10a}, for $z\in \D^{*}_{4r}$,  set 
   \begin{equation}\label{eq:4.1a}
   \beta^\Sigma_{p}(z)= \beta^\Sigma_{p}(z,z), \qquad 
    \beta^{\D^*}_{p}(z)= \beta^{\D^*}_{p}(z,z).
   \end{equation}
    By \eqref{eq:3.10a} and \eqref{eq:4.1a}, we have
 \begin{align}\label{eq:4.3a}
   \frac{B_p(z)}{B_p^{\D^*}(z)} 
   =  \frac{\beta_p^{\Sigma}(z)}{\beta_p^{\D^*}(z)} 
= 1 + \big(\beta_p^{\Sigma} - \beta_p^{\D^*}\big)(z)
( \beta_p^{\D^*}(z))^{-1}. 
 \end{align}
 With the notations of Lemma 
 \ref{prop:rfnd_estmt} we compute explicitly on $\D_{4r}^*$.
 By \eqref{e:BFS1}, \eqref{eq:2.8a}, \eqref{eq:2.23a},
\eqref{eq:3.10a} and \eqref{eq:4.1a}, we have     
 \begin{align}\label{eq:4.2a}\begin{split}
 \beta_p^{\Sigma}(z)  
=\sum_{q,s=1}^{\infty}  \bigg(\sum_{\ell=1}^{d_p} 
a_{q\ell}^{(p)}\overline{a_{s\ell}^{(p)}}\bigg) z^q \bar{z}^s ,
\qquad \beta_p^{\D^*}(z) = 
 \sum_{q=1}^{\infty} (c_q^{(p)})^2 |z|^{2q}.   
 \end{split} \end{align}
For $q,s\in \N^{*}$, set
\begin{align}\label{eq:4.7a} 
\epsilon_{qs}=  \sum_{\ell=1}^{d_p} \tfrac{a_{q\ell}^{(p)}}{c_q^{(p)}}
                   \tfrac{\overline{a_{s\ell}^{(p)}}}{c_s^{(p)}} 
                  - \delta_{qs}. 
  \end{align}
From \eqref{eq:4.2a} and \eqref{eq:4.7a},  we get
  \begin{align}\label{eq:4.5a}\begin{split}
   \frac{d}{dz}& \big(
  \beta_p^{\Sigma} - \beta_p^{\D^*} \big)(z) 
      \cdot  \beta_p^{\D^*}(z)  \\
  &=\sum_{q,s=1}^{\infty}
q \bigg(\sum_{\ell=1}^{d_p} a_{q\ell}^{(p)}
	\overline{a_{s\ell}^{(p)}} z^{q-1} \bar{z}^s
 -  \delta_{qs}(c_q^{(p)})^2 z^{q-1}
			  \bar{z}^s\bigg)
         \cdot\sum_{m=1}^{\infty} (c_m^{(p)})^2 |z|^{2m}                                         \\
      &= \sum_{q,s,m=1}^{\infty}  
	  q(c_m^{(p)})^2 c_q^{(p)}c_s^{(p)}  \epsilon_{qs} 
   z^{q+m-1} \bar{z}^{s+m}, 
\end{split}  \end{align}
 and similarly, 
\begin{align}\label{eq:4.6a}\begin{split}
\big(
  \beta_p^{\Sigma} - \beta_p^{\D^*} \big)(z)
  \cdot \frac{d}{dz}\beta_p^{\D^*}(z)          
       = \sum_{q,s,m=1}^{\infty}  
	   m(c_m^{(p)})^2 c_q^{(p)}c_s^{(p)}\epsilon_{qs} 
             z^{q+m-1} \bar{z}^{s+m}.
\end{split}  \end{align}
 From  \eqref{eq:4.3a}, \eqref{eq:4.5a} and
 \eqref{eq:4.6a}, we get
  \begin{equation}\label{eq:4.8a}
 \frac{d}{dz}\frac{B_p}{B_p^{\D^*}} (z)
 =  (\beta_p^{\D^*}(z) )^{-2}
     \sum_{q,s,m=1}^{\infty} 
             \bigg[
             (q-m)(c_m^{(p)})^2c_q^{(p)}c_s^{(p)}
       \epsilon_{qs}    
             \bigg]
             z^{q+m-1} \bar{z}^{s+m}.
  \end{equation}
 Observe that the coefficient inside $[\ldots]$ in the above sum
 vanishes if $q=m$. 
This allows to separate the above sum into 
$$\sum_{q=1, s\geq1, m\geq2}\;\;
\text{and}\;\; \sum_{q\geq 2, s\geq1, m\geq1}.$$
We first tackle the sum over 
 $q=1$, $s\geq1$ and $m\geq2$, 
 focusing on the cases $s, m\leq \delta'_p$; 
 then we deal with the sum over $q\geq2$, $s\geq1$ and $m\geq1$, 
 focusing on $q, s,m\leq \delta'_p$, 
 before we also address the cases of ``large indices''
 ($\max\{q, s, m\}\geq\delta'_p+1$). 
 
  ~
 
 \noindent
 \textit{Head terms. --- }
 We look at first 
  \begin{equation}   \label{e:hdsum}
  I_{p,\delta'_p}(z)  = \sum_{s=1}^{\delta'_p}
  \sum_{m=2}^{\delta'_p} 
      \bigg[
 (1-m)(c_m^{(p)})^2c_1^{(p)}c_s^{(p)}    \epsilon_{1s}    
      \bigg]
         z^{m} \bar{z}^{s+m}.
   \end{equation}
 By  \eqref{e:echelon1}, \eqref{e:echelon2},
 \eqref{e:a_alphaell} and \eqref{eq:4.7a},  
uniformly for $q,s\in  \{1,\ldots,\delta'_p\}$,
  \begin{equation}\label{eq:4.17a}
   \epsilon_{qs} 
= \sum_{\ell=1}^{{\rm min}\{q, s\}} \frac{a_{q\ell}^{(p)}}{c_q^{(p)}}
        \frac{\overline{a_{s\ell}^{(p)}}}{c_s^{(p)}} 
                   - \delta_{qs}
=  \mathcal{O}( \delta'_p p\,  e^{-\kappa p})       .           
  \end{equation}
 For all $t\in\{2,\ldots,\delta'_p\}$, 
 $j\in \{1,\ldots,\delta'_p\}$,  by \eqref{eq:2.7a}, 	
  \begin{equation}\label{eq:4.18a}
 \big| (t-j)c_t^{(p)}\big| 
      \leq \delta'_p \Big(\frac{t}{t-1}\Big)^{(p-1)/2}c_{t-1}^{(p)} 
	  \leq \delta'_p 2^{(p-1)/2}c_{t-1}^{(p)}.
  \end{equation} 
From  \eqref{e:hdsum}, \eqref{eq:4.17a} and \eqref{eq:4.18a}, we get
\begin{align}\label{eq:4.13a}\begin{split}
  \Big|I_{p,\delta'_p}(z)  \Big|   &\leq 
        \sum_{s=1}^{\delta'_p}
        \sum_{m=2}^{\delta'_p}      
 (m-1)(c_m^{(p)})^2c_1^{(p)}c_s^{(p)} |\epsilon_{1s}| |z|^{s+2m}  \\
 &\leq \mathcal{O}\big((\delta'_p)^{2} 
 2^{p/2} p\, e^{-\kappa p}\big)\cdot
\sum_{s=1}^{\delta'_p}
\sum_{m=2}^{\delta'_p} c_m^{(p)}c_{m-1}^{(p)}
c_1^{(p)}c_s^{(p)}  |z|^{s+2m} .
\end{split}  \end{align}
 But 
  \begin{multline}\label{eq:4.14a}
   \sum_{s=1}^{\delta'_p}
\sum_{m=2}^{\delta'_p} c_m^{(p)}c_{m-1}^{(p)}
c_1^{(p)}c_s^{(p)}  |z|^{s+2m}
   = 
 \bigg(\sum_{s=1}^{\delta'_p} c_1^{(p)}c_s^{(p)} |z|^{1+s}\bigg)
\bigg(\sum_{m=2}^{\delta'_p} c_m^{(p)}c_{m-1}^{(p)}
 |z|^{2m-1}\bigg)        \\
    \leq  \frac{1}{2}\bigg(\delta'_p (c_1^{(p)})^2|z|^2 + 
   \sum_{s=1}^{\delta'_p} (c_s^{(p)})^2 |z|^{2s}\bigg)
\bigg(\sum_{m=2}^{\delta'_p} (c_m^{(p)})^2|z|^{2m}\bigg)^{\tfrac12}
 \bigg(\sum_{m=2}^{\delta'_p} (c_{m-1}^{(p)})^2|z|^{2(m-1)}
 \bigg)^{\tfrac12}          \\
 \leq (\delta'_p+1)
\bigg(\sum_{j=1}^{\infty} (c_{j}^{(p)})^2|z|^{2j}\bigg)^2   
=     (\delta'_p+1)( \beta^{\D^*}_p (z))^2 .
      \end{multline}
We proceed similarly with the sum
  \begin{align}\label{eq:4.16a}
  II_{p,\delta'_p}(z)  &= \sum_{q=2}^{\delta'_p}  \sum_{s=1}^{\delta'_p} 
\sum_{m=1}^{\delta'_p}
\Big[(q-m)(c_m^{(p)})^2c_q^{(p)}c_s^{(p)}    \epsilon_{qs}                                       
         \Big]z^{q+m-1}\bar{z}^{s+m}  . 
   \end{align}
We have analogously to \eqref{eq:4.14a},
    \begin{equation}\label{eq:4.19a}
    \begin{split}
    \sum_{q=2}^{\delta'_p} 
            \sum_{s=1}^{\delta'_p} 
            \sum_{m=1}^{\delta'_p}&
               (c_m^{(p)})^2 c_{q-1}^{(p)} c_s^{(p)}|z|^{q+s+2m-1} \\
        &= 
            \bigg(\sum_{q=2}^{\delta'_p} c_{q-1}^{(p)}|z|^{q-1}\bigg)
            \bigg(\sum_{s=1}^{\delta'_p}  c_s^{(p)}|z|^{s} \bigg)
 \bigg(\sum_{m=1}^{\delta'_p} (c_m^{(p)})^2 |z|^{2m}\bigg)   \\
 &\leq \delta'_p
\bigg(\sum_{q=2}^{\delta'_p} (c_{q-1}^{(p)})^2|z|^{2q-2}
\bigg)^{\tfrac12}
\bigg(\sum_{s=1}^{\delta'_p}  (c_s^{(p)})^2|z|^{2s} \bigg)^{\tfrac12}
\bigg(\sum_{m=1}^{\delta'_p} (c_m^{(p)})^2 |z|^{2m}\bigg)    \\
       &\leq \delta'_p ( \beta^{\D^*}_p (z))^2    .
       \end{split}
  \end{equation}
 From \eqref{eq:4.17a}, \eqref{eq:4.18a}, 
 \eqref{eq:4.16a} and \eqref{eq:4.19a}, we get
  \begin{align}\label{eq:4.20a}\begin{split}
   \bigg|  II_{p,\delta'_p}(z)  \bigg|       
       &\leq \sum_{q=2}^{\delta'_p} 
         \sum_{s=1}^{\delta'_p} 
         \sum_{m=1}^{\delta'_p}
           (c_m^{(p)})^2\big|(q-m) c_q^{(p)}\big| c_s^{(p)}|\epsilon_{qs}|
           |z|^{q+s+2m-1}             \\
&\leq\mathcal{O}\big( (\delta'_p)^2 2^{p/2} p\, 
e^{- \kappa p}\big)\cdot
            \sum_{q=2}^{\delta'_p} 
            \sum_{s=1}^{\delta'_p} 
            \sum_{m=1}^{\delta'_p}
               (c_m^{(p)})^2 c_{q-1}^{(p)} c_s^{(p)}|z|^{q+s+2m-1}   \\
&\leq   \mathcal{O}\big((\delta'_p)^{3} 2^{p/2} p\, e^{-\kappa p}\big)
\cdot    ( \beta^{\D^*}_p (z))^2 .         
 \end{split}   \end{align}

 \noindent
 \textit{Tail terms. --- } Set
  \begin{align}\label{eq:4.22a}\begin{split}
   \mathcal{A}_p^1 =& \{(q,s,m)\in (\N^{*})^{3}\,: \, 
                         q\geq \delta'_p+1;\,s,m\leq \delta'_p\},        \\ 
   \mathcal{A}_p^2 =& \{(q,s,m)\in (\N^{*})^{3}\,: \,
                         s\geq \delta'_p+1;\,m\leq \delta'_p\} ,        \\
   \mathcal{A}_p^3 =& \{(q,s,m)\in (\N^{*})^{3}\,: \, 
                         m\geq \delta'_p+1\} \}.    
 \end{split} \end{align}
 For $j=1,2,3$, set 
 \begin{equation}   \label{e:tailsumA1}
  I(\mathcal{A}_p^j)(z) =  \sum_{(q,s,m)\in \mathcal{A}_p^j} 
       (q-m)(c_m^{(p)})^2c_q^{(p)}c_s^{(p)}\epsilon_{qs}
	   z^{q+m-1}\bar{z}^{s+m}.
  \end{equation}
By   \eqref{eq:4.8a}, \eqref{e:hdsum}, \eqref{eq:4.16a}
and \eqref{e:tailsumA1}, we have
 \begin{align}\label{eq:4.25a}
   \frac{d}{dz}\frac{B_p}{B_p^{\D^*}} (z)
 =  (\beta_p^{\D^*}(z) )^{-2}
 \Big( I_{p,\delta'_{p}}(z) +II_{p,\delta'_{p}}(z) + I(\mathcal{A}_p^1)(z) 
 + I(\mathcal{A}_p^2)(z)+  I(\mathcal{A}_p^3)(z)  \Big).
  \end{align}
  We now look at the remaining terms of the sum in 
  \eqref{eq:4.25a}, i.e.,  $I(\mathcal{A}_p^j) (j=1,2,3)$.
  
 First, for all triple $(q,s,m)$ of $\mathcal{A}^1_p$, 
 as $q\geq \delta'_p+1 > \delta'_p \geq s$, 
 by \eqref{eq:3.98a}, \eqref{eq:4.7a}, one has: 
  \begin{align}\label{eq:4.26a}
	  c_q^{(p)}c_s^{(p)}\epsilon_{qs}
     = \sum_{\ell=1}^{d_p} a_{q\ell}^{(p)}
                           \overline{a_{s\ell}^{(p)}}
     = \sum_{\ell=1}^{\delta'_p} a_{q\ell}^{(p)}
                           \overline{a_{s\ell}^{(p)}}.
  \end{align}						   
 %
From \eqref{e:tailsumA1} and \eqref{eq:4.26a},
  we have
  \begin{multline}\label{eq:4.28a}
 | I(\mathcal{A}_p^1)(z)|
  \leq C   d_p 
     \sum_{(q,s,m)\in \mathcal{A}_p^1} 
              q \Big(\sup_{1\leq\ell\leq d_p}|a_{q\ell}^{(p)}|\Big)
              \Big(\sup_{1\leq\ell\leq d_p}|a_{s\ell}^{(p)}|\Big)
              (c_m^{(p)})^2 |z|^{q+s+2m-1}                     \\
= 	C   d_p 	\bigg(\sum_{q=\delta'_p+1}^{\infty}
q   \Big(\sup_{1\leq\ell\leq d_p}|a_{q\ell}^{(p)}|\Big)   |z|^{q-1}
  \bigg) 	\\  			  
       \times  \bigg(\sum_{s=1}^{\delta'_p} 
\Big(\sup_{1\leq\ell\leq d_p}|a_{s\ell}^{(p)}|\Big)|z|^{s}\bigg)
         \bigg(\sum_{m=1}^{\delta'_p}(c_m^{(p)})^2|z|^{2m}\bigg).
  \end{multline}
By  \eqref{eq:3.98a} and \eqref{e:a_alphaell}
we get uniformly in $j\in \{1,\cdots,\delta'_p\}$, 
\begin{align}\label{eq:4.27a}
\sup_{1\leq\ell\leq d_p}|a_{j\ell}^{(p)}|
=\sup_{1\leq\ell\leq \delta'_p}|a_{j\ell}^{(p)}| \leq Cc_j^{(p)}.
 \end{align}	
 By 
 \eqref{eq:4.2a}  
 and \eqref{eq:4.27a}, observe that  for $z\in \D^{*}_{r}$,
  \begin{multline}\label{eq:4.29a}
   \sum_{s=1}^{\delta'_p} 
\Big(\sup_{1\leq\ell\leq d_p}\big|a_{s\ell}^{(p)}\big|\Big)|z|^{s}
    \leq  C \sum_{s=1}^{\delta'_p} c_s^{(p)}|z|^{s}\\
    \leq C(\delta'_p)^{1/2}
          \bigg(\sum_{s=1}^{\delta'_p} 
                  (c_s^{(p)})^2|z|^{2s}\bigg)^{1/2}                 
    \leq C(\delta'_p)^{1/2}  (\beta_p^{\D^*}(z) )^{1/2}.
\end{multline}
Now we give an estimate via $\beta_p^{\D^*}(z)$ for the sum  
$\sum_{q\geq \delta'_p+1}$ in \eqref{eq:4.28a}.
 Recall that for $\xi\in [0,1)$ and $N\geq 0$ we have that 
  \begin{equation}\label{eq:4.35a}
   \sum_{q=N+1}^{\infty}q\xi^{q-1} 
 =\Big(   \sum_{q=N+1}^{\infty}\xi^{q} \Big)'  
 =   \frac{(N+1)\xi^{N}-N\xi^{N+1}}{(1-\xi)^2} 
     \leq \frac{(N+1)\xi^{N}}{(1-\xi)^2}, 
  \end{equation}
 thus, if $|z|\leq  r$,   
   \begin{equation}\label{eq:4.36a}
   \sum_{q=\delta'_p+1}^{\infty}q\Big(\frac{|z|}{2r}\Big)^{q-1}
\leq (\delta'_p+1)\Big(\frac{|z|}{2r}\Big)^{\delta'_p}
\Big(1-\frac{|z|}{2r}\Big)^{- 2}
     \leq 4(\delta'_p+1)  \Big(\frac{|z|}{2r}\Big)^{\delta'_p}.
  \end{equation}

  Taking now 
    \begin{equation}\label{eq:4.38a}
   A' = \frac{1}{2\alpha'}, \quad \alpha'= \frac{c(\kappa)}{4 |\log r|} 
    \quad \text{ and } \quad
   c' = r  e^{1/2\alpha'}\big|\!
   \log\big(|2r|^{2}\big)\big|^{1/2\alpha'},
  \end{equation}
we obtain from \eqref{eq:3.35a}
that  for any $\tau \in \N$ fixed, 
  \begin{equation}\label{eq:4.39a}
\alpha' p\leq \delta'_{p} -\tau \quad \text{ for } \qquad p\gg1.
  \end{equation}  
 Thus, as in \eqref{eq:2.31a},  we have by \eqref{eq:4.38a} 
 and \eqref{eq:4.39a} for $\tau\in \N$ fixed,  
  \begin{equation}\label{eq:4.41a}
\Big(\frac{|z|}{2r}\Big)^{2(\delta'_p-\tau)/p}
\frac{1}{|\log(|2r|^{2})|}
\leq \Big(\frac{|z|}{2r}\Big)^{2\alpha'}
\frac{1}{|\log(|2r|^{2})|}
\leq 2^{- 2\alpha'}\frac{e}{p},
  \end{equation}
for $p\gg1$,  $|z|\leq c'p^{-A'}$.  
To conclude, we estimate by 
\eqref{eq:2.34a}, \eqref{eq:4.2a} and \eqref{eq:4.41a} 
for any $\tau\in \N$ fixed, 
\begin{multline}\label{eq:4.43a}
   \Big|\!\log\Big(|2r|^{2}\Big)\Big|^{-p/2}
   \left(\frac{|z|}{2r}\right)^{\delta'_p-\tau +1}\\[2pt]
       =  \frac{1}{2 r} 
 \bigg( \left(\frac{|z|}{2r}\right)^{2(\delta'_p-\tau)/p}
\frac{1}{ |\log(|2r|^{2})|}\bigg)^{p/2}   
 \Big(2\pi (p-2)!\Big)^{1/2} c_1^{(p)} |z|\\
\leq C p^{-1/2}	2^{- \alpha' p} \beta^{\D^{*}}_{p}(z)^{1/2}	   .    
\end{multline}
for all $p\gg 1$ and $|z|\leq c'p^{-A'}$, 
Thus by  \eqref{eq:2.27a}, 
\eqref{eq:4.36a} and \eqref{eq:4.43a} for $\tau=1$, we have 
 for all $p\gg 1$ and $|z|\leq c'p^{-A'}$, 
\begin{multline}\label{eq:4.42a}
\sum_{q=\delta'_p+1}^{\infty}
q  \Big(\sup_{1\leq\ell\leq d_p}|a_{q\ell}^{(p)}|\Big)   |z|^{q-1}
\leq \frac{Cp^{1/2}}{2r}  \Big|\!\log(|2r|^{2})\Big|^{-p/2}
\sum_{q=\delta'_p+1}^{\infty}q\Big(\frac{|z|}{2r}\Big)^{q-1}\\
\leq C \delta'_p 2^{- \alpha' p}\beta^{\D^{*}}_{p}(z)^{1/2}.
\end{multline} 
By \eqref{eq:2.4a},  \eqref{eq:4.28a}, \eqref{eq:4.29a}
and \eqref{eq:4.42a}
we have for all $p\gg 1$ and $|z|\leq c'p^{-A'}$,  
\begin{align}\label{eq:4.44a}
    | I(\mathcal{A}_p^1)(z)| \leq C  (\delta'_{p})^{3/2}
d_{p}  2^{-\alpha' p}  (\beta_p^{\D^*}(z) )^{2}
\leq C p^{5/2} 2^{-\alpha' p}  (\beta_p^{\D^*}(z) )^{2}.
\end{align} 
 
\noindent
\textit{Sums over $\mathcal{A}^2_p$ and $\mathcal{A}^3_p$. --- }
We continue to work on the estimates of the tail terms.
We first deal with the sum over $\mathcal{A}^2_p$. 
By \eqref{eq:4.7a} and \eqref{e:tailsumA1}, one has: 
\begin{align}\label{eq:4.46a}\begin{split}
I(\mathcal{A}_p^2)(z)        =& \sum_{(q,s,m)\in \mathcal{A}_p^2} 
(q-m)(c_m^{(p)})^2 \bigg(\sum_{\ell=1}^{d_p}a_{q\ell}^{(p)}
\overline{a_{s\ell}^{(p)}}\bigg)  z^{q+m-1}\bar{z}^{s+m}        \\
         &- \sum_{s=\delta'_p+1}^{\infty}\sum_{m=1}^{\delta'_p} 
               (s-m)(c_m^{(p)})^2(c_s^{(p)})^2 z^{s+m-1}\bar{z}^{s+m}.              \\
        =&\!: S_1 - S_2. 
 \end{split}  \end{align}
 Now, since $|q-m|\leq qm$ for all $(q,s,m)\in \mathcal{A}_p^2$, 
we obtain, 
  \begin{multline}   \label{e:S1}
   |S_1| \leq d_p 
               \sum_{q=1}^{\infty} q\Big(\sup_{1\leq\ell\leq d_p}
			   |a_{q\ell}^{(p)}|\Big)|z|^q  \cdot 
               \sum_{s=\delta'_p+1}^{\infty} 
\Big(\sup_{1\leq\ell\leq d_p}|a_{s\ell}^{(p)}|\Big)|z|^{s-1} \cdot
              \sum_{m=1}^{\delta'_p} 
			  m(c_m^{(p)})^2|z|^{2m}.
  \end{multline}
By  \eqref{eq:4.27a}, \eqref{eq:4.29a} 
and \eqref{eq:4.42a}  we get on  $|z|\leq c'p^{-A'}$, 
   \begin{multline}\label{eq:4.48a}
           \sum_{q=1}^{\infty} q\Big(\sup_{1\leq\ell\leq d_p}
		   |a_{q\ell}^{(p)}|\Big)|z|^q
             \leq C\delta'_p\sum_{q=1}^{\delta'_p} c_q^{(p)}|z|^q  
+ C |z|\delta'_p 2^{- \alpha' p}\beta^{\D^{*}}_{p}(z)^{1/2}	\\	 
             = \mathcal{O}\big( (\delta'_{p})^{3/2} + 
\delta'_{p}	2^{- \alpha' p}\Big)
	(\beta_p^{\D^*}(z) )^{1/2}.
\end{multline}
From \eqref{eq:2.27a} and \eqref{eq:4.43a} for $\tau=1$ 
we infer that we have for $|z|\leq c'p^{-A'}$, 
\begin{equation}\label{eq:4.49a}
\begin{split}
          \sum_{s=\delta'_p+1}^{\infty} \Big(\sup_{1\leq\ell\leq d_p}
		  |a_{s\ell}^{(p)}&|\Big)|z|^{s-1}
             \leq Cp^{1/2} 
			  \Big|\!\log\Big(|2r|^{2}\Big)\Big|^{-p/2}
  \sum_{s=\delta'_p+1}^{\infty}\Big(\frac{1}{2r}\Big)^{s}
				  |z|^{s-1}    \\
&= \frac{C}{2r}C p^{1/2}  \Big|\!\log\Big(|2r|^{2}\Big)\Big|^{-p/2}                        
\Big(\frac{|z|}{2r}\Big)^{\delta'_{p}} \frac{1}{1-|z|/2r}                                      \\
&\leq  C 2^{- \alpha' p }	(\beta_p^{\D^*}(z) )^{1/2}.
\end{split}
\end{equation}
        %
 Obviously, 
         \begin{align}\label{eq:4.50a}
\sum_{m=1}^{\delta'_p} m(c_m^{(p)})^2|z|^{2m}
           \leq \delta'_p\sum_{m=1}^{\delta'_p} (c_m^{(p)})^2|z|^{2m}
   \leq   \delta'_p\beta_p^{\D^*}(z).
    \end{align}
 Thus, using these three estimates \eqref{eq:4.48a}--\eqref{eq:4.50a}
together with \eqref{eq:2.4a},  \eqref{eq:3.35a}, we see that
\eqref{e:S1} 
yields: 
  \begin{equation}   \label{e:estS1}
   |S_1| =  \mathcal{O}(p\cdot p^{3/2}\cdot p)
	2^{- \alpha' p}	\beta_p^{\D^*}(z)^{2} \quad
	\text{ for }  |z|\leq c'p^{-A'}.	
		\end{equation}
From \eqref{eq:4.46a},
\begin{align}\label{eq:4.52a}\begin{split}
   |S_2| \leq& \sum_{s=\delta'_p+1}^{\infty}\sum_{m=1}^{\delta'_p} 
 |s-m|(c_m^{(p)})^2(c_s^{(p)})^2 |z|^{2s+2m-1}                \\
 \leq& \bigg(\sum_{s=\delta'_p+1}^{\infty}s(c_s^{(p)})^2
 |z|^{2s-1}\bigg)
  \bigg(\sum_{m=1}^{\delta'_p} (c_m^{(p)})^2|z|^{2m}\bigg).
  \end{split}\end{align}
Note that by the argument in \eqref{eq:2.27a} for $\D^{*}$
  (or directly from  \eqref{eq:2.8a}, \eqref{eq:2.12a}),
  there exists $C>0$ such that for any 
  $s\in \N^*$, $p\geq 2$, we have
  \begin{align}\label{eq:4.54a}
|c_s^{(p)}|\leq 
Cp^{1/2}\big(2r\big)^{-s}|\!\log(|2r|^{2})|^{-p/2}.
 \end{align}
By \eqref{eq:3.35a}, \eqref{eq:4.35a}, \eqref{eq:4.43a} for $\tau=1$,  
and \eqref{eq:4.54a}, we get as in \eqref{eq:4.36a}
for all $p\gg 1$ and $|z|\leq c'p^{-A'}$, 
    \begin{multline}\label{eq:4.55a}
\bigg(\sum_{s=\delta'_p+1}^{\infty}s(c_s^{(p)})^2
 |z|^{2s-1}\bigg) \leq C\Big|\!\log\Big(|2r|^{2}\Big)\Big|^{-p}
   \frac{|z|}{4r^{2}}  p
\bigg(\sum_{s=\delta'_p+1}^{\infty} s \Big(\frac{|z|}{2 r}\Big)^{2s-2}
\bigg)                       \\
         \leq  Cp\, \Big|\!\log\Big(|2r|^{2}\Big)\Big|^{-p}|z|
               \frac{(\delta'_p+1)}{(1-(|z|/2r)^{2})^2}
   \Big(\frac{|z|}{2 r}\Big)^{2\delta'_p}  
   \leq Cp   2^{- 2\alpha' p}	\beta_p^{\D^*}(z). 
  \end{multline}
  By \eqref{eq:4.46a}, \eqref{e:estS1},
 \eqref{eq:4.52a} and \eqref{eq:4.55a} we obtain
  \begin{equation}\label{eq:4.60a}
| I(\mathcal{A}_p^2)(z) |
 = \mathcal{O}( p^{7/2}  2^{- \alpha' p})	\beta_p^{\D^*}(z)^{2}
 \quad \text{ on  } |z|\leq c'p^{-A'}.
  \end{equation}
 We finally deal with the sum over $\mathcal{A}_p^3$, 
using the same principles\footnote{
  Fine uniform control for small indices, 
  rough control via Cauchy formula for large indices, 
  sacrifice of a few powers of $|z|$ and restriction to $|z|\leq cp^{-A}$ 
  for resulting sums.}. 
 Write: 
   \begin{align}\label{eq:4.61a}\begin{split}
 I(\mathcal{A}_p^3)(z)
 =& \sum_{(q,s,m)\in \mathcal{A}_p^3}  (q-m)(c_m^{(p)})^2
\bigg(\sum_{\ell=1}^{d_p}a_{q\ell}^{(p)}\overline{a_{s\ell}^{(p)}}\bigg)
              z^{q+m-1}\bar{z}^{s+m}                 \\
         &- \sum_{s=1}^{\infty}\sum_{m=\delta'_p+1}^{\infty} 
(s-m)(c_m^{(p)})^2(c_s^{(p)})^2 z^{s+m-1}\bar{z}^{s+m}\\
        =&\!: S_1' - S_2'. 
 \end{split}  \end{align} 
 On the one hand, rather similarly as for \eqref{e:S1} 
 (observe the precise exponents though), 
  \begin{multline}   \label{e:S1'}
   |S_1'| \leq d_p 
               \sum_{q=1}^{\infty} q\Big(\sup_{1\leq\ell\leq d_p}
			   |a_{q\ell}^{(p)}|\Big)|z|^q
\sum_{s=1}^{\infty} 
			   \Big(\sup_{1\leq\ell\leq d_p}|a_{s\ell}^{(p)}|\Big)|z|^{s}
\sum_{m=\delta'_p+1}^{\infty}  m(c_m^{(p)})^2|z|^{2m-1}.
  \end{multline}
Again, we deal separately with $\sum_{s=1}^{\delta'_p}$ 
		and $\sum_{s=\delta'_p+1}^{+\infty}$
from \eqref{eq:4.29a}, \eqref{eq:4.49a}.
        %
 In conclusion, by \eqref{eq:2.4a}, \eqref{eq:4.29a},
 \eqref{eq:4.48a}, \eqref{eq:4.49a}, \eqref{eq:4.55a}
and \eqref{e:S1'}, 
we have on $|z|\leq c'p^{-A'}$,
  \begin{equation}\label{eq:4.64a} 
   |S'_1| \leq \mathcal{O}(p^{4} 2^{- 2 \alpha' p})	
   \beta_p^{\D^*}(z)^{2}.
  \end{equation}
 On the other hand,  we have by \eqref{eq:4.55a} on the set 
 $|z|\leq c'p^{-A'}$,
  \begin{align}\label{eq:4.65a} \begin{split}
   |S_2'| &\leq \sum_{q=1}^{\infty}\sum_{m=\delta'_p+1}^{\infty} 
  |q-m|(c_m^{(p)})^2(c_q^{(p)})^2 |z|^{2q+2m-1}   \\
&\leq \bigg(\sum_{q=1}^{\infty}q(c_q^{(p)})^2|z|^{2q}\bigg)
 \bigg(\sum_{m=\delta'_p+1}^{\infty}m(c_m^{(p)})^2|z|^{2m-1}\bigg)\\ 
 &\leq C \Big( \delta'_p \sum_{q=1}^{\delta'_{p}}(c_q^{(p)})^2|z|^{2q}
 +p   2^{- 2\alpha' p}	
   \beta_p^{\D^*}(z)\Big)p  2^{- 2\alpha' p}	
   \beta_p^{\D^*}(z)\\
   &\leq C  2^{- 2\alpha' p}	 p^{2}\,
   \beta_p^{\D^*}(z)^{2}.
 \end{split} \end{align}
By \eqref{eq:4.61a}, \eqref{eq:4.64a} and \eqref{eq:4.65a}
we have on the set $|z|\leq c'p^{-A'}$,
\begin{equation}\label{eq:4.66a}
   \bigg| I(\mathcal{A}_p^3)(z)   \bigg|
   = \mathcal{O}(p^{4}   2^{-2\alpha' p}  )\beta_p^{\D^*}(z)^{2}.
  \end{equation}
 \noindent
 \textit{Conclusion. --- }
  We sum up the estimates above (head terms \eqref{eq:4.13a}, 
  \eqref{eq:4.14a}, \eqref{eq:4.20a}, and tail terms \eqref{eq:4.44a},
  \eqref{eq:4.60a}, \eqref{eq:4.66a}) in \eqref{eq:4.25a}, 
  with $\kappa$ any fixed number larger than $\frac12 \log2$,
  and obtain for some $\gamma>0$,
   \begin{equation}\label{eq:4.91a}
    \sup_{|z|\leq c'p^{-A'}} 
       \bigg|\frac{d}{dz}\bigg(\frac{B_p}{B_p^{\D^*}}\bigg)(z)\bigg|
       = \mathcal{O}(e^{-\gamma p}). 
   \end{equation}
    Applying Theorem \ref{thm_MainThm} for $k=1,\delta=0$, we get
  \begin{equation}\label{eq:4.92a}
   \sup_{c'p^{-A'}\leq |z| \leq r}
     |z|\left|\log(|z|^2)\right|
     \left|\frac{d}{dz}(B_p-B_p^{\D^*})(z)\right|
     =\mathcal{O}(p^{-\infty}),
 \end{equation}	 
which can be rephrased as follows:
 \begin{equation}\label{eq:4.93a}
\sup_{c'p^{-A'}\leq |z| \leq r}
    \left|\frac{d}{dz}(B_p -B_p^{\D^*})(z)\right|
    = \mathcal{O}(p^{-\infty}). 
	 \end{equation}	 
Estimates \eqref{eq:2.12a}, \eqref{eq:4.91a}, \eqref{eq:4.93a} 
yield \eqref{e:diffquot} for $k=1$.

 Higher $k$-order estimates are established along the same lines: 
 (1) the sum over the set of indices in $\mathcal{A}^{j}_{p}$ 
 where one of indices satisfies $\geq \delta'_{p}+1$,
 will be controlled by a polynomial in $p$ times 
 $2^{-\alpha' p}\beta_p^{\D^*}(z)^{k}$;
 (2) to handle the sum over 
 the set of indices $\leq \delta'_{p}$, we observe 
 first that the contribution from the terms with sum of indices 
 $<2k+2$ is zero, so 
we will increase $\kappa$ to absorb the exponential factor
in the estimates. Thus the analogue of \eqref{eq:4.91a} holds
for $k>1$. 
We exemplify this for the second derivative $\frac{d^{2}}{dz^{2}}$ 
to show how the above argument 
works. From \eqref{eq:4.8a}, we get
  \begin{multline}\label{eq:4.95a}
 \frac{d^{2}}{dz^{2}}\frac{B_p}{B_p^{\D^*}} (z)
 =  (\beta_p^{\D^*}(z) )^{-3}     \\
\times\sum_{q,s,t,m=1}^{\infty} 
             (q-m)(q+m-1-2t)(c_m^{(p)})^2(c_t^{(p)})^2c_q^{(p)}c_s^{(p)}
       \epsilon_{qs}    
             z^{q+m-2+t} \bar{z}^{s+m+t}.
  \end{multline}
It is clear that the contribution of the indices with
$q+m+t< 5$ is zero, so the trick 
\eqref{eq:4.18a} works even in the presence of a $z^{-2}$-term 
in \eqref{eq:4.95a}.
   \hfill \cqfd
  
  \section{Applications} \label{eq:s4}
  
 %
Theorem \ref{thm:diffquot} can be interpreted in terms of 
Kodaira embeddings.
Following the seminal papers \cite{bou,Ca99,DLM06,Don,MM08,Ti90,Z98}
one of the main applications of the expansion of the Bergman kernel
is the convergence of the induced Fubini-Study metrics by Kodaira maps.
Let us consider the Kodaira map at level $p\geq2$ induced by 
$H^{0}_{(2)}(\Sigma, L^{p})$, which is a meromorphic map
defined by
\begin{equation}\label{e:kod1} 
\jmath_{p,(2)}:\Sigma\dashrightarrow 
\mathbb{P}(H^{0}_{(2)}(\Sigma, L^{p})^{*})
\cong\C\mathbb{P}^{d_p-1}\,,\:\: x\longmapsto 
\big\{\sigma\in H^{0}_{(2)}(\Sigma, L^{p}):\sigma(x)=0\big\}.
\end{equation}
Recall that by \cite[Remark 3.2]{bkp}
the sections of $H^0_{(2)}(\Sigma,L^p)$
extend to holomorphic sections of $L^p$ over $\overline\Sigma$
that vanish at the punctures and this gives an
identification 
\begin{equation}\label{e:bs22}
H^0_{(2)}(\Sigma,L^p)\cong\{\sigma\in H^0(\overline\Sigma,L^p): 
\sigma|_D=0\}.
\end{equation}
Let $\sigma_D$ be the canonical
section of the bundle $\mathscr{O}_{\overline{\Sigma}}(D)$.
The map 
\begin{equation}\label{e:h2h0}
H^0(\overline\Sigma,L^p\otimes\cO_{\overline{\Sigma}}(-D))
\to \{\sigma\in H^0(\overline\Sigma,L^p): 
\sigma|_D=0\},\quad s\mapsto s\otimes\sigma_D\,,
\end{equation}
is an isomorphism and we have an identification
$H^0(\overline\Sigma,L^p\otimes\cO_{\overline{\Sigma}}(-D))
\otimes\sigma_D\cong H^0_{(2)}(\Sigma,L^p)
\subset H^0(\overline\Sigma,L^p)$.
Since the zero divisor of $\sigma_D$ is $D$ we have for $x\in\Sigma$,
\begin{equation}\label{e:hyp}
\big\{\sigma\in H^{0}_{(2)}(\Sigma, L^{p}):\sigma(x)=0\big\}=
\big\{s\in H^0(\overline\Sigma,L^p\otimes\cO_{\overline{\Sigma}}(-D)):
s(x)=0\big\}\otimes\sigma_D.
\end{equation}
Let $\jmath_p$ the Kodaira map defined by 
$H^0(\overline\Sigma,L^p\otimes\cO_{\overline{\Sigma}}(-D))$.
We have by \eqref{e:hyp} the commutative diagram
\begin{equation}\label{e:comkod}
    \xymatrix@C=3pc  {
    {\,\,}\Sigma_{} 
    \ar[rr]^{\jmath_{p,(2)}\qquad\,\,}
    \ar@{^{(}->}[d]
    && \mathbb{P}(H^{0}_{(2)}(\Sigma, L^{p})^{*}) 
    \ar[d]^{\operatorname{Id}} \\
    \overline\Sigma \ar[rr]_{\jmath_{p}\qquad\,\,}
    && \mathbb{P}(H^0(\overline\Sigma,L^p\otimes
    \cO_{\overline{\Sigma}}(-D))^{*})
  }
    \end{equation}
It is well known that $\jmath_{p}$ is a holomorphic embedding for
$p$ large enough, namely for all $p$ satisfying $p\deg (L)-N>2g$
(see \cite[p.\ 215]{GH}). 
Thus $\jmath_{p,(2)}$ is also an embedding for $p$ large enough,
as the restriction of an embedding of $\overline\Sigma$. 

The $L^2$-metric \eqref{eq:2.1a} on $H^{0}_{(2)}(\Sigma, L^{p})$
induces a Fubini-Study K\"ahler metric 
$\omega_{{\rm FS},p}$ on the projective space
$\mathbb{P}(H^{0}_{(2)}(\Sigma, L^{p})^{*})$ and a Fubini-Study 
Hermitian metric $h_{{\rm FS},p}$ on the hyperplane 
line bundle $\cO(1)\to\mathbb{P}(H^{0}_{(2)}(\Sigma, L^{p})^{*})$.
By \cite[Theorem 5.1.6]{mm} $\jmath_{p}$ and $\jmath_{p,(2)}$
induce canonical isomorphisms 
\begin{equation}\label{e:cipk}
\jmath_{p}^*\cO(1)\simeq L^p\otimes\cO(-D)\,,\quad
\jmath_{p,(2)}^*\cO(1)\simeq L^p\big|_{\Sigma}\,. 
\end{equation}
Let $\jmath_{p,(2)}^{*}h_{{\rm FS},p}$ be the 
Hermitian metric induced by $h_{{\rm FS},p}$
via the isomorphism \eqref{e:cipk} on $L^p\big|_{\Sigma}$.

\begin{thm}   \label{thm:kodaira1}
Let $(\Sigma,\omega_{\Sigma}, L, h)$ fulfill conditions
$(\alpha)$ and $(\beta)$. Then as $p\to\infty$,
\begin{equation}\label{e:FS1}
\begin{split}
&\jmath_{p,(2)}^{*}h_{{\rm FS},p}=
\big(1+\mathcal{O}(p^{-\infty})\big)
(B_p^{\D^*})^{-1}h^p\,,\\[2pt]
&\frac1p\jmath_{p,(2)}^{*}\omega_{{\rm FS},p}=
\frac{1}{2\pi }\omega_{\Sigma}+\frac{i}{2\pi p}\partial\overline{\partial}
\log\big(B_p^{\D^*}\big)
+\mathcal{O}(p^{-\infty})\,,
\end{split}
\end{equation}
uniformly on $V_1\cup V_2\cup\ldots\cup V_N$.
\end{thm}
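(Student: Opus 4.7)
My plan is to exploit the classical Bergman-to-Fubini--Study dictionary and then swap $B_p$ for $B_p^{\D^*}$ using Theorems \ref{thm_apdx} and \ref{thm:diffquot}. The starting point is the pointwise identity
\[
\jmath_{p,(2)}^{*}h_{{\rm FS},p} = B_p^{-1}\, h^p \quad\text{on } \Sigma,
\]
which follows directly from writing the Kodaira map via an orthonormal basis $\{S^p_\ell\}$ of $H^0_{(2)}(\Sigma,L^p)$ in a local trivialization and comparing with the definition \eqref{e:BFS1} of $B_p$; this is the standard argument of \cite[Thm.~5.1.6]{mm}, applicable here thanks to \eqref{e:bs1} and \eqref{e:comkod}. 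Taking first Chern curvatures on both sides and using $c_1(L,h)=\tfrac{i}{2\pi}R^L=\tfrac{1}{2\pi}\omega_\Sigma$ on each $V_j$ (condition $(\beta)$), I obtain
\[
\frac{1}{p}\jmath_{p,(2)}^{*}\omega_{{\rm FS},p} = \frac{1}{2\pi}\omega_\Sigma + \frac{i}{2\pi p}\partial\overline{\partial}\log B_p \quad\text{on } V_1\cup\cdots\cup V_N.
\]

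With these two formulas in hand, the first line of \eqref{e:FS1} is immediate: setting $f_p := B_p/B_p^{\D^*}$, Theorem \ref{thm_apdx} gives $f_p = 1 + \mathcal{O}(p^{-\infty})$ uniformly on $V_1\cup\cdots\cup V_N$, whence $f_p^{-1} = 1 + \mathcal{O}(p^{-\infty})$ for $p$ large, and therefore
\[
\jmath_{p,(2)}^{*}h_{{\rm FS},p} = f_p^{-1}(B_p^{\D^*})^{-1}h^p = (1+\mathcal{O}(p^{-\infty}))\,(B_p^{\D^*})^{-1}h^p.
\]

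For the second line I would split $\log B_p = \log B_p^{\D^*} + \log f_p$ and reduce the claim to $\partial\overline{\partial}\log f_p = \mathcal{O}(p^{-\infty})$ on each $V_j$. Expanding
\[
\partial\overline{\partial}\log f_p \;=\; \frac{\partial\overline{\partial}f_p}{f_p} \;-\; \frac{\partial f_p\wedge \overline{\partial}f_p}{f_p^{2}},
\]
the denominators are uniformly bounded below for large $p$ by Theorem \ref{thm_apdx}, while Theorem \ref{thm:diffquot} (applied with $k=1$ and $k=2$) shows that $\partial f_p$, $\overline{\partial} f_p$ and $\partial\overline{\partial} f_p$ are each $\mathcal{O}(p^{-\infty})$ uniformly on $\overline{V_1}\cup\cdots\cup\overline{V_N}$. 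This yields $\partial\overline{\partial}\log f_p=\mathcal{O}(p^{-\infty})$, and the prefactor $i/(2\pi p)$ only strengthens the resulting estimate, giving the second line of \eqref{e:FS1}.

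The main obstacle I anticipate is not analytic depth but bookkeeping of norms. The derivative bounds in Theorem \ref{thm:diffquot} are stated for the flat frame $\partial/\partial z,\partial/\partial\bar z$, whereas the natural norm for comparing $(1,1)$-forms in the target identity is the one induced by $\omega_\Sigma=\omega_{\D^*}$, which rescales a coefficient of $\tfrac{i}{2}dz\wedge d\bar z$ by the factor $|z|^{2}|\!\log(|z|^{2})|^{2}$ that \emph{vanishes} at the puncture. Consequently the flat-coefficient bound from Theorem \ref{thm:diffquot} actually implies the $\omega_\Sigma$-pointwise bound without loss, and no extra work is required. Verifying this compatibility, and confirming that the standard Kodaira/Fubini--Study curvature identity extends verbatim from the compact polarization $(\overline\Sigma, L\otimes\cO_{\overline\Sigma}(-D))$ to the present $\bL^2$-setting via diagram \eqref{e:comkod}, are the only items to check carefully before assembling the four displays above into the full statement.
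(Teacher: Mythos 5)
Your proposal is correct and follows exactly the paper's argument: the paper cites \cite[Theorem 5.1.6]{mm} for the identities $\jmath_{p,(2)}^{*}h_{{\rm FS},p}=(B_p)^{-1}h^p$ and $\frac1p\jmath_{p,(2)}^{*}\omega_{{\rm FS},p}=\frac{i}{2\pi}R^L+\frac{i}{2\pi p}\partial\overline{\partial}\log B_p$, and then asserts that \eqref{e:FS1} follows from Theorems \ref{thm_apdx} and \ref{thm:diffquot}. Your write-up simply spells out the substitution $B_p=f_pB_p^{\D^*}$, the expansion of $\partial\overline{\partial}\log f_p$, and the norm compatibility that the paper leaves implicit.
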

\begin{proof}
We have indeed by \cite[Theorem 5.1.6]{mm},
\begin{equation}\label{e:FS2}
\jmath_{p,(2)}^{*}h_{{\rm FS},p}=
(B_p)^{-1}h^p\,,\quad
\frac1p\jmath_{p,(2)}^{*}\omega_{{\rm FS},p}=
\frac{i}{2\pi } R^{L}+\frac{i}{2\pi p}\partial\overline{\partial}
\log(B_p)\,,
\end{equation}
 so \eqref{e:FS1} follows from
Theorems \ref{thm_apdx} and \ref{thm:diffquot}.
\end{proof}

We compare next the induced Fubini-Study
metrics by $\jmath_{p,(2)}$ on $\Sigma$
and on $\D^*$, and show that they 
differ from each other (modulo the usual identification on
$\D^*_{4r}$ in \eqref{eq:1.6a}
with the neighbourhood of a singularity of $\Sigma$) 
by a sequence of $(1,1)$-forms 
\textit{which is $\mathcal{O}(p^{-\infty})$ (at every order) 
with respect to any smooth reference metric on $\D_r$}: 
the situation is just as good as in the smooth setting. 

The infinite dimensional projective space $\C\mathbb{P}^{\infty}$
is a Hilbert manifold modeled on the space $\ell^2$ of
square-summable sequences of complex numbers $(a_j)_{j\in\N}$
endowed with the norm $\|(a_j)\|=
\big(\sum_{j\geq0}|a_j|^2\big)^{1/2}$.
Then $\C\mathbb{P}^{\infty}=\ell^2\setminus\{0\}/\C^*$
and for $a\in\ell^2$ we denote by $[a]$ its class in 
$\C\mathbb{P}^{\infty}$. 
The affine charts are defined as usual by $U_j=\{[a]:a_j\neq0\}$.
The Fubini-Study metric $\omega_{{\rm FS},\infty}$ is defined
by $\omega_{{\rm FS},\infty}=
\frac{i}{2\pi}\partial\overline\partial\log\|a\|^2$ to the effect that
for a holomorphic map $F:M\to\C\mathbb{P}^{\infty}$ from a
complex manifold $M$ to $\C\mathbb{P}^{\infty}$ we have
$F^*\omega_{{\rm FS},\infty}=
\frac{i}{2\pi}\partial\overline\partial\log\|F\|^2$.
We define the Kodaira map of level $p$ associated with 
$(\D^*, \omega_{\D^*}, \C, h_{\D^*})$ by
using the orthonormal basis \eqref{eq:2.7a} of $H_{(2)}^p(\D^*)$,
\begin{equation}\label{e:kod2} 
\imath_p:\D^* \to\C\mathbb{P}^{\infty}\,,\quad
\imath_p(z)=[c_1^{(p)}z,c_2^{(p)}z^2,\ldots,
c_\ell^{(p)}z^\ell,\ldots]\in\C\mathbb{P}^{\infty},\:\:
z\in\D^*.
\end{equation}
\begin{thm}   \label{thm:kodaira2}
Suppose that $\D^*_{4r}$ 
and $V\subset \Sigma$ 
are identified as in \eqref{eq:1.6a}.
On $\D^*_{4r}$ we set 
\begin{equation}\label{e:etap} 
\imath_p^*\omega_{{\rm FS},\infty} 
- \jmath_{p,(2)}^*\omega_{{\rm FS},p} = \eta_p\,idz\wedge d\bar{z}.
\end{equation}
Then $\eta_p$ extends smoothly to $\D_r$ and
one has, for all $k\geq 0$,  $\ell\geq 0$,
\begin{equation}
 \|\eta_p\|_{C^{k}(\D_r)} \leq C_{k,\ell}\, p^{-\ell}\,,
\qquad\text{as }p\to\infty,
 \end{equation}
 where $ \|\cdot\|_{C^{k}(\D_r)}$ is the  usual $C^{k}$-norm
 on $\D_{r}$.
\end{thm}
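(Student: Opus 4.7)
The plan is to compute both pullbacks $\imath_p^{*}\omega_{{\rm FS},\infty}$ and $\jmath_{p,(2)}^{*}\omega_{{\rm FS},p}$ explicitly in terms of $B_p^{\D^*}$ and $B_p$, observe that a common $\frac{p}{2\pi}\omega_{\D^*}$ term cancels out, and then read off the conclusion from Theorems \ref{thm_apdx} and \ref{thm:diffquot} applied to $\log(B_p/B_p^{\D^*})$.

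First I would use \eqref{e:kod2} and \eqref{eq:2.7a} to write $\|\imath_p(z)\|^2 = \sum_{\ell\geq 1}(c_\ell^{(p)})^2|z|^{2\ell}$, so that
\begin{equation*}
\imath_p^{*}\omega_{{\rm FS},\infty} = \frac{i}{2\pi}\partial\overline{\partial}\log\sum_{\ell\geq 1}(c_\ell^{(p)})^2|z|^{2\ell}.
\end{equation*}
By \eqref{eq:2.8a}, this sum equals $B_p^{\D^*}(z)\,|\log(|z|^2)|^{-p}$, and a direct computation on $\D^{*}_{4r}$ yields $-i\partial\overline{\partial}\log|\log(|z|^2)| = \omega_{\D^*}$, so
\begin{equation*}
\imath_p^{*}\omega_{{\rm FS},\infty} \;=\; \frac{p}{2\pi}\,\omega_{\D^*} \;+\; \frac{i}{2\pi}\,\partial\overline{\partial}\log B_p^{\D^*}\qquad\text{on }\D^{*}_{4r}.
\end{equation*}
On the other hand, under the identification \eqref{eq:1.6a} we have $\omega_\Sigma=\omega_{\D^*}$ and $iR^L=\omega_{\D^*}$ on $V$, so \eqref{e:FS2} reads
\begin{equation*}
\jmath_{p,(2)}^{*}\omega_{{\rm FS},p} \;=\; \frac{p}{2\pi}\,\omega_{\D^*} \;+\; \frac{i}{2\pi}\,\partial\overline{\partial}\log B_p\qquad\text{on }\D^{*}_{4r}.
\end{equation*}
Subtracting, the $\frac{p}{2\pi}\omega_{\D^*}$ terms cancel exactly, giving the clean identity
\begin{equation*}
\imath_p^{*}\omega_{{\rm FS},\infty} - \jmath_{p,(2)}^{*}\omega_{{\rm FS},p} \;=\; -\,\frac{i}{2\pi}\,\partial\overline{\partial}\,\log\frac{B_p}{B_p^{\D^*}},
\end{equation*}
so that $\eta_p = -\frac{1}{2\pi}\,\frac{\partial^2}{\partial z\,\partial\bar z}\log\big(B_p/B_p^{\D^*}\big)$ on $\D^{*}_{4r}$.

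It remains to bound $\eta_p$ in $C^k(\D_r)$. Recall from the paragraph preceding Theorem \ref{thm:diffquot} that $B_p/B_p^{\D^*}$ extends smoothly to $\D_{4r}$ for each $p\geq 2$. Theorem \ref{thm_apdx} gives $B_p/B_p^{\D^*}-1 = \mathcal{O}(p^{-\infty})$ uniformly on the punctured disc, hence by continuity on the closed $\overline{V}$ as well; in particular for $p\gg 1$ the quotient stays bounded away from $0$, so $\log(B_p/B_p^{\D^*})$ is smooth on $\D_r$. Applying the Fa\`a di Bruno / chain-and-Leibniz rules expresses every partial $\partial^{\alpha}\log(B_p/B_p^{\D^*})$ as a polynomial in $(B_p/B_p^{\D^*})^{-1}$ and in the partials $\partial^{\beta}(B_p/B_p^{\D^*})$ with $1\leq|\beta|\leq|\alpha|$; the inverse factors remain uniformly bounded while each derivative factor is $\mathcal{O}(p^{-\infty})$ by Theorem \ref{thm:diffquot}. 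Taking $\alpha$ of the appropriate length then gives $\|\eta_p\|_{C^{k}(\D_r)} = \mathcal{O}(p^{-\infty})$, as desired.

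The computational heart is painless once one sees the cancellation of the $\frac{p}{2\pi}\omega_{\D^*}$ term, so the only real delicacy I anticipate is making sure that the $C^{0}$-estimate for $B_p/B_p^{\D^*}-1$ passes through to the puncture $z=0$ and that $\log(B_p/B_p^{\D^*})$ is genuinely smooth across $0$ on $\D_r$; both points are immediate consequences of the smooth extension recorded before Theorem \ref{thm:diffquot} combined with Theorem \ref{thm_apdx} and continuity.
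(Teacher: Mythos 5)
Your proof is correct and follows exactly the paper's route: you compute $\imath_p^{*}\omega_{{\rm FS},\infty}=\frac{p}{2\pi}\omega_{\D^*}+\frac{i}{2\pi}\partial\overline{\partial}\log B_p^{\D^*}$ (the paper's \eqref{e:FS3}), pair it with \eqref{e:FS2} for $\jmath_{p,(2)}$, cancel the common $\frac{p}{2\pi}\omega_{\D^*}$ to get $\eta_p\,idz\wedge d\bar z=\frac{i}{2\pi}\partial\overline{\partial}\log(B_p^{\D^*}/B_p)$ (the paper's \eqref{e:FS4}), and then invoke Theorems~\ref{thm_apdx} and~\ref{thm:diffquot}. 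You actually spell out more than the paper does (the derivation of \eqref{e:FS3}, and the Fa\`a di Bruno argument that converts the $C^k$-bounds on $B_p/B_p^{\D^*}-1$ into $C^k$-bounds on $\log(B_p/B_p^{\D^*})$ and hence on $\eta_p$, plus the continuity-across-the-puncture remark); these are exactly the points the paper leaves implicit.
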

\begin{proof}
We first observe that $\imath_p$ is an embedding,
since already $z\mapsto[c_1^{(p)}z,c_2^{(p)}z^2]\in\C\mathbb{P}^{1}$
is an embedding.
We have 
\begin{equation}\label{e:FS3}
\frac{p}{2\pi }\, \omega_{\D^*} = \imath_p^*\omega_{{\rm FS},\infty}
- \frac{i}{2\pi}\partial\overline{\partial}\log\big(B_p^{\D^*}\big),
\end{equation}
and consequently on $\D_{r}^{*}$, 
\begin{equation}\label{e:FS4}
\imath_p^*\omega_{{\rm FS},\infty}-
\jmath_{p,(2)}^*\omega_{{\rm FS},p}=
\frac{i}{2\pi}\partial\overline{\partial}\log\big(B_p^{\D^*}/B_p\big)\,,
\end{equation} 
so the assertion follows from Theorem \ref{thm:diffquot}.
\end{proof}
We finish with an application to random K\"ahler geometry, 
more precisely to the distribution of zeros
of random holomorphic sections \cite{CM11,DS06}.

Let us endow the space $H^{0}_{(2)}(\Sigma, L^{p})$
with a Gaussian probability measure $\mu_p$
induced by the unitary map $H^{0}_{(2)}(\Sigma, L^{p})\cong\C^{d_p}$
given by the choice of an orthonormal basis $(S_j^p)_{j=1}^{d_p}$.
Given a section $s\in H^{0}_{(2)}(\Sigma, L^{p})\subset 
H^{0}(\overline\Sigma, L^{p})$ we denote by $[s=0]$
the zero distribution on $\overline\Sigma$ defined by the zero divisor 
of $s$ on $\overline\Sigma$.
If the zero divisor of $s$ is given by $\sum m_jP_j$, where $m_j\in\N$ and
$P_j\in\overline\Sigma$, then $[s=0]=\sum m_j\delta_{P_j}$,
where $\delta_{P}$ is the delta distribution at $P\in\overline\Sigma$.
We denote by $\langle\LargerCdot,\LargerCdot\rangle$ 
the duality between distributions and test functions.
For a test function $\Phi\in C^\infty(\overline\Sigma)$ and $s$ as above
we have $\langle[s=0],\Phi\rangle=\sum m_j\Phi(P_j)$.

The expectation distribution $\E[s_p=0]$ of the distribution-valued 
random variable 
$H^{0}_{(2)}(\Sigma, L^{p})\ni s_p\mapsto[s_p=0]$ 
is defined by
\begin{equation}\label{e:FS7}
\big\langle \E[s_p=0],\Phi\big\rangle=
\int\limits_{H^{0}_{(2)}(\Sigma, L^{p})}\big\langle[s_p=0],
\Phi\big\rangle\,d\mu_p(s_p),
\end{equation} 
where $\Phi$ is a test function on $\overline\Sigma$. 
We consider the product probability space
\[(\mathcal{H},\mu)=
\left(\prod_{p=1}^\infty H^{0}_{(2)}(\Sigma, L^{p}),
\prod_{p=1}^\infty\mu_p\right). 
\]
\begin{thm}   \label{thm:equi}
(i) The smooth $(1,1)$-form 
$\jmath_{p,(2)}^{*}\omega_{{\rm FS},p}$ extends to
a closed positive $(1,1)$-current on $\overline\Sigma$ denoted
$\gamma_p$ (called Fubini-Study current) and we have
$\E[s_p=0]=\gamma_p$\,.
\\[2pt]
(ii) We have $\frac1p\gamma_p\to \frac{i}{2\pi}R^L$ as $p\to\infty$, weakly
in the sense of currents on $\overline\Sigma$, where $R^L$
is the curvature current of the singular holomorphic bundle $(L,h)$
on $\overline\Sigma$.
\\[2pt]
(iii) For almost all sequences $(s_p)\in(\mathcal{H},\mu)$ we have
$\frac1p[s_p=0]\to \frac{i}{2\pi}R^L$ as $p\to\infty$, weakly
in the sense of currents on $\overline\Sigma$.
\end{thm}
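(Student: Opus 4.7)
The plan is to prove parts (i)--(iii) in order, working throughout with an auxiliary smooth Hermitian metric $\tilde h$ on $L\to\overline\Sigma$ (with smooth curvature form $\tilde\omega=\frac{i}{2\pi}R^{\tilde h}$) and the associated kernel function $\tilde B_p(x):=\sum_{j=1}^{d_p}|S_j^p(x)|^2_{\tilde h^p}$. By \eqref{e:bs22}, every $S_j^p$ vanishes at $D$, so $\tilde B_p$ is smooth on $\overline\Sigma$ and vanishes exactly along $D$, and in particular $\log\tilde B_p\in L^1_{loc}(\overline\Sigma)$. For part (i), I apply the Lelong--Poincar\'e formula on $\overline\Sigma$ to a realisation $s_p=\sum_j a_jS_j^p$, giving the current identity $[s_p=0]=p\tilde\omega+\frac{i}{2\pi}\partial\overline\partial\log|s_p|^2_{\tilde h^p}$. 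Taking expectation (Fubini commutes $\E$ with $\partial\overline\partial$ by the uniform $L^1_{loc}$ integrability) and using the classical Gaussian identity $\E\log|s_p(x)|^2_{\tilde h^p}=\log\tilde B_p(x)-\gamma_{\mathrm{EM}}$ yields $\E[s_p=0]=p\tilde\omega+\frac{i}{2\pi}\partial\overline\partial\log\tilde B_p$, a closed positive current on $\overline\Sigma$ independent of $\tilde h$. Its restriction to $\Sigma$ coincides with $\jmath_{p,(2)}^*\omega_{\mathrm{FS},p}$ by the computation producing \eqref{e:FS2} (applied to $\tilde h$ in place of $h$), so it is precisely the claimed extension $\gamma_p$.

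For part (ii), I write $\frac{1}{p}\gamma_p=\frac{i}{2\pi}R^L+\frac{1}{p}\cdot\frac{i}{2\pi}\partial\overline\partial\log B_p$ as currents on $\overline\Sigma$ using \eqref{e:FS2}, and reduce the weak convergence to $\frac{1}{p}\log B_p\to 0$ in $L^1_{loc}(\overline\Sigma)$. On any compact subset of $\Sigma$, the Tian--Zelditch expansion \eqref{eq:2.6a} gives $\frac{1}{p}\log B_p=O(\log p/p)$ uniformly. On each $V_j$, Theorem \ref{thm_apdx} reduces the problem to $\frac{1}{p}\log B_p^{\D^*}\to 0$ in $L^1_{loc}$, which I handle by splitting according to \eqref{eq:2.12a}: on $\{be^{-p^\gamma}\leq|z|<1\}$ we have $\frac{1}{p}\log B_p^{\D^*}=O(\log p/p)$ uniformly, while on the complementary exponentially thin disc $\{|z|<be^{-p^\gamma}\}$ the pointwise bound $|\frac{1}{p}\log B_p^{\D^*}|=O(\log|\log(|z|^2)|)=O(p^\gamma)$ combined with the Lebesgue area $O(e^{-2p^\gamma})$ yields an $L^1$ contribution of order $p^\gamma e^{-2p^\gamma}\to 0$. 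Hence $\frac{1}{p}\partial\overline\partial\log B_p\to 0$ weakly, giving (ii).

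For part (iii), I follow the Shiffman--Zelditch strategy. Writing $|s_p(x)|^2_{\tilde h^p}=\tilde B_p(x)|u_p(x)|^2$, the field $u_p(x)$ is a standard complex Gaussian at each $x$ with $\tilde B_p(x)>0$, and $\frac{1}{p}[s_p=0]-\E\frac{1}{p}[s_p=0]=\frac{1}{p}\cdot\frac{i}{2\pi}\partial\overline\partial\log|u_p|^2$. For fixed $\phi\in C^\infty(\overline\Sigma)$, integrating by parts gives $X_p(\phi):=\langle\frac{1}{p}[s_p=0]-\E\frac{1}{p}[s_p=0],\phi\rangle=\frac{1}{p}\int\frac{i}{2\pi}(\partial\overline\partial\phi)\log|u_p|^2$. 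I estimate $\mathrm{Var}(X_p(\phi))$ through $\mathrm{Cov}(\log|u_p(x)|^2,\log|u_p(y)|^2)$, which for a pair of jointly complex Gaussians is controlled by the squared normalised two-point Bergman correlation $|B_p(x,y)|^2/(B_p(x)B_p(y))$ and vanishes at zero correlation; the rapid off-diagonal decay of $B_p(x,y)$ from \cite[Theorem 1.1]{bkp} then yields $\mathrm{Var}(X_p(\phi))=O(p^{-3+\varepsilon})$ for every $\varepsilon>0$, in particular summable in $p$. Chebyshev and Borel--Cantelli give $X_p(\phi)\to 0$ almost surely, and combining with (ii) plus a standard density argument over a countable dense family of test functions yields the almost-sure weak convergence claimed in (iii). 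The principal difficulties will be the $L^1_{loc}$ estimate in part (ii), where one must carefully balance the coarse pointwise bound of $\frac{1}{p}\log B_p^{\D^*}$ against the exponentially small area of the ``bad'' region, and the variance bound in (iii), which rests on the uniform off-diagonal Bergman kernel decay established in \cite[Theorem 1.1]{bkp}.
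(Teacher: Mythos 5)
Your proposal takes a genuinely different route from the paper, whose proof is a one-paragraph citation: the authors check that conditions (A)--(C) of \cite[Theorems 1.1, 4.3]{CM11} hold (they follow from $(\alpha)$, $(\beta)$ together with the local uniform convergence $\frac1p\log B_p\to0$ on $\Sigma$ furnished by \cite[Theorem 6.1.1]{mm}), and both the current extension in (i) and the almost-sure equidistribution in (iii) are then inherited from those general theorems, which run via the Dinh--Sibony method \cite{DS06} and do \emph{not} use a variance estimate. Your explicit derivation of (i) via Lelong--Poincar\'e plus the Gaussian identity, and the reduction of (ii) to $\frac1p\log B_p\to0$ in $L^1_{\rm loc}(\overline\Sigma)$, are sound, and the latter is a nice hands-on substitute for the citation.

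Two issues. In (ii), the asserted pointwise bound $\big|\tfrac1p\log B_p^{\D^*}\big|=O\big(\log|\log(|z|^2)|\big)$ on $\{|z|<be^{-p^\gamma}\}$ is not uniform in $z$: from $B_p^{\D^*}(z)\geq(c_1^{(p)})^2|z|^2|\log(|z|^2)|^p$ one gets
$\tfrac1p\log B_p^{\D^*}(z)\approx\log|\log(|z|^2)|+\tfrac2p\log|z|-\log p$,
and the term $\tfrac2p\log|z|$ is unbounded as $z\to0$. The $L^1$ conclusion still holds because $\int_0^\epsilon\log(1/\rho)\,\rho\,d\rho<\infty$, but the bound should be corrected (keeping all three terms) before the area estimate is applied. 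The genuine gap is in (iii). You assert $\mathrm{Var}(X_p(\phi))=O(p^{-3+\varepsilon})$ and attribute it to the ``off-diagonal decay of $B_p(x,y)$ from \cite[Theorem 1.1]{bkp}''; but that theorem compares $B_p$ to $B_p^{\D^*}$ near the cusps and does not deliver the \emph{near-diagonal} Gaussian decay of the normalised two-point kernel $|B_p(x,y)|^2/\big(B_p(x)B_p(y)\big)$, uniform up to the punctures, that a Shiffman--Zelditch variance computation actually requires. No such near-diagonal estimate is supplied, and even a weaker bound like $O(p^{-2})$ (enough for Borel--Cantelli) is not established, so the almost-sure convergence is not justified as written. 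The paper's invocation of \cite{CM11} is designed exactly to sidestep this: the Dinh--Sibony argument there needs only the local uniform convergence of $\frac1p\log B_p$ plus mild integrability. If you want to retain the probabilistic route, you must actually prove a summable variance bound, which demands near-diagonal two-point control beyond what your sketch provides.
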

\begin{proof}
The convergence
of the Fubini-Study currents $\gamma_p$ follows 
from \eqref{e:FS1}.
The rest of the assertions follow from the general arguments of 
\cite[Theorems 1.1, 4.3]{CM11}.
The conditions (A)-(C) in \cite[Theorems 1.1, 4.3]{CM11}
are implied by our hypotheses $(\alpha)$, $(\beta)$
and the required local uniform convergence $\frac1p\log B_p\to0$
as $p\to\infty$
on $\Sigma$ is a consequence of \cite[Theorem 6.1.1]{mm}. 
\end{proof}
\begin{small}

\renewcommand{\refname}{\normalsize \centering \sc References}

\end{small}


\begin{thebibliography}{22}
  
\bibitem{AbUll95} A.\ Abbes and E.\ Ullmo, 
\textit{Comparaison des m\'etriques d'Arakelov et de Poincar\'e 
sur $X_0(N)$}, 
Duke Math.\ J. \textbf{80} (1995), 295--307.
         %
         \bibitem{auv} H.\ Auvray, 
\textit{The space of Poincar\'{e} type K\"ahler metrics}, 
J.\ Reine Angew.\ Math.\ \textbf{722} (2017), 1--64.
         %
         \bibitem{auv2} H.\ Auvray, 
\textit{Asymptotic properties of extremal K\"ahler metrics of 
Poincar\'e type}, 
Proc.\ Lond. \  Math.\ Soc.\ (3) \textbf{115} (2017), no. 4, 813--853.
         %
\bibitem{bkp0} H.\ Auvray, X. \ Ma and G. \ Marinescu, 
 \textit{Bergman kernels on punctured Riemann surfaces}, 
 C.\ R.\ Math.\ Acad.\ Sci.\ Paris \textbf{354} (2016), no.\ 10, 1018--1022.

\bibitem{bkp} H.\ Auvray, X. \ Ma and G. \ Marinescu, 
 \textit{Bergman kernels on punctured Riemann surfaces}, 
Math. Ann., to appear; 
\href{https://link.springer.com/article/10.1007%2Fs00208-020-01957-y}
{DOI: 10.1007/s00208-020-01957-y}.


%
\bibitem{bf} R.\ Berman and G.\  Freixas i Montplet, 
 \textit{ An arithmetic Hilbert-Samuel theorem for singular 
Hermitian line bundles and cusp forms},  
 Compos. Math. \textbf{150} (2014), no. 10, 1703--1728.
         %
         %
 \bibitem{BL} J.-M. Bismut and G.~Lebeau, 
 \emph{Complex immersions and {Q}uillen metrics},
  Inst. Hautes {\'E}tudes Sci. Publ. Math. (1991), 
  no.~74, ii+298 pp. (1992).

         %
\bibitem{bou} T.\ Bouche, 
\textit{Convergence de la m\'etrique de {F}ubini-{S}tudy 
d'un fibr\'e lin\'eaire positif}, 
Ann. Inst. Fourier (Grenoble), \textbf{40} (1990), no.\ 1, 117-130.
           %
\bibitem{BBK07}
J. H. Bruinier, J. I. Burgos Gil and U. K\"{u}hn, 
\textit{Borcherds products and arithmetic intersection
theory on Hilbert modular surfaces}, 
Duke Math. J. \textbf{139} (2007), 1--88.

\bibitem{BKK05} J. I. Burgos Gil, J. Kramer and U. K\"{u}hn, 
\textit{Arithmetic characteristic classes of automorphic
vector bundles}, Doc. Math. \textbf{10} (2005), 619--716.	  
	  
\bibitem{Ca99} D. Catlin, 
\textit{The Bergman kernel and a theorem of Tian}, in {\em Analysis 
and geometry in several complex variables (Katata, 1997)}, 
1--23, Trends Math., Birkh\"auser, Boston, 1999.
           %
\bibitem{CM11} D.\ Coman and G.\ Marinescu, 
\textit{Equidistribution results for singular metrics on line bundles},
Ann.\ Sci.\ \'Ec.\ Norm.\ Sup\'er.\ (4), \textbf{48} (2015), 
no.\ 3, 497--536.                  
           %
          \bibitem{CM15} D.\ Coman and G.\ Marinescu, 
 \textit{On the first order asymptotics of partial Bergman kernels},
Ann.\ Fac.\ Sci.\ \'Ec.\ Toulouse Math.\ (6), \textbf{26} (2017), 
no.\ 5, 1193--1210.                  
          %
\bibitem{DLM06} X.~Dai, K.~Liu, and X.~Ma, 
\emph{On the asymptotic expansion of {B}ergman kernel}, 
J.~Differential Geom.\ \textbf{72} (2006), no.~1, 1--41.
%
 \bibitem{DLM12} X.~Dai, K.~Liu, and X.~Ma, 
 \textit{A remark on weighted {B}ergman kernels on
 orbifolds}, Math.\ Res.\ Lett.\ \textbf{19} (2012), no.~1, 143--148.

\bibitem{DS06} T.-C. Dinh and N. Sibony, 
\textit{Distribution des valeurs de transformations m\'eromorphes 
et applications}, Comment.\ Math.\ Helv.\ {\bf 81} (2006), 221--258.

           %
           
\bibitem{Don} 
S.\ K.\ Donaldson, 
\emph{Scalar curvature and projective embeddings. I}, 
J.\ Differential Geom.\ \textbf{59} (2001), no.\ 3, 479--522.
           
%
%
\bibitem{fjk} J.\ S.\ Friedman, J.\ Jorgenson and J.\ Kramer, 
  \textit{Uniform sup-norm bounds on average for cusp forms 
  of higher weights},  Arbeitstagung Bonn 2013, 127--154,
Progr. Math., 319, Birkhäuser/Springer, 2016.
%
 
  
\bibitem{GH}
P.~Griffiths and J.~Harris, \emph{Principles of algebraic geometry},
  Wiley-Interscience [John Wiley \&\ Sons], New York, 1978, 
  Pure and Applied Mathematics.

\bibitem{Hs10} C.-Y.\ Hsiao,
\emph{Projections in several complex variables}, 
M\'em.\ Soc.\ Math.\ Fr. (N.S.) No.\ 123 (2010), 131 pp.
  

\bibitem{jk04} J.\ Jorgenson and J.\ Kramer, 
  \textit{Bounding the sup-norm of automorphic forms},
Geom.\ Funct.\ Anal.\ \textbf{14} (2004), 1267--1277.                  
         %



\bibitem{mm} X.\ Ma and G.\ Marinescu, 
\textit{Holomorphic Morse inequalities and Bergman kernels},  
Progress in Mathematics, 254. Birkh\"auser Verlag, Basel, 2007. 
         %
\bibitem{MM08}
X.\ Ma and G.\ Marinescu, 
\textit{{Generalized Bergman kernels on symplectic manifolds}}, 
Adv.\ in Math.\ \textbf{217} (2008), no.~4, 1756--1815.
%
\bibitem{S}   
J.\ Sun,
\textit{Estimations of the Bergman kernel of the punctured disc},
arXiv:1706.01018.
%
\bibitem{SS}   
J.\ Sun and S.\ Sun,
\textit{Projective embedding of log Riemann surfaces and K-stability},
arXiv:1605.01089.
%
\bibitem{sze} G.\ Sz\'ekelyhidi, 
 \textit{Extremal metrics and K-stability} (PhD thesis), 
 arXiv:0611002. 
         %
         \bibitem{Ti90} G. Tian, 
\textit{On a set of polarized K\"ahler metrics on algebraic manifolds}, 
J.\ Differential Geom.\  {\bf 32} (1990), 99--130.

\bibitem{Z98} S. Zelditch, 
 \textit{Szeg\H{o} kernels and a theorem of Tian}, 
  Internat.\ Math.\ Res.\ Notices {\bf 1998}, no.\ 6, 317--331.

\end{thebibliography}
\end{document}